\documentclass[11pt,a4paper]{article}
\usepackage[utf8]{inputenc}
\usepackage{authblk}
\title{\large\textbf{Local exact controllability to constant trajectories for Navier-Stokes-Korteweg model}\footnote{The author is partially supported by the Project TRECOS ANR-20-CE40-0009 funded by the ANR.}}
\author{\small Adrien \textsc{Tendani Soler}\footnote{\textit{E-mail}: \texttt{adrien.tendani-soler@math.u-bordeaux.fr}}}
\affil{\scriptsize Institut de Mathématiques de Bordeaux UMR 5251,\\ Université de Bordeaux, Bordeaux INP, CNRS\\
F-33405 Talence, France}
\usepackage[nottoc]{tocbibind}
\usepackage[T1]{fontenc}
\usepackage{amsmath}
\usepackage{minitoc}
\usepackage{graphicx}
\usepackage{amsthm}
\usepackage{amsfonts}
\usepackage{amssymb}
\usepackage{xcolor}
\usepackage{setspace}
\setstretch{1}
\usepackage{varioref}
\usepackage{color}
\usepackage{makeidx}
\usepackage
{geometry}
\usepackage{varioref}
\usepackage{fullpage}
\usepackage{hyperref}
\usepackage{cleveref}
\hypersetup{
    colorlinks=true,
    linkcolor=blue,
    filecolor=magenta,     
    urlcolor=cyan,
    pdfpagemode=FullScreen,
    }
\usepackage{appendix}
\newtheorem{theo}{Theorem}[section]
\newtheorem{rem}[theo]{Remark}

\newtheorem*{notation}{Notation}
\newtheorem{lem}[theo]{Lemma}

\newtheorem{coro}[theo]{Corollary}

\numberwithin{equation}{section}

\newcommand{\R}{\mathbb{R}}
\newcommand{\Z}{\mathbb{Z}}

\newcommand{\vphi}{\varphi}

\newcommand{\T}{\mathbb{T}}

\newcommand{\enstq}[2]{\left\{#1~\middle|~#2\right\}}

\DeclareMathOperator{\divergence}{\mathop{}div}
\newcommand*\Laplace{\mathop{}\!\mathbin\bigtriangleup}

\usepackage{abstract}

\newcommand{\revdots}{\mathinner
{ \mkern1mu\raise1pt\vbox{\kern7pt\hbox{.}} \mkern2mu\raise4pt\hbox{.} \mkern2mu\raise7pt\hbox{.}\mkern1mu}
}

\newcommand{\croixdots}{ \mathinner{
\mkern1mu\raise7pt\vbox{\kern7pt 
\hbox{.}}
\mkern-5mu
\raise7pt\vbox{\hbox{.}}
\mkern1mu }
}

\renewcommand{\leq}{\leqslant}
\renewcommand{\geq}{\geqslant}

\renewcommand{\abstitlestyle}[1]{\noindent}
\setlength{\absleftindent}{0pt}
\setlength{\absrightindent}{0pt}
\setlength{\absparindent}{0pt}
\setlength{\abstitleskip}{-13pt}


\date{\today} 

\begin{document}
\newgeometry{twoside, inner=2cm,outer=2cm,top=2.23cm,bottom=3.1cm}

{\setlength{\baselineskip}{0.1\baselineskip}
\maketitle}

\begin{abstract}
In this article, we study the local exact controllability to a constant trajectory for a compressible Navier-Stokes-Korteweg system on the torus in dimension $ d\in\{1,2,3\}$ when the control acts on an open subset. To be more precise, we obtain the local exact controllability to the constant state $(\rho_{\star},0)$ for arbitrary small positive times and without any geometric condition on the control region. In order to do so, we analyze the control properties of the linearized equation, and present a detailed study of the observability of the adjoint equations. In particular, we shall exhibit the parabolic (possibly also dispersive) structure of these adjoint equations. Based on that, we will be able to recover observability of the adjoint system through Carleman estimates. 
\end{abstract}

\hfill
{\setlength{\baselineskip}{0.95\baselineskip}
\scriptsize\tableofcontents\par}
\hfill
\section{Introduction}
 In this work, we are interested in the control properties of the Navier-Stokes-Korteweg system. This system describes a compressible and viscous fluid on a region of $\R^d$ with $d\in\{1,2,3\}$, of density $\rho=\rho(t,x)$ and velocity field $u=u(t,x)$ and reads
\begin{equation}
\label{eqNSK}
\begin{cases}
\partial_t\rho+\divergence(\rho u)=0,\\
\partial_t(\rho u)+\divergence(\rho u\otimes u)-{\mathcal A(\rho)} u+\nabla (P(\rho))=\divergence(\mathcal{K(\rho)}),\\
\end{cases}
\end{equation}
where $P(\rho)$ is the pressure function assumed to depend only on the density $\rho$, $${\mathcal A(\rho)}u:=\divergence\left(2\mu(\rho)D_S(u)\right)+\nabla\left(\nu(\rho)\divergence{(u)}\right)$$ is the viscosity part, $D_S(u):=\frac 12(\nabla u+\!^t\nabla u)$ is the symmetric gradient and the capillarity tensor $\mathcal{K(\rho)}$ is given by 
$$
{\mathcal K(\rho)}:=\rho \divergence(\kappa(\rho)\nabla\rho)I_{\R^d}
+\frac12\big(\kappa(\rho)-\rho\kappa'(\rho)\big)|\nabla\rho|^2I_{\R^d}
-\kappa(\rho)\nabla\rho\otimes\nabla\rho,
$$
where $I_{\R^d}$ denotes the $d\times d$ identity matrix, see \cite{StructureofKortewegmodelsandstabilityofdiffuseinterfaces}. The coefficients $\nu=\nu(\rho)$ and $\mu=\mu(\rho)$ designate the bulk and shear viscosity, respectively and $\kappa=\kappa(\rho)$ the capillarity function. Note that all these coefficients are assumed to be functions of the density. 

The Navier-Stokes-Korteweg system describes a two-phase compressible and viscous fluid in the case of diffuse interface, in which the change of phase corresponds to a fast but regular transition zone for the density and the velocity. We refer to \cite{Sharpanddiffuseinterfacemethodsforphasetransitionproblemsinliquidvapourflows} for the modeling of phase transition and to \cite{OntheThermodynamicsofInterstitialWorking} for the full derivation of the Navier-Stokes-Korteweg model. Note that this model includes as a special case the quantum Navier-Stokes equation see for instance \cite{DerivationofviscouscorrectiontermsfortheisothermalquantumEulermodel} for its derivation from the Wigner equation.

 Based on physical considerations, it is natural to assume that 
\begin{equation}
	\label{Physical-Assumptions}
	\nu \geq 0, \qquad 2 \mu + \nu \geq 0, \qquad \kappa \geq 0.
\end{equation}
Note that if the viscosity parameters satisfy $ \mu > 0$ and $\mu + \nu >0$, and the capillarity coefficient $\kappa$ vanishes, then System \eqref{eqNSK} coincides with the compressible Navier-Stokes system. Below, we will be focusing on the case $\mu>0$, $2\mu+\nu>0$ and $\kappa>0$, at least locally, which corresponds to the Navier-Stokes-Korteweg model.
\medskip

{\bf Main results.} Before going further, let us remark that system \eqref{eqNSK} possesses some specific stationary states given by constant states $(\rho_\star, u_\star)$ with $\rho_\star >0$ and $u_\star \in \R^d$. Our goal is to analyse the local exact controllability property of \eqref{eqNSK} around these constant states $(\rho_\star, u_\star)$. For simplicity, we will reduce our analysis only to the case $u_\star = 0$ (the case $u_\star \in \R^d$ can be handled similarly).

Let us describe the geometrical setting. We work in the $d$-dimensional torus $\T_L:=(\R/L\Z)^{d}$ identified with $[0,L]^d$ with periodic boundary conditions, where $L>0$, and the controls will be assumed to act on some non-empty open subset $\omega$ of $\T_L$. 

Our main result is the following one: 

\begin{theo}
\label{Controle intern de NSK quantique sur le toremain}
	Let $d\in\{1,2,3\}$, $L>0$, and $\omega$ be a non-empty open subset of $\T_L$. 
	
	Let $\rho_{\star}>0$, and let us assume that 
	\begin{enumerate}
	    \item[{\bf (H1)}]\label{H1} $\kappa(\rho_{\star})$,  $\mu(\rho_{\star})$ and $2\mu(\rho_{\star})+\nu(\rho_{\star})$ are positive;
	    \item[{\bf (H2)}]\label{H2} there exists $\eta \in (0,\rho_\star)$ such that $\mu$ and $\nu$ belong to $\mathcal{C}^{2}([-\eta+\rho_{\star},\eta+\rho_{\star}])$ and $P$ and $\kappa$ belong to $\mathcal{C}^{3}([-\eta+\rho_{\star},\eta+\rho_{\star}])$.
	\end{enumerate}
	
	Then there exists $\delta>0$ such that, for all $(\rho_0,u_0)\in H^2(\T_L)\times H^1(\T_L)$ satisfying 
\begin{equation*}
    \|(\rho_0-\rho_{\star},u_0)\|_{H^2(\T_L)\times H^1(\T_L)}\leq \delta,
\end{equation*}
there exist a control $(v_{\rho},v_u)\in L^2(0,T;H^2(\T_L))\times L^2(0,T;H^1(\T_L))$ and a corresponding controlled trajectory $(\rho,u)$ solving 
\begin{equation}
\label{eqNSKTor}
\begin{cases}
\partial_t\rho+\divergence(\rho u)=v_{\rho}\mathbf{1}_{\omega} & \text{in}\ \ (0,T)\times\T_L,\\
\partial_t(\rho u)+\divergence(\rho u\otimes u)-{\mathcal A(\rho)} u+\nabla (P(\rho))=\divergence(\mathcal{K(\rho)})+v_{u}\mathbf{1}_{\omega} & \text{in}\ \ (0,T)\times\T_L,\\
(\rho,u)_{|_{t=0}}=(\rho_0,u_0) & \text{in } \T_L,
\end{cases}
\end{equation}
and satisfying
$$
	(\rho,u)_{|_{t=T}}=(\rho_{\star},0)\ \ \ \ \textit{in}\ \ \T_L.
$$ 
Besides, the controlled trajectory $(\rho,u)$ enjoys the following regularity
\begin{align*}
	& \rho \in \mathcal{C}([0,T]; H^2(\T_L))\cap L^2(0,T;H^3(\T_L))\cap H^1(0,T;H^1(\T_L)),
	\\
	& u \in \mathcal{C}([0,T]; H^1(\T_L))\cap L^2(0,T;H^2(\T_L))\cap H^1(0,T;L^2(\T_L)), 
\end{align*}
and the following positivity condition
$$
	\inf_{(t,x) \in [0,T] \times \T_L} \rho(t,x) > 0.
$$

\end{theo}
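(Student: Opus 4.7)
My plan follows the classical three-step paradigm for local exact controllability of nonlinear parabolic PDEs: linearize around the target state, establish null-controllability of the linearized system by duality through a Carleman estimate for its adjoint, then transfer the controllability to the full nonlinear problem by a fixed-point argument. Setting $\sigma := \rho - \rho_{\star}$ and treating the genuinely nonlinear contributions as right-hand sides, \eqref{eqNSKTor} rewrites as
\begin{equation*}
\begin{cases}
\partial_t \sigma + \rho_{\star}\, \dive u = v_\rho \mathbf{1}_\omega + F_1(\sigma, u), \\
\rho_{\star}\, \partial_t u - \mathcal{A}(\rho_{\star}) u + P'(\rho_{\star})\, \nabla \sigma - \rho_{\star}\kappa(\rho_{\star})\, \nabla \Delta \sigma = v_u \mathbf{1}_\omega + F_2(\sigma, u), \\
(\sigma, u)|_{t=0} = (\rho_0 - \rho_{\star}, u_0),
\end{cases}
\end{equation*}
where, thanks to the smoothness assumption \textbf{(H2)}, $F_1$ and $F_2$ gather terms that are quadratic or higher in $(\sigma, u)$.

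The second and central step is the null-controllability of the linearized system (i.e.\ taking $F_1 = F_2 = 0$), established in weighted Sobolev spaces compatible with the target regularity. By duality, this reduces to an observability inequality for the backward adjoint system. To unveil its structure, I would apply a Helmholtz decomposition to the adjoint velocity: its divergence-free component satisfies a decoupled backward heat equation, for which a standard parabolic Carleman estimate applies. The remaining subsystem, coupling the adjoint density with the potential part of the adjoint velocity, is non-trivial; eliminating the potential yields an effective fourth-order equation on the adjoint density that combines the viscous smoothing $\partial_t \Delta$, the pressure contribution $\Delta$, and the Korteweg term $\Delta^2$. This is the parabolic (and possibly dispersive) structure advertised in the abstract. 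A single-weight global Carleman estimate adapted to this operator then yields observability from an arbitrary open set $\omega$ in arbitrarily short time, without any geometric condition on $\omega$.

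The third step transfers linear null-controllability into the local nonlinear result. Following the source-term formulation of Fursikov-Imanuvilov, one solves the linearized control problem with right-hand sides in weighted $L^2$-spaces that vanish at $t=T$, and then closes the nonlinear problem by a Banach fixed-point theorem (or equivalently a Liusternik-Graves inverse mapping theorem) on a small ball in the corresponding energy space. Because $F_1, F_2$ are at least quadratic, their Lipschitz constants on small balls are arbitrarily small. The regularity of $(\rho, u)$ claimed in the statement corresponds exactly to the maximal regularity of the linearized operator, and the positivity $\inf \rho > 0$ follows from the embedding $H^2(\T_L) \hookrightarrow L^\infty$ together with the smallness of $\sigma$.

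The principal obstacle, and the one that justifies the detailed analysis advertised in the introduction, is the Carleman estimate for the coupled adjoint density / potential-velocity subsystem: one must simultaneously absorb the fourth-order Korteweg dispersion and the second-order viscous diffusion through a third-order crossterm, using a single weight that accommodates an arbitrary observation set $\omega$ and an arbitrarily short time $T$. Choosing the weighted spaces in which the Korteweg-type nonlinearities, which require three spatial derivatives on $\rho$, fit the linear maximal regularity well enough to produce a contractive fixed point is also delicate, and dictates the $L^2(0,T;H^3)$ component of the regularity announced in the theorem.
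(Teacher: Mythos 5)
Your overall three-step architecture (linearize around $(\rho_\star,0)$, Carleman-based observability of the adjoint, fixed point) matches the paper's, but the central technical step diverges and the version you describe has a genuine gap. You propose, after a Helmholtz split, to eliminate the potential component of the adjoint velocity and obtain ``an effective fourth-order equation on the adjoint density that combines $\partial_t\Delta$, $\Delta$, and $\Delta^2$.'' That elimination is not what happens: denoting by $q$ the divergence of the adjoint velocity, the pair $(\sigma,q)$ satisfies a first-order-in-time $2\times 2$ system, and eliminating $q$ produces $\partial_t^2\sigma-(2\mu_\star+\nu_\star)\,\partial_t\Delta\sigma+\kappa_\star\Delta^2\sigma+\ldots=g$, which is \emph{second}-order in time, not a parabolic equation with a fourth-order spatial part. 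A ``single-weight global Carleman estimate adapted to this operator'' is therefore not a standard ingredient, and you give no indication of how to derive it. The paper avoids this entirely: it keeps the $2\times 2$ system in $(\sigma,q)$, analyzes the constant coupling matrix ${}^t\!A$, and conjugates it to diagonal or Jordan form. This reduces the problem to scalar heat equations with possibly complex diffusion coefficient $\zeta$ (with $\Re\zeta>0$), for which a Carleman estimate can be proved (and is proved in the appendix, since the complex-coefficient case is not fully covered by the literature). This diagonalization, together with the case split between the diagonalizable and non-diagonalizable coupling matrix (which dictates a shift in the weight $\theta$ in the latter case), is the key missing idea in your proposal.

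A secondary but real omission: the observability estimate for the adjoint of the linearized Navier-Stokes-Korteweg system is needed in \emph{negative}-index Sobolev spaces on the torus (so that, by duality, one gets controllability with controls and trajectories in $L^2(H^1)\times L^2(L^2)$ and $L^2(H^3)\times L^2(H^2)$, respectively). The paper obtains this by yet another duality layer—it proves \emph{controllability} of the adjoint-of-the-adjoint ($2\times 2$ complex heat system) in $L^2(H^3)$ with $H^2$ initial data, together with carefully tracked weighted $H^2/H^3$ regularity estimates on the state and control. Saying that linear controllability is ``established in weighted Sobolev spaces compatible with the target regularity'' glosses over this mechanism, which is precisely where the weighted functional setting you mention as delicate has to be made precise.
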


\begin{rem}
	Hypothesis {\bf (H1)} concerning the sign of $\kappa$, $\mu$ and $\nu$ at the point $\rho_{\star}$ guarantees the parabolic-type structure of the linearized system, see afterwards. Let us point out that, here, we work with strong solutions on bounded intervals, so that, we do not need any assumption on the monotonicity of the pressure $P$. 
	
	Hypothesis {\bf (H2)} concerning the regularity of $\mu$, $\nu$, $\kappa$ and $P$ is mainly technical and is needed to handle the non-linear terms in the proof of Theorem \ref{Controle intern de NSK quantique sur le toremain}. 
\end{rem}

Since the torus is a rather academic example, let us start by pointing out that this result leads to an exact controllability result to constant trajectories $(\rho_\star, 0)$ in bounded domains $\Omega$ when the controls act on the whole boundary $\partial \Omega$. To be more precise, we have the following immediate corollary: 
\begin{coro}
\label{main resultmain}
	Let $d\in\{1,2,3\}$, $\Omega$ be a smooth bounded domain of $\R^d$. Let $\rho_{\star}>0$, and let us assume conditions {\bf (H1)} and {\bf (H2)} of Theorem \ref{Controle intern de NSK quantique sur le toremain}.
	
	Then there exists $\delta>0$ such that, for all $(\rho_0,u_0)\in H^2(\Omega)\times H^1(\Omega)$ satisfying 
	\begin{equation}
		\label{condition de petitesse pour donne initial}
		\|(\rho_0-\rho_{\star},u_0)\|_{H^2(\Omega)\times H^1(\Omega)}\leq\delta,
	\end{equation}
	there exists a controlled trajectory $(\rho,u)$ solving 
\begin{equation}
\label{eqNSKOmega}
\begin{cases}
\partial_t\rho+\divergence(\rho u)=0 & \text{in}\ \ (0,T)\times\Omega,\\
\partial_t(\rho u)+\divergence(\rho u\otimes u)-{\mathcal A(\rho)} u+\nabla (P(\rho))=\divergence(\mathcal{K(\rho)})& \text{in}\ \ (0,T)\times\Omega,\\
(\rho,u)_{|_{t=0}}=(\rho_0,u_0) & \text{in } \Omega,
\end{cases}
\end{equation}
and satisfying
$$
	(\rho,u)_{|_{t=T}}=(\rho_{\star},0)\ \ \ \ \textit{in}\ \ \Omega. 
$$ 
Besides, the controlled trajectory $(\rho,u)$ enjoys the following regularity
\begin{align*}
	& \rho \in \mathcal{C}([0,T]; H^2(\Omega))\cap L^2(0,T;H^3(\Omega))\cap H^1(0,T;H^1(\Omega)),
	\\
	& u \in \mathcal{C}([0,T]; H^1(\Omega))\cap L^2(0,T;H^2(\Omega))\cap H^1(0,T;L^2(\Omega)).
\end{align*}
and the following positivity condition
$$
	\inf_{(t,x) \in [0,T] \times \overline\Omega} \rho(t,x) > 0.
$$
\end{coro}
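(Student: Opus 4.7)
The plan is to reduce Corollary~\ref{main resultmain} to the torus result Theorem~\ref{Controle intern de NSK quantique sur le toremain} by embedding $\Omega$ into a sufficiently large torus and localizing the torus control so that it vanishes on $\Omega$. Concretely, I would first pick $L>0$ large enough that $\overline{\Omega}$ sits inside the open fundamental cube $(0,L)^d\subset \T_L$, and fix a non-empty open set $\omega\subset\T_L\setminus\overline{\Omega}$ to serve as the interior control region on the torus. Since $\omega$ is disjoint from $\Omega$, the controls produced by Theorem~\ref{Controle intern de NSK quantique sur le toremain} will not affect the equations on $\Omega$, so that from the point of view of $\Omega$ they act as implicit boundary data, which is exactly the content of \eqref{eqNSKOmega} (where no boundary conditions are prescribed on $\partial\Omega$).

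The second step is to extend the initial data from $\Omega$ to $\T_L$. Using a Stein-type bounded extension operator $E\colon H^s(\Omega)\to H^s(\R^d)$ and a cutoff $\chi\in\mathcal{C}^{\infty}_c((0,L)^d)$ equal to $1$ on a neighborhood of $\overline{\Omega}$, set
\begin{equation*}
\widetilde{\rho}_0:=\rho_\star+\chi\,E(\rho_0-\rho_\star),\qquad \widetilde{u}_0:=\chi\,E(u_0).
\end{equation*}
These functions are compactly supported inside $(0,L)^d$ and therefore define, by periodization, elements of $H^2(\T_L)$ and $H^1(\T_L)$ respectively, with a uniform bound
\begin{equation*}
\|(\widetilde{\rho}_0-\rho_\star,\widetilde{u}_0)\|_{H^2(\T_L)\times H^1(\T_L)}\leq C\,\|(\rho_0-\rho_\star,u_0)\|_{H^2(\Omega)\times H^1(\Omega)},
\end{equation*}
for some $C=C(\Omega,L,\chi)$ depending only on the geometry and the choice of extension and cutoff.

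Let $\delta'$ denote the smallness threshold produced by Theorem~\ref{Controle intern de NSK quantique sur le toremain} on $\T_L$ with control region $\omega$ and target $(\rho_\star,0)$, and set $\delta:=\delta'/C$. For any $(\rho_0,u_0)$ satisfying \eqref{condition de petitesse pour donne initial}, the extended data $(\widetilde{\rho}_0,\widetilde{u}_0)$ lie inside the $\delta'$-ball on the torus, so Theorem~\ref{Controle intern de NSK quantique sur le toremain} yields a controlled trajectory $(\widetilde{\rho},\widetilde{u})$ on $(0,T)\times\T_L$ with controls $(v_\rho,v_u)$ supported in $\omega$, reaching $(\rho_\star,0)$ at time $T$, with the prescribed regularity and positivity. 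I would then define $(\rho,u):=(\widetilde{\rho},\widetilde{u})|_{(0,T)\times\Omega}$; the disjointness $\omega\cap\overline{\Omega}=\emptyset$ makes $v_\rho\mathbf{1}_\omega$ and $v_u\mathbf{1}_\omega$ vanish identically on $\Omega$, so $(\rho,u)$ satisfies \eqref{eqNSKOmega}, while $\chi\equiv 1$ near $\overline{\Omega}$ ensures the initial condition is precisely $(\rho_0,u_0)$; the terminal condition, regularity, and positivity are inherited by restriction from the torus.

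No new analytical difficulty arises beyond Theorem~\ref{Controle intern de NSK quantique sur le toremain}; the only point requiring care is to use a single bounded extension operator together with a common cutoff, so as to treat the $H^2$ component (density) and the $H^1$ component (velocity) simultaneously while preserving the smallness assumption. This is standard once Stein's extension theorem is invoked on the smooth bounded domain $\Omega$.
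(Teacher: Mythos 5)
Your proposal is correct and follows essentially the same route as the paper's sketch: embed $\Omega$ into a torus $\T_L$, extend $(\rho_0-\rho_\star,u_0)$ continuously to $H^2(\T_L)\times H^1(\T_L)$, apply Theorem~\ref{Controle intern de NSK quantique sur le toremain} with a control region $\omega\subset\T_L\setminus\overline{\Omega}$, and restrict the controlled trajectory to $\Omega$. The only difference is that you spell out the extension step (Stein extension plus cutoff), which the paper leaves implicit.
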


Let us note that the controls do not appear explicitly in the equation \eqref{eqNSKOmega}. In fact, they are hidden in the boundary conditions, which do not appear in \eqref{eqNSKOmega}. 

We will not give the complete details of the proof of Corollary \ref{main resultmain}, and we only sketch it hereafter. Since $\Omega$ is bounded, it can be embedded into some torus $\T_L$, where $\T_L$ is identified to $[0,L]^d$ with periodic condition. We then consider the control problem on the torus $\T_L$ with controls appearing as source terms supported in $\T_L\setminus \overline{\Omega}$, starting from an initial datum obtained as an extension of $(\rho_0, u_0)$ to $\T_L$ (this can be done continuously from $H^2(\Omega) \times H^1(\Omega)$ to $H^2(\T_L) \times H^1(\T_L)$ if $\Omega$ is $\mathcal{C}^2$). Theorem \ref{Controle intern de NSK quantique sur le toremain} gives the existence of a controlled trajectory solving \eqref{eqNSKTor} on the torus, which, by restriction to $\Omega$, gives a solution to \eqref{eqNSKOmega}. 
Before going further, let us emphasize that the control result presented in Theorem \ref{Controle intern de NSK quantique sur le toremain} holds whatever $\omega$ is and in any positive time. This is in sharp contrast with the control results for the linearized compressible Navier-Stokes equation, corresponding to $\kappa = 0$. Indeed, for those equations, the linearized models have some initial data which cannot be controlled in short time, as a result of the transport-parabolic structure of the linearized compressible Navier-Stokes equations, see in particular the works \cite{ControllabilityandStabilizabilityoftheLinearizedCompressibleNavier-StokesSysteminOneDimension} and \cite{Somecontrollabilityresultsforlinearizedcompressiblenavierstokessystem}. However, one can obtain local exact controllability result around trajectories $(\rho_\star,u_\star)$ provided suitable geometric conditions related to the flow of the target velocity field are satisfied, see for instance \cite{LocalexactcontrollabilityforthetwoandthreedimensionalcompressibleNavierStokesequations}, \cite{LocalExactControllabilityfortheOneDimensionalCompressibleNavierStokesEquation} and \cite{Localboundarycontrollabilitytotrajectoriesforthe1DcompressibleNavierStokesequations}.

Let us now explain the reason for this striking difference between the control properties for the compressible Navier-Stokes equations and the Navier-Stokes-Korteweg system. The crucial point is to notice that the Navier-Stokes-Korteweg system actually behaves like a parabolic system. In order to see this property, let us consider the linearized version of \eqref{eqNSK} around $(\rho_\star,0)$, which is
\begin{equation*}
\left \{
\begin{array}{ll}
     \partial_t a +\divergence(u)=0\! 
     &\text{in}\ (0,T)\times\T_L, 
     \\ 
      \partial_t u-\rho_{\star}^{-1}(\mu(\rho_\star) \Laplace{u}-(\mu(\rho_\star)+\nu(\rho_\star))\nabla\divergence (u)) +P'(\rho_\star)\nabla a-\rho_{\star}\kappa(\rho_\star)\nabla\Laplace{a}=0 
      &\text{in}\ (0,T)\times\T_{L},
\end{array}
\right.
\end{equation*}
where $a:=\rho/\rho_{\star}-1$. To simplify the argument we omit the term $P'(\rho_\star)\nabla a$ in the second equation, since it is of lower order compared to $\rho_{\star}\kappa(\rho_\star)\nabla\Laplace{a}$.  By taking the Laplacian of the first equation and the divergence of the second equation, setting $b = -\Delta a$ and $q:=\divergence(u)$, we obtain the following subsystem of two scalar equations
\begin{equation}
\label{equation sur Pu et Qu}
\left \{
\begin{array}{lcr}

     \partial_t b - \Delta q=0\ \qquad\qquad\qquad\ \  \ \ \ \ \ \ \  \ \ \ \ \ \ \ \ \ \ \ \ \ \ \ &\text{in}\ (0,T)\times\T_{L},\\
      \partial_t q-\rho_{\star}^{-1}(2\mu(\rho_\star)+\nu(\rho_\star))\Laplace q+\rho_{\star}\kappa(\rho_\star)\Laplace b=0 &\text{in}\ (0,T)\times\T_{L}.

\end{array}
\right.
\end{equation}

Accordingly, it is essential to analyze the structure of the matrix
\begin{equation}
\label{matrice A}
   A:= \begin{pmatrix}
	 0 & -1 \\
	\rho_{\star}\kappa(\rho_\star) & - \rho_{\star}^{-1}(2\mu(\rho_\star)+\nu(\rho_\star))
\end{pmatrix}.
\end{equation}
Explicit computations show that:
\begin{enumerate}
\item If $\rho_{\star}^{-2}(2\mu(\rho_\star)+\nu(\rho_\star))^2> 4\rho_{\star}\kappa(\rho_\star)$, then $A$  is diagonalizable and has two real eigenvalues:
$$
\frac{-\rho_{\star}^{-1}(2\mu(\rho_\star)+\nu(\rho_\star))\pm\sqrt{\rho_{\star}^{-2}(2\mu(\rho_\star)+\nu(\rho_\star))^2-4\rho_{\star}\kappa(\rho_\star)}}{2}.
$$
\item If $\rho_{\star}^{-2}(2\mu(\rho_\star)+\nu(\rho_\star))^2<4\rho_{\star}\kappa(\rho_\star)$, then $A$ is diagonalizable and has two complex eigenvalues:
$$
\frac{-\rho_{\star}^{-1}(2\mu(\rho_\star)+\nu(\rho_\star))\pm i\sqrt{4\rho_{\star}\kappa(\rho_\star)-\rho_{\star}^{-2}(2\mu(\rho_\star)+\nu(\rho_\star))^2}}{2}.
$$
\item If $\rho_{\star}^{-2}(2\mu(\rho_\star)+\nu(\rho_\star))^2 = 4\rho_{\star}\kappa(\rho_\star)$, then $A$ is similar to an upper triangular matrix having the double eigenvalue 
$$
	\frac{-\rho_{\star}^{-1}(2\mu(\rho_\star)+\nu(\rho_\star))}{2}.
$$
\end{enumerate}
Assumption {\bf (H1)} implies that, in any of the three above cases, the real parts of the eigenvalues are negative. This therefore implies that the system \eqref{equation sur Pu et Qu} has a parabolic/dispersive structure. 

Note that this parabolic/dispersive structure has already been pointed out in the literature. Indeed, the analytic smoothing effect in space variable for this equation for both the velocity field and the density has been shown in \cite{GevreyanalyticityanddecayforthecompressibleNavier-Stokessystemwithcapillarity} (see also \cite{AnalyticregularityforNavierStokesKortewegmodelonpseudomeasurespaces} and \cite{GlobalexistenceandanalyticityofLpsolutionstothecompressiblefluidmodelofKortewegtype}) and is precisely based on  the derivation of dissipative estimate on the Fourier modes of the solutions to the linearized system, underlying the parabolic structure of the system. In views of the terminology in \cite{Dissipativestructureforsymmetrichyperbolic-parabolicsystemswithKorteweg-typedispersion}, we say that the system is \textit{purely parabolic} in the cases 1 and 3. In the case 2, we have dispersion in addition to the dissipation (note that this case contains the quantum Navier-Stokes system, see for instance \cite{OnNavierStokesKortewegandEulerKortewegsystemsapplicationtoquantumfluidsmodels}) and the system should thus be considered as \textit{parabolic/dispersive}.\\ 
Our analysis will be based on a similar discussion, but on the adjoint of the linearized equations of \eqref{eqNSKOmega} which involves the transpose matrix $^t\!A$. In fact, our analysis will be split into two cases: \begin{itemize}
    \item $^t\!A$ is diagonalizable which correspond to Items $1$ and $2$ above;
 \item $^t\!A$ is nondiagonalizable which correspond to Item $3$ above.
\end{itemize}

We end this section by mentioning some related results and open problems.

{\bf Cauchy theory.} Although we will not use any result on the Cauchy theory of the Navier-Stokes-Korteweg system, let us point out that the existence of strong solutions, respectively weak,  has been obtained in \cite{GlobalstrangsolutionfortheKortewegsystemwithquantumpressureindimensionNgeq2,GlobalSolutionsofaHighDimensionalSystemforKortewegMaterials}, respectively \cite{GlobalWeakSolutionstoCompressibleNavier-StokesEquationsforQuantumFluids}. Also note that, in the case of the whole space by using Fourier analysis methods related to the parabolic structure mentioned above, the well-posedness of the Cauchy problem in critical Besov spaces for global and local solutions and in dimension $d\geq 2$ is established in \cite{ExistenceofsolutionsforcompressiblefluidmodelsofKortewegtype}.

{\bf Open problems.} In Theorem \ref{Controle intern de NSK quantique sur le toremain}, we consider internal control which appear both in the continuity equations and the momentum equations. In order to get a more physically relevant interpretation of the internal controllability as forces, it would be interesting to investigate the controllability of the Navier-Stokes-Korteweg system when the control acts only in the momentum equations. In such case, one could rely for instance on the algebraic solvability method as in \cite{LocalnullcontrollabilityofthethreedimensionalNavierStokessystemwithadistributedcontrolhavingtwovanishingcomponents, Indirectcontrollabilityofsomelinearparabolicsystemsofmequationswithm1controlsinvolvingcouplingtermsofzeroorfirstorde}. Note that such results have also been obtained in different contexts: for $1$-d compressible Navier-Stokes equation (see for instance \cite{Localboundarycontrollabilitytotrajectoriesforthe1DcompressibleNavierStokesequations}), for coupled hyperbolic-parabolic system (see for instance \cite{Nullcontrollabilityfortheparabolicequationwithacomplexprincipalpart}) and for Kuramoto-Sivashinsky system (see for instance \cite{LocalcontrollabilityofthestabilizedKuramotoSivashinskysystembyasinglecontrolactingontheheatequation}). 


Another  interesting question concerns the controllability of the Navier-Stokes-Korteweg system  \eqref{eqNSK} on bounded open domain of $\R^d$ with controls localized on an nonempty open subset of the boundary. The additional difficulty compared to the ones in Theorem \ref{Controle intern de NSK quantique sur le toremain} is that the algebraic manipulations used to make explicitly appear the parabolic structure of the system create intricate boundary conditions on the systems, which are difficult to handle. We point out that such intricate terms also appear, with a different coupling, in the context of non-homogeneous incompressible Navier-Stokes system, in which they can be handled through appropriate weighted energy estimates, see \cite{Localcontrollabilitytotrajectoriesfornon-homogeneousincompressibleNavier-Stokesequations}.
\medskip

\noindent {\bf Outline of the article.} In Section \ref{Sec-Background}, we start by recalling and developing the controllability results obtained for the heat equations, which will be used as a building block in all our proofs. Then, in Section \ref{Sec-Strategy}, we present the strategy to prove Theorem \ref{Controle intern de NSK quantique sur le toremain} based on the analysis of the controllability of the linearized version of \eqref{eqNSKTor}: by duality, we only have to show an observability estimate on the adjoint system, which is obtained by identifying a closed subsystem of the adjoint on which the parabolic structure appears in a somewhat decoupled manner. In Sections \ref{Sec-Diag} and \ref{Sec-NonDiag} we prove the controllability of the adjoint of the resulting system depending on the diagonalizabilty of $^t\!A$ (Section \ref{Sec-Diag}) or not (Section \ref{Sec-NonDiag}). In Section \ref{Sec-Proof-Cont} we use the results of Sections \ref{Sec-Diag} and \ref{Sec-NonDiag} to show the controllability of the linearized model in suitably regular Sobolev spaces. In Section \ref{Sec-Proof-Main}, we show the local exact controllability of \eqref{eqNSKTor} using a fixed point argument. In the Appendix we  give the proof of the Carleman estimates used in this article for the complex coefficient heat equation (which coincides with the one in \cite{Localcontrollabilitytotrajectoriesfornon-homogeneousincompressibleNavier-Stokesequations} when the coefficients are real).
\begin{notation}
We set, for any $(\ell,\sigma,p)\in\Z\times\R\times([1,+\infty[\cup\{\infty\})$
$$
H^\ell(H^{\sigma}):=H^\ell(0,T;H^{\sigma}(\T_L))\ \ \ \ \text{and}\ \ \ \  L^p(H^{\sigma}):=L^p(0,T;H^{\sigma}(\T_L)),
$$
and in the same spirit
$$
\|\cdot\|_{H^\ell(H^{\sigma})}:=\|\cdot\|_{H^\ell(0,T;H^{\sigma}(\T_L))},\ \ \ \|\cdot\|_{L^p(H^{\sigma})}:=\|\cdot\|_{L^p(0,T;H^{\sigma}(\T_L))}\ \ \ \text{and} \ \ \ \|\cdot\|_{H^{\sigma}}:=\|\cdot\|_{H^{\sigma}(\T_L)}. 
$$

Throughout this article, we will also use the notation $f\lesssim g$ to express that there exists a positive constant $C$, such that $f\leq C g$. In some proofs, it will be important to underline the fact that the positive constant does not depend on some parameter. When this occurs, this will be said within the proof.
\end{notation}

\noindent\textbf{Acknowledgements.} The author expresses their gratitude to Sylvain Ervedoza for several comments on a preliminary version of this article. 

\section{Controllability of the heat equation}\label{Sec-Background}
In this section we recall and develop the control results for the complex-valued heat equation. Let $L$ be a positive real number and $\omega$ a non empty open subset of $\T_L$ such that $\overline{\omega}\subset\T_L$. Let $\zeta$ be a complex number, such that 
 \begin{equation}
     \label{2.2 BEG complex}
     \Re(\zeta)>0.
 \end{equation}
 In order to add a margin on the control zone $\omega$, we introduce a non-negative smooth cut-off function $\chi_0$ such that there exist two proper open subsets $\omega_0$ and $\omega_1$ of $\T_L$ such that 
\begin{equation}
\label{I 02/11/2023}
\omega_0\subset supp(\chi_0)\subset \omega_1\Subset \omega \ \ \text{and}\ \ \chi_0=1\ \text{on}\ \omega_0.
\end{equation}
 We consider the following controllability problem: \textit{Given $r_0$ and $f$, find a control function $v_r$ such that the solution $r$ of} 
\begin{equation}
    \label{(*)}
    \left \{
\begin{array}{lcr}
     \partial_t r - \zeta\Laplace r=f+v_{r}\chi_0\ \  \textit{in}\ (0,T)\times\T_{L},\\
     r_{|_{t=0}}=r_0\ \ \qquad\ \ \qquad \ \ \ \, \textit{in}\ \T_L,
\end{array}
\right.
\end{equation}
\textit{satisfies}
\begin{equation}
    \label{(**)}
    r_{|_{t=T}}=0\ \ \textit{in}\ \T_L.
\end{equation}
We introduce Carleman estimates derived from Carleman estimates for real coefficients heat equation established in \cite{Localcontrollabilitytotrajectoriesfornon-homogeneousincompressibleNavier-Stokesequations}. In our context we need this Carleman estimate also for complex coefficients heat equation which we establish in the appendix.

\subsection{Construction of the weight function}

Let $\psi$ be a function $\psi$ in $\mathcal{C}^2(\T_L,\R)$ such that, for every $x\in\T_L$
 \begin{equation}
     \label{2.32 EGG}
     \psi(x)\in[6,7],
 \end{equation}
and
 \begin{equation}
\label{2.35 EGG}
\inf_{\T_L\setminus\overline{\omega_0}}\lbrace |\nabla\psi|\rbrace>0.
 \end{equation}
Such a function exists according to \cite[Theorem 9.4.3, p.299]{ObservationandControlforOperatorSemigroups}.

 We choose $T_0>0$ and $\frac{1}{4}\geq T_1>0$ small enough, so that
 $$
 T_0+2T_1<T.
 $$
 For any $m\geq 2$, we introduce a weight function $\theta_m\in\mathcal{C}^2([0,T))$ such that 
 \begin{equation}
 \label{definition de theta}
     \theta_m (t)=\left \{
\begin{array}{ll}
 \displaystyle   1+\left(1-\frac{t}{T_0}\right)^{m} & \text{for all}\ t\in [0,T_0], 
 \\ 
    1 & \text{for all}\ t\in [T_0,T-2T_1],
    \\
    \theta_m\ \text{is increasing} & \text{in}\  [T-2T_1,T-T_1],
    \\
    \displaystyle\frac{1}{T-t}\  & \text{for all}\ t\in[T-T_1,T).
\end{array}
\right.
 \end{equation}
 Then we consider the following weight function, given for $s \geq 1$ and $\lambda \geq 1$, and for any $(t,x)\in [0,T)\times\T_L$ by 
 \begin{equation}
 \label{Def-Varphi}
 \vphi_{s, \lambda}(t,x):=\theta_m(t)(\lambda e^{12\lambda}-e^{\lambda\psi(x)}), 
 \quad \text{ where } 
 m=s\lambda^2e^{2\lambda},
\end{equation}
 which is always larger than $2$. 
 
 In the following, for simplifying notations, we will always denote $\varphi_{s, \lambda}$ and $\theta_m$ simply by $\varphi$ and $\theta$. 
 
 Note that $\theta$ is bounded by below by a positive constant, more precisely
 \begin{equation}
     \label{minoration de theta}
     \theta \geq 1\ \ \text{in}\ \ [0,T).
 \end{equation}
We point out that, due to the definition of $\psi$ and to Condition \eqref{2.32 EGG}, and using that $\lambda\geq 1$, we have the following bounds for any $(t,x)$ in $[0,T)\times\T_L$:
\begin{equation}
    \label{2.41 EGG}
    \frac{3}{4}\Phi(t)\leq \vphi(t,x)\leq \Phi(t),
\end{equation}
where\footnote{In \cite{LocalexactcontrollabilityforthetwoandthreedimensionalcompressibleNavierStokesequations}, the authors use $\frac{14}{15}\Phi\leq \vphi$ in $[0,T[\times \T_L$, while in this article, we only use $\frac{3}{4}\Phi\leq \vphi$ in $[0,T[\times \T_L$, which is of course weaker.}
\begin{equation}
    \Phi(t):=\theta(t)\lambda e^{12\lambda}.
\end{equation}
\subsection{Controllability results for the complex coefficients heat equation}
In this section we give some tools related to the controllability of the heat equation. We use classical method to study the controllability properties of \eqref{(*)}, which is based on the observability of the adjoint system, obtained here with the following Carleman estimates for the heat equation which we introduce in the following lemma.
\begin{lem}
\label{theorem 3.2 EGG}
Let $T>0$ and $\zeta$ a complex number satisfying \eqref{2.2 BEG complex}. There exist three positive constants $C$, $s_0\geq1$ and $\lambda_0\geq 1$, large enough, such that for any smooth function $w$ on $[0,T]\times\T_L$ and for all $s\geq s_0$, we have
\begin{align*}
s^{\frac{3}{2}}\|\theta^{\frac{3}{2}}we^{-s\vphi}\|_{L^2(L^2)}+s^{\frac 12}\|\theta^{\frac{1}{2}}\nabla & we^{-s\vphi}\|_{L^2(L^2)}  +s\|w(0)e^{-s\vphi(0)}\|_{L^2}\\
& \leq C\left(\|(-\partial_t-\overline{\zeta}\Laplace)we^{-s\vphi}\|_{L^2(L^2)}+s^{\frac 32}\|\theta^{\frac 32}\chi_0 we^{-s\vphi}\|_{L^2(L^2)}\right).
\end{align*}

\end{lem}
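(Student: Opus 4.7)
The approach mirrors the real-coefficient Carleman estimate in \cite{Localcontrollabilitytotrajectoriesfornon-homogeneousincompressibleNavier-Stokesequations}, adapted to handle the complex diffusion coefficient $\overline{\zeta}$ under the hypothesis $\Re(\zeta)>0$. Setting $z := we^{-s\vphi}$ and writing $\overline{\zeta} = a - ib$ with $a = \Re(\zeta) > 0$, a direct computation yields the conjugated operator
$$P_\vphi z := e^{-s\vphi}(-\partial_t - \overline{\zeta}\Laplace)(e^{s\vphi}z) = -\partial_t z - \overline{\zeta}\Laplace z - 2s\overline{\zeta}\nabla\vphi\cdot\nabla z - s\overline{\zeta}(\Laplace\vphi)z - s^2\overline{\zeta}|\nabla\vphi|^2 z - s(\partial_t\vphi)z.$$
First, I would decompose $P_\vphi z = P_1 z + P_2 z + R z$, where $P_1$ is self-adjoint and $P_2$ is skew-adjoint with respect to the real inner product $\langle f,g\rangle := \Re\int_{(0,T)\times\T_L} f\bar g$, collecting in $P_1$ the zero-order and Laplacian-type contributions, in $P_2$ the time derivative and transport-type contributions, and in $R$ the lower-order remainders.

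Next, from the orthogonality identity
$$\|P_\vphi z\|_{L^2(L^2)}^2 \geq \|P_1 z\|_{L^2(L^2)}^2 + \|P_2 z\|_{L^2(L^2)}^2 + 2\langle P_1 z, P_2 z\rangle - C\|Rz\|_{L^2(L^2)}^2,$$
I would compute the cross term $\langle P_1 z, P_2 z\rangle$ by integration by parts in both $t$ and $x$. The structure of $\vphi$, in particular $\nabla\vphi = -\theta\lambda e^{\lambda\psi}\nabla\psi$ together with $\inf_{\T_L\setminus\overline{\omega_0}}|\nabla\psi|>0$ from \eqref{2.35 EGG}, produces positive dominant terms of order $s^3\lambda^4\theta^3 e^{3\lambda\psi}|z|^2$ and $s\lambda^2\theta e^{\lambda\psi}|\nabla z|^2$, up to contributions supported in $\omega_0$ and remainders absorbable by enlarging $s_0$ and $\lambda_0$. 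The positivity $a = \Re(\overline{\zeta})>0$ is exactly what guarantees that the real part of the leading quadratic form is coercive; the imaginary part $-ib\Laplace z$ only contributes skew-symmetric quantities that either cancel under $\Re$ or are controlled by $|\zeta|$ times the dominant positive $a$-terms plus absorbable corrections.

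Translating the estimate back to $w$ via $z = we^{-s\vphi}$ yields the interior weighted bound
$$s^{3/2}\|\theta^{3/2}we^{-s\vphi}\|_{L^2(L^2)} + s^{1/2}\|\theta^{1/2}\nabla we^{-s\vphi}\|_{L^2(L^2)} \lesssim \|(-\partial_t -\overline{\zeta}\Laplace)w\cdot e^{-s\vphi}\|_{L^2(L^2)} + s^{3/2}\|\theta^{3/2}\chi_0 we^{-s\vphi}\|_{L^2(L^2)},$$
after absorbing the $\chi_0$-localized lower-order error in the standard way using the cut-off in \eqref{I 02/11/2023}. Finally, to recover the initial trace term $s\|w(0)e^{-s\vphi(0)}\|_{L^2}$, I would use a direct energy identity on $[0,T_0]$: multiplying $(-\partial_t-\overline{\zeta}\Laplace)w$ by $\overline{w}e^{-2s\vphi}$, taking the real part, and integrating on $[0,T_0]\times\T_L$, one obtains a boundary term at $t=0$ together with bulk terms controlled by the interior weighted norms already established. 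Exploiting that on $[0,T_0]$ the weight $\theta(t) = 1 + (1-t/T_0)^m$ with $m = s\lambda^2 e^{2\lambda}$ makes $\partial_t\vphi(0)$ carry the large factor $s$, one extracts the prefactor $s$ in front of $\|w(0)e^{-s\vphi(0)}\|_{L^2}^2$ as desired.

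The main obstacle will be carefully tracking the complex coefficients through the integration-by-parts steps: in the real-coefficient argument every cross-term naturally lies in $\R$, while here one must systematically take real parts and verify that the imaginary contribution from $-ib\Laplace z$ does not spoil the coercivity of the dominant quadratic form. The hypothesis $\Re(\zeta)>0$, combined with the fact that $\vphi$ and $\psi$ are real-valued, is precisely what rules out this obstruction, so that the algebraic computations of the real case transfer with only cosmetic modifications involving the constants $a$, $b$ and $|\zeta|$.
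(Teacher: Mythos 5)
Your overall plan — conjugate, split, compute the cross inner product, and exploit $\Re(\zeta)>0$ for coercivity — matches the paper's strategy, but the decomposition you describe does not actually have the properties you claim, and this is exactly the non-obvious part of the complex-coefficient extension.

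You propose to "collect in $P_1$ the zero-order and Laplacian-type contributions, in $P_2$ the time derivative and transport-type contributions" and assert that $P_1$ is then self-adjoint and $P_2$ skew-adjoint. That fails when $\zeta$ is genuinely complex: writing $\overline{\zeta}=a-ib$, the operator $\overline{\zeta}\Laplace$ is not Hermitian-self-adjoint (its adjoint is $\zeta\Laplace$), and $\overline{\zeta}\,(2s\nabla\vphi\cdot\nabla+s\Laplace\vphi)$ is not skew-adjoint. The correct exact split $P_1=\tfrac12(P_\vphi+P_\vphi^*)$, $P_2=\tfrac12(P_\vphi-P_\vphi^*)$ (with no remainder $R$ at all) \emph{crosses} the real and imaginary parts between the two groups: $a\Laplace$ lands in $P_1$ but $ib\Laplace$ lands in $P_2$, while the $ib$-weighted transport terms land in $P_1$ and the $a$-weighted transport terms land in $P_2$. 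Concretely, $P_1\mathbf{w}=-a(\Laplace\mathbf{w}+s^2|\nabla\vphi|^2\mathbf{w})\mp ib(2s\nabla\vphi\cdot\nabla\mathbf{w}+s\Laplace\vphi\mathbf{w})-s\partial_t\vphi\,\mathbf{w}$ and $P_2\mathbf{w}=-\partial_t\mathbf{w}\mp ib(\Laplace\mathbf{w}+s^2|\nabla\vphi|^2\mathbf{w})-a(2s\nabla\vphi\cdot\nabla\mathbf{w}+s\Laplace\vphi\mathbf{w})$. Without this crossing, $2\langle P_1z,P_2z\rangle$ is not the commutator-type expression that integrates by parts cleanly, and the ``imaginary contributions'' you hope will cancel under $\Re$ or be absorbable will instead appear at leading order in $s^3$. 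This is precisely why the paper singles out this computation for an appendix. You would have to rediscover the crossed split, not merely ``track constants.''

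A genuinely different choice you make is recovering $s\|w(0)e^{-s\vphi(0)}\|_{L^2}$ by a separate energy identity on $[0,T_0]$. In the paper this term falls out for free from the time-boundary contributions of $\Re(I_{1,1})$, $\Re(I_{2,1})$, $\Re(I_{3,1})$ in the cross-product expansion, using $\mathbf{w}(T)=\nabla\mathbf{w}(T)=0$ and the specific size of $-\partial_t\vphi(0)$. Your energy-identity route can also be made to work and is sometimes more transparent, but it requires separately checking that the bulk terms it generates are controlled by the interior Carleman estimate you have already proved, and that the cross term $s\,\Re\bigl(i\beta\int\nabla\vphi(0)\cdot\nabla\mathbf{w}(0)\overline{\mathbf{w}(0)}\bigr)$ (which is absent in the real case) can be absorbed; the paper handles this by a Young estimate against the $\|\nabla\mathbf{w}(0)\|_{L^2}^2$ term. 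If you fix the $P_1/P_2$ decomposition, either route to the initial-trace term is viable.
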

We give the proof of this lemma in Appendix \ref{App}. Note that, when $\zeta$ is a positive real number, this is established in \cite{Localcontrollabilitytotrajectoriesfornon-homogeneousincompressibleNavier-Stokesequations}. When $\zeta$ is a complex number satisfying \eqref{2.2 BEG complex}, the Carleman estimates in Theorem \ref{theorem 3.2 EGG} have not been done in the literature with the weight function $\varphi$ defined in \eqref{Def-Varphi}. However, similar Carleman estimates have been obtained when the weight function is the one in  \cite{ControllabilityofEvolutionequations} (see also \cite{ObservationandControlforOperatorSemigroups} Subsections 9.4, 9.5 and 9.6 for more comments on the control of parabolic equations), which is singular at $t= 0$ and at $t = T$, see in particular the works \cite{Nullcontrollabilityfortheparabolicequationwithacomplexprincipalpart} or \cite{NullControllabilityoftheComplexGinzburgLandauEquation}. 

Still, the proof of Theorem \ref{theorem 3.2 EGG} in the case of non-real parameter $\zeta$ is not completely contained in \cite{Localcontrollabilitytotrajectoriesfornon-homogeneousincompressibleNavier-Stokesequations} nor in \cite{Nullcontrollabilityfortheparabolicequationwithacomplexprincipalpart,NullControllabilityoftheComplexGinzburgLandauEquation}, and some terms need to be handled carefully, which is why we give the complete proof of Lemma \ref{theorem 3.2 EGG} in Appendix \ref{App}.

Note that we chose to take the weight function $\varphi$ in \eqref{Def-Varphi}, which is not singular at time $t = 0$, since by duality, as we will see afterwards, it allows to solve directly the control problem \eqref{(*)}--\eqref{(**)} without relying on the well-posedness of the equations. Regarding the heat equation \eqref{(*)}, this is of course not a big issue, but this is more subtle when dealing with systems. 

The proof of the following results in this subsection are left to the reader since they are straightforward adaptations of the corresponding results from \cite{LocalexactcontrollabilityforthetwoandthreedimensionalcompressibleNavierStokesequations} and \cite{LocalExactBoundaryControllabilityfortheCompressibleNavierStokesEquations}. As in \cite{LocalexactcontrollabilityforthetwoandthreedimensionalcompressibleNavierStokesequations}, these estimates lead to the following controllability result which is an adaptation of Theorem 3.3 of \cite{LocalexactcontrollabilityforthetwoandthreedimensionalcompressibleNavierStokesequations} to the case of complex coefficients heat equation.
\begin{theo}
\label{theorem 3.3 EEG}
Let $T>0$. Assume that $\zeta$ satisfies \eqref{2.2 BEG complex}. There exist constants $C>0$ and $s_0\geq 1$ such that for all $s\geq s_0$, for all $f\in L^{2}(0,T;L^2(\T_L))$ satisfying
\begin{equation}
    \label{3.9 EGG}
    \|\theta^{-\frac{3}{2}}f e^{s\vphi}\|_{L^2(L^2)}<+\infty
\end{equation}
 and $r_0\in L^2(\T_L)$, there exists a solution $(r,v_{r})$ of the control problem \eqref{(*)}-\eqref{(**)} which furthermore satisfies the following estimate:
 \begin{align}
     s^{\frac{3}{2}}\|re^{s\vphi}\|_{L^2(L^2)}+\|\theta^{-\frac 32}\chi_0v_{r}e^{s\vphi}\|_{L^2(L^2)}+s^{\frac 12}\|\theta^{-1}\nabla &re^{s\vphi}\|_{L^2(L^2)}\nonumber\\
     &\leq C\left(\|\theta^{-\frac 32}fe^{s\vphi}\|_{L^2(L^2)}+s^{\frac 12}\|r_0e^{s\vphi(0)}\|_{L^2}\right).\label{3.10 EGG}
 \end{align}
 Moreover, the solution $(r,v_r)$ can be obtained through a linear operator in $(r_0,f)$.
\end{theo}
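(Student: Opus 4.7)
My approach is the classical Fursikov--Imanuvilov duality method, turning the Carleman estimate of Lemma \ref{theorem 3.2 EGG} into a weighted controllability result. Let $L^*z := -\partial_t z - \overline{\zeta}\Delta z$ denote the formal adjoint of the heat operator in \eqref{(*)}.

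The plan is to introduce on the space $P_0$ of smooth functions on $[0,T]\times\T_L$ the bilinear form
$$a(z_1,z_2) := \int_0^T\!\!\int_{\T_L} L^*z_1\,\overline{L^*z_2}\,e^{-2s\varphi}\,dx\,dt + s^3\int_0^T\!\!\int_{\T_L}\theta^3\chi_0^2\,z_1\bar z_2\,e^{-2s\varphi}\,dx\,dt.$$
Lemma \ref{theorem 3.2 EGG} makes $a$ a scalar product on $P_0$; let $P$ be the associated Hilbert completion, on which the trace $z\mapsto z(0)e^{-s\varphi(0)}$ and the weighted value $z\mapsto \theta^{3/2}z e^{-s\varphi}$ extend continuously. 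On $P$, the linear form $\ell(z) := (r_0, z(0))_{L^2(\T_L)} + \int_0^T\!\int_{\T_L} f\bar z\,dx\,dt$ is continuous, with
$$|\ell(z)| \leq C\bigl(s^{-1}\|r_0 e^{s\varphi(0)}\|_{L^2} + s^{-3/2}\|\theta^{-3/2} f e^{s\varphi}\|_{L^2(L^2)}\bigr)\|z\|_P,$$
by Cauchy--Schwarz combined with Lemma \ref{theorem 3.2 EGG} (this is where \eqref{3.9 EGG} is used). Riesz representation then produces a unique $\hat z\in P$ with $a(\hat z, z) = \ell(z)$ for all $z\in P$, depending linearly on $(r_0, f)$ and satisfying the same estimate on $\|\hat z\|_P$.

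I would then define $r := e^{-2s\varphi}L^*\hat z$ and $v_r := -s^3\theta^3\chi_0\hat z\,e^{-2s\varphi}$. Integrating by parts in the identity $a(\hat z, z) = \ell(z)$ (no boundary terms on $\T_L$, using $\overline{\overline{\zeta}}=\zeta$) rewrites it as
$$\int_0^T\!\!\int_{\T_L}(\partial_t r - \zeta\Delta r - f - v_r\chi_0)\bar z\,dx\,dt + (r(0)-r_0, z(0)) - (r(T), z(T)) = 0$$
for every smooth $z$; successively testing against interior functions and functions with arbitrary traces at $t=0$ and $t=T$ recovers \eqref{(*)}--\eqref{(**)}. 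The bounds on $s^{3/2}\|re^{s\varphi}\|_{L^2(L^2)}$ and $\|\theta^{-3/2}\chi_0 v_r e^{s\varphi}\|_{L^2(L^2)}$ then follow at once from the definitions of $r$ and $v_r$ together with the estimate $s^{3/2}\|\hat z\|_P \leq C(s^{1/2}\|r_0 e^{s\varphi(0)}\|_{L^2} + \|\theta^{-3/2} f e^{s\varphi}\|_{L^2(L^2)})$.

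The remaining gradient estimate $s^{1/2}\|\theta^{-1}\nabla r\,e^{s\varphi}\|_{L^2(L^2)}$ is not built into the variational formulation and is, I expect, the main technical hurdle. I would derive it by a weighted energy estimate on \eqref{(*)} itself: multiply $\partial_t r - \zeta\Delta r = f + v_r\chi_0$ by $\bar r\,\theta^{-2} e^{2s\varphi}$, integrate over $(0,T)\times\T_L$, and take real parts. The Laplacian term produces $\Re(\zeta)\int|\nabla r|^2\theta^{-2}e^{2s\varphi}$, which is coercive by \eqref{2.2 BEG complex}, while the commutator contributions from $\nabla(\theta^{-2}e^{2s\varphi})$ and $\partial_t(\theta^{-2}e^{2s\varphi})$, as well as the source terms, are absorbed using the weighted bounds already established on $r$ and $v_r$. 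The delicate point is the careful bookkeeping of the weights built into \eqref{Def-Varphi} (in particular the estimates on $\partial_t\varphi$ and $|\nabla\varphi|^2$ in terms of $s\theta^2$), which is what forces the correct weight $\theta^{-1}$ in front of $\nabla r$ and the power $s^{1/2}$.
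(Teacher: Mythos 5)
Your argument is the standard Fursikov--Imanuvilov variational construction applied to Lemma \ref{theorem 3.2 EGG}, followed by a weighted energy estimate for the gradient bound, which is precisely the method of Theorem 3.3 in \cite{LocalexactcontrollabilityforthetwoandthreedimensionalcompressibleNavierStokesequations} to which the paper defers for this result. One small bookkeeping correction: the multiplier in your energy estimate should carry an extra factor of $s$ (take $s\bar r\,\theta^{-2}e^{2s\varphi}$) so that the coercive Laplacian term $s\,\Re(\zeta)\int\theta^{-2}|\nabla r|^2 e^{2s\varphi}$ directly yields the required $s^{1/2}\|\theta^{-1}\nabla r\,e^{s\varphi}\|_{L^2(L^2)}$, with the cross and source terms then absorbed against $s^{3}\|re^{s\varphi}\|^2_{L^2(L^2)}$ and the already-established weighted bounds.
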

We need also to know what can be done when the source term $f$ is more regular and lies in $L^2(0,T;H^{1}(\T_L))$ or in $L^2(0,T; H^{2}(\T_L))$ (\textit{c.f.} \cite{Localcontrollabilitytotrajectoriesfornon-homogeneousincompressibleNavier-Stokesequations} Proposition 3.4). By adapting the proof of Proposition 3.4 of \cite{LocalexactcontrollabilityforthetwoandthreedimensionalcompressibleNavierStokesequations}, we deduce the following lemma.
\begin{lem}
\label{proposition 3.4 EGG}
Let $T>0$. Consider the solution $(r,v_r)$ construct in Theorem \ref{theorem 3.3 EEG}. Then, with the above notation, for some constant $C>0$ independent of $s$, we have the following properties:
\begin{enumerate}
    \item ($H^2$ \text{regularity of the control} $v_r\in L^{2}(0,T;H^2(\T_L))$ and
    $$
    \|\chi_0v_re^{3s\Phi/4}\|_{L^2(H^2)}\leq C\left(\|\theta^{-\frac 32}f e^{s\vphi}\|_{L^2(L^2)}+\|r_0e^{s\Phi(0)}\|_{L^2}\right).
    $$
    \item ($H^3$ \textit{regularity estimate for the state}) if $r_0\in H^2(\T_L)$, $fe^{3s\Phi/4}\in L^2(0,T; H^1(\T_L))$ and  $\theta^{-\frac{3}{2}}fe^{s\vphi}\in L^2(0,T; L^2(\T_L))$, then $r\in L^2(0,T;H^3(\T_L))$ and
    $$
    \|re^{3s\Phi/4}\|_{L^2(H^3)}\leq C\left(\|fe^{3s\Phi/4}\|_{L^2(H^1)}+\|\theta^{-\frac 32}fe^{s\vphi}\|_{L^2(L^2)}+\|r_0e^{s\Phi(0)}\|_{H^2}\right).
    $$
\end{enumerate}
\end{lem}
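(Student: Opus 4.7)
The proof is a straightforward adaptation to the complex-coefficient setting of \cite[Proposition 3.4]{LocalexactcontrollabilityforthetwoandthreedimensionalcompressibleNavierStokesequations}, itself based on \cite{LocalExactBoundaryControllabilityfortheCompressibleNavierStokesEquations}. Since $\partial_t-\zeta\Laplace$ remains parabolic when $\Re\zeta>0$, both the Carleman estimate of Lemma \ref{theorem 3.2 EGG} and the classical maximal parabolic regularity continue to hold; the remainder of the proof is purely an algebraic manipulation of weights, which then goes through unchanged. We only sketch the main steps.

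Set $w:=re^{3s\Phi/4}$. Since $\Phi$ depends only on time, $w$ solves
$$
\partial_t w-\zeta\Laplace w=e^{3s\Phi/4}f+e^{3s\Phi/4}\chi_0 v_r+\tfrac{3s}{4}\Phi'(t)\,w\quad\text{in }(0,T)\times\T_L,
$$
with $w|_{t=0}=r_0e^{3s\Phi(0)/4}$ and $w|_{t=T}=0$ in the $L^2$-sense, the latter being a consequence of the finiteness of $\|re^{s\vphi}\|_{L^2(L^2)}$ given by \eqref{3.10 EGG}. The bound \eqref{3.10 EGG} is expressed with the weight $e^{s\vphi}$, while the estimates we aim for only involve the coarser weight $e^{3s\Phi/4}$. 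The elementary identity
$\vphi-\tfrac{3}{4}\Phi=\theta\bigl(\tfrac{1}{4}\lambda e^{12\lambda}-e^{\lambda\psi}\bigr)\geq C_\lambda\,\theta>0$,
valid for $\lambda$ large enough, shows that $e^{-s(\vphi-3\Phi/4)}$ decays exponentially in $\theta$ and therefore absorbs every polynomial factor of $\theta$, $\theta'$ or $s$ that will appear. In particular, the $L^2(L^2)$-bounds on $r$, $\nabla r$ and $\chi_0 v_r$ contained in \eqref{3.10 EGG} immediately translate into $L^2(L^2)$-bounds for $w$, $\nabla w$ and $e^{3s\Phi/4}\chi_0 v_r$, with a right-hand side controlled by that of \eqref{3.10 EGG}.

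For Part 1, we exploit the representation $v_r=\theta^3 e^{-2s\vphi}\chi_0\,q$ arising from the duality argument used to construct $(r,v_r)$ in Theorem \ref{theorem 3.3 EEG}, where $q$ is the adjoint state on which Lemma \ref{theorem 3.2 EGG} is applied. This yields
$$
\chi_0 v_r e^{3s\Phi/4}=\chi_0^2\theta^3 e^{-2s\vphi+3s\Phi/4}\,q,
$$
and the prefactor $\chi_0^2\theta^3 e^{-2s\vphi+3s\Phi/4}$ together with its spatial derivatives up to order two is uniformly bounded, again by the exponential-beats-polynomial mechanism. The claimed $L^2(H^2)$-bound on $\chi_0 v_r e^{3s\Phi/4}$ then reduces to an $L^2(H^2)$-estimate on $q$, which follows from Lemma \ref{theorem 3.2 EGG} combined with classical interior parabolic regularity applied to the adjoint equation for $q$, in complete analogy with the real-coefficient case treated in \cite{LocalexactcontrollabilityforthetwoandthreedimensionalcompressibleNavierStokesequations,LocalExactBoundaryControllabilityfortheCompressibleNavierStokesEquations}.

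For Part 2, we use the equation satisfied by $w$. Granted Part 1, each term of the source belongs to $L^2(H^1)$: $e^{3s\Phi/4}f$ by hypothesis, $e^{3s\Phi/4}\chi_0 v_r$ by Part 1, and the lower-order term $\tfrac{3s}{4}\Phi'(t) w$ by the $L^2(H^1)$-control on $w$ derived from \eqref{3.10 EGG} (the unbounded factor $\Phi'$ being again absorbed by the exponential gap). Since the hypothesis $r_0\in H^2(\T_L)$ yields $w|_{t=0}=r_0 e^{3s\Phi(0)/4}\in H^2(\T_L)$, the standard maximal parabolic regularity for the complex heat operator $\partial_t-\zeta\Laplace$ (valid because $\Re\zeta>0$) gives $w\in L^2(H^3)\cap H^1(H^1)\cap \mathcal{C}([0,T];H^2)$, with the desired bound. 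The main technical obstacle throughout is the careful bookkeeping of the multiple Carleman weights, which is systematically handled by the key exponential gap $\vphi-\tfrac{3}{4}\Phi\geq C_\lambda\,\theta>0$.
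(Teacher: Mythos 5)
The paper does not actually write out a proof of this lemma --- it refers to it as a ``straightforward adaptation'' of Proposition~3.4 of \cite{LocalexactcontrollabilityforthetwoandthreedimensionalcompressibleNavierStokesequations} (and Proposition~3.4 of \cite{Localcontrollabilitytotrajectoriesfornon-homogeneousincompressibleNavier-Stokesequations}). Your sketch reproduces exactly that strategy and gets the three key ingredients right: the quantitative weight gap $\varphi-\tfrac34\Phi=\theta\bigl(\tfrac14\lambda e^{12\lambda}-e^{\lambda\psi}\bigr)\geq C_\lambda\theta>0$ (which, since $\theta\geq1$ and $\theta\to\infty$ at $t=T$, lets the exponential factor $e^{-s(\varphi-3\Phi/4)}$ absorb any polynomial in $\theta$, $\theta'$ or $s$); the explicit penalized representation $v_r\sim\theta^3 e^{-2s\varphi}\chi_0 q$ of the control, so that Part~1 rests on weighted regularity of the adjoint minimizer $q$; and maximal parabolic regularity applied to $w:=re^{3s\Phi/4}$, whose source is controlled by Part~1 plus the exponential gap, for Part~2.

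One imprecision worth flagging in Part~1: the phrase ``reduces to an $L^2(H^2)$-estimate on $q$'' is slightly misleading --- $q$ is only in a \emph{Carleman-weighted} $L^2(H^2)$ space, not in $L^2(H^2)$ globally, and the prefactor $\chi_0^2\theta^3 e^{-2s\varphi+3s\Phi/4}$ being bounded with two spatial derivatives is not by itself enough. What is really done is to split the prefactor, e.g. writing $\chi_0^2\theta^3 e^{-2s\varphi+3s\Phi/4}q=\bigl(\chi_0^2\theta^{3/2}e^{-s\varphi+3s\Phi/4}\bigr)\bigl(\theta^{3/2}e^{-s\varphi}q\bigr)$, where the first factor and its derivatives are bounded by the exponential gap and the second is exactly what Lemma~\ref{theorem 3.2 EGG} controls at the $L^2$ and $H^1$ level; the missing $H^2$ piece for $q$ then comes from the adjoint parabolic equation satisfied by $q$, whose source involves $r$ and is controlled by Theorem~\ref{theorem 3.3 EEG}. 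This is a matter of exposition rather than a gap, and the argument as you outline it does go through; it is also precisely the route taken in the references the paper invokes.
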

For later use, we also present the following results, which contain a shift in the weight $\theta$ compared to Theorem \ref{theorem 3.3 EEG} and Lemma \ref{proposition 3.4 EGG}. The following results are modified versions of Theorem 3.3 of \cite{LocalExactBoundaryControllabilityfortheCompressibleNavierStokesEquations} which also can be adapted to the case of complex coefficients heat equation.
\begin{theo}
\label{theorem 3.3 M}
Let $T>0$. Assume that $\zeta$ satisfies \eqref{2.2 BEG complex}. There exist a positive constant $C$ and a real number $s_0\geq 1$ such that for all $s\geq s_0$ and for all $f$ satisfying
\begin{equation}
\label{3.9 EGG version Jordan}
\|\theta^{-1}f e^{s\vphi}\|_{L^2(0,T;L^2(\T_L))}<+\infty
\end{equation}
and $r_0\in H^1(\T_L)$, the solution $(r,v_r)$ of the control problem \eqref{(*)}-\eqref{(**)} satisfies
\begin{align}
s^{\frac{3}{2}}\|\theta^{\frac 12}re^{s\vphi}&\|_{L^2(L^2)}+\|\theta^{-1}\chi_0 v_{r} e^{s\vphi}\|_{L^2(L^2)}+s^{\frac 12}\|\theta^{-\frac 12}\nabla re^{s\vphi}\|_{L^2(L^2)}+s^{-\frac 12}\|\theta^{-\frac 32}\nabla^2 re^{s\vphi}\|_{L^2(L^2)}\nonumber\\
&\leq C\left(\|\theta^{-1}f e^{s\vphi}\|_{L^2(L^2)}+s^{\frac 12}\|r_0e^{s\vphi(0)}\|_{L^2}+s^{-\frac{1}{2}}\|\nabla r_0e^{s\vphi(0)}\|_{L^2}\right)\label{46 M}.
\end{align}
Moreover, the solution $(r,v_r)$ can be obtained through a linear operator in $(r_0,f)$.
\end{theo}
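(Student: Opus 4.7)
The statement is a weight-shifted version of Theorem \ref{theorem 3.3 EEG}, supplemented by an $H^2$ bound on $r$ with a $\|\nabla r_0\|$ contribution on the right-hand side. The proof follows the scheme of \cite{LocalExactBoundaryControllabilityfortheCompressibleNavierStokesEquations}, where the real-coefficient analogue is established, the only new ingredient being the complex coefficient $\zeta$. Since Lemma \ref{theorem 3.2 EGG} already provides the complex-coefficient Carleman estimate, the strategy of \cite{LocalExactBoundaryControllabilityfortheCompressibleNavierStokesEquations} carries over: derive a shifted Carleman estimate, dualize it into a control statement covering the $r$, $\nabla r$ and $v_r$ weighted bounds, then upgrade to the $\nabla^2 r$ bound via a weighted parabolic energy estimate.

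\textbf{Step 1 (shifted Carleman).} Apply Lemma \ref{theorem 3.2 EGG} to the rescaled function $\tilde w := \theta^{-1/2}w$. A direct computation gives $(-\partial_t - \overline{\zeta}\Laplace)\tilde w = \theta^{-1/2}(-\partial_t - \overline{\zeta}\Laplace)w + \tfrac{\theta'}{2\theta^{3/2}}w$, and the commutator term $\tfrac{\theta'}{2\theta^{3/2}}w$ is a lower-order perturbation, absorbable for $s$ large using $|\theta'|\leq \theta^2$ on the singular region $[T-T_1,T)$ (where $\theta(t)=1/(T-t)$ and hence $\theta'=\theta^2$) and the exponentially small factor $e^{-s\vphi}$ on $[0,T_0]$ (where $\vphi \geq \tfrac{3}{4}\lambda e^{12\lambda}$). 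One thus obtains
\begin{equation*}
s^{3/2}\|\theta w e^{-s\vphi}\|_{L^2(L^2)} + s^{1/2}\|\nabla w e^{-s\vphi}\|_{L^2(L^2)} + s\|w(0)e^{-s\vphi(0)}\|_{L^2} \leq C\Bigl(\|\theta^{-1/2}(-\partial_t - \overline{\zeta}\Laplace)w e^{-s\vphi}\|_{L^2(L^2)} + s^{3/2}\|\theta\chi_0 w e^{-s\vphi}\|_{L^2(L^2)}\Bigr),
\end{equation*}
whose weights are precisely dual to those targeted in \eqref{46 M}.

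\textbf{Step 2 (duality).} Following the scheme underlying Theorem \ref{theorem 3.3 EEG}, the shifted Carleman of Step 1 is used to minimize a convex quadratic functional on the adjoint side, penalizing $\|\theta^{-1/2}(-\partial_t-\overline{\zeta}\Laplace)w e^{-s\vphi}\|^2$ and $s^3\|\theta\chi_0 w e^{-s\vphi}\|^2$, plus linear terms pairing $r_0$ with $w(0)$ and $f$ with $w$. Coercivity (from Step 1) and strict convexity yield a unique minimizer $w_{\min}$. Its Euler–Lagrange equation identifies a control $v_r \sim s^3\theta^2\chi_0 \overline{w_{\min}}\,e^{-2s\vphi}$ and a state $r \sim \theta^{-1}e^{-2s\vphi}\overline{(-\partial_t-\overline{\zeta}\Laplace)w_{\min}}$ solving \eqref{(*)}--\eqref{(**)}, depending linearly on $(r_0,f)$, and satisfying the bounds on $\theta^{1/2}r$, $\theta^{-1/2}\nabla r$ and $\theta^{-1}\chi_0 v_r$ in \eqref{46 M} with right-hand side $C(\|\theta^{-1}f e^{s\vphi}\|_{L^2(L^2)} + s^{1/2}\|r_0 e^{s\vphi(0)}\|_{L^2})$.

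\textbf{Step 3 ($H^2$ regularity, and main obstacle).} The bound $s^{-1/2}\|\theta^{-3/2}\nabla^2 r e^{s\vphi}\|$ is not provided by the duality. Rewriting $\zeta\Laplace r = \partial_t r - f - v_r\chi_0$, elliptic regularity on $\T_L$ reduces the task to bounding $\|\theta^{-3/2}\Laplace r e^{s\vphi}\|$. Testing the equation against $-\Laplace \overline{r}\cdot \theta^{-3}e^{2s\vphi}$ and taking the real part, integration by parts in time yields
\begin{equation*}
\Re(\zeta)\|\theta^{-3/2}\Laplace r e^{s\vphi}\|_{L^2(L^2)}^2 \lesssim \|\nabla r_0\|_{L^2}^2\theta(0)^{-3}e^{2s\vphi(0)} + \Bigl|\int_0^T\!\!\!\int_{\T_L}|\nabla r|^2\partial_t(\theta^{-3}e^{2s\vphi})\,dx\,dt\Bigr| + \|\theta^{-1}(f+v_r\chi_0)e^{s\vphi}\|_{L^2(L^2)}^2,
\end{equation*}
where the $t=T$ boundary contribution vanishes thanks to $r(T)=0$. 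The first term yields exactly the $s^{-1/2}\|\nabla r_0 e^{s\vphi(0)}\|$ contribution of \eqref{46 M} (since $\theta(0)$ is a fixed constant); the second is controlled by $\|\theta^{-1/2}\nabla r e^{s\vphi}\|^2$ from Step 2 after noting $|\partial_t(\theta^{-3}e^{2s\vphi})|\lesssim s\theta^{-1}\lambda e^{12\lambda}e^{2s\vphi}$ and choosing $s_0$ large; the third is controlled by assumption and Step 2. The main technical obstacle is the careful absorption of the commutator in Step 1 and of the cross-terms in Step 3, both complicated by the complex coefficient $\zeta$, which generates real/imaginary cross-terms in the energy identity; however, since Lemma \ref{theorem 3.2 EGG} already handles the complex case and $\Re(\zeta)>0$ provides the coercivity needed in Step 3, these can be managed just as in \cite{LocalExactBoundaryControllabilityfortheCompressibleNavierStokesEquations}.
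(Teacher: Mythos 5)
Your overall plan — shift the weight in the Carleman estimate of Lemma \ref{theorem 3.2 EGG}, dualize via a HUM-type minimization, then upgrade to higher derivatives by weighted energy estimates — is indeed the path the paper delegates to its references. Step 1 is essentially correct, although the absorption of the commutator $\tfrac{\theta'}{2\theta^{3/2}}w$ on $[0,T_0]$ does not come from ``$e^{-s\vphi}$ being exponentially small'' (that factor appears on both sides of the inequality); it comes from $|\theta'|\leq m/T_0\lesssim_{\lambda} s$ together with $s\leq s^{3/2}\theta^{5/2}$ for $s$ large. The bounds on $\theta^{1/2}r$ and $\theta^{-1}\chi_0 v_r$ in Step 2 follow correctly from the Euler--Lagrange identity combined with the shifted Carleman estimate.

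There is, however, a genuine gap in Step 2 that propagates into Step 3. You assert that the HUM construction also delivers the $\theta^{-1/2}\nabla r$ bound, but differentiating $r=\theta^{-1}e^{-2s\vphi}\bigl(-\partial_t-\overline{\zeta}\Laplace\bigr)w_{\min}$ produces $\theta^{-1}e^{-2s\vphi}\nabla\bigl(-\partial_t-\overline{\zeta}\Laplace\bigr)w_{\min}$, and the Carleman estimate gives no control whatsoever on the gradient of $(-\partial_t-\overline{\zeta}\Laplace)w_{\min}$. The $\nabla r$ bound must instead be obtained by a weighted energy estimate of the same flavor as Step 3: test the controlled equation against $\overline{r}\,\theta^{-1}e^{2s\vphi}$, use $\Re(\zeta)>0$ for coercivity, and absorb the cross term $2s\Re\bigl(\zeta\,\overline{r}\,\nabla r\cdot\nabla\vphi\bigr)\theta^{-1}e^{2s\vphi}$ by Young's inequality against the already-known $\theta^{1/2}r$ bound. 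This must be done \emph{before} Step 3, since you invoke it there. In addition, the energy identity displayed in Step 3 omits a cross term: integrating $-\partial_t r\,\Laplace\overline{r}\,\theta^{-3}e^{2s\vphi}$ by parts in space also generates $2s\,\partial_t r\,(\nabla\vphi\cdot\nabla\overline{r})\,\theta^{-3}e^{2s\vphi}$, in which $\partial_t r$ must be replaced by $\zeta\Laplace r+f+v_r\chi_0$ and the result absorbed by Young's inequality (partially into $\Re(\zeta)\|\theta^{-3/2}\Laplace r\,e^{s\vphi}\|^2$ and partially using the first-order bound); this term is not negligible. Finally, your estimate $|\partial_t(\theta^{-3}e^{2s\vphi})|\lesssim s\theta^{-1}\lambda e^{12\lambda}e^{2s\vphi}$ is not sharp on $[0,T_0]$, where $|\theta'|$ is of order $m/T_0=s\lambda^2e^{2\lambda}/T_0$; the extra $\lambda$-dependent factors are harmless because $\lambda$ is fixed, but they should be acknowledged. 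With these corrections the argument goes through.
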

\begin{lem}
\label{proposition 3.4 EGG version Jordan}
Let $T>0$. Consider the solution $(r,v_r)$ constructed in Theorem \ref{theorem 3.3 M}. Then, with the above notations, for some constant $C>0$ independent of $s$, we have the following properties:
\begin{enumerate}
    \item ($H^2$ \text{regularity of the control}) $v_r\in L^{2}(0,T;H^2(\T_L))$ and
    $$
    \|\chi_0v_re^{3s\Phi/4}\|_{L^2(H^2)}\leq C\left(\|\theta^{-1}f e^{s\vphi}\|_{L^2(L^2)}+\|r_0e^{s\Phi(0)}\|_{L^2}\right).
    $$
    \item ($H^3$ \textit{regularity estimate for the state}) if $r_0\in H^2(\T_L)$, $fe^{3s\Phi/4}\in L^2(0,T; H^1(\T_L))$ and  $\theta^{-1}fe^{s\vphi}\in L^2(0,T; L^2(\T_L))$, then $r\in L^2(0,T;H^3(\T_L))$ and
    $$
    \|re^{3s\Phi/4}\|_{L^2(H^3)}\leq C\left(\|fe^{3s\Phi/4}\|_{L^2(H^1)}+\|\theta^{-1}fe^{s\vphi}\|_{L^2(L^2)}+\|r_0e^{s\Phi(0)}\|_{H^2}\right).
    $$
\end{enumerate}
\end{lem}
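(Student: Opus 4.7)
The plan is to adapt the proof of Lemma \ref{proposition 3.4 EGG} (itself modelled on \cite{LocalexactcontrollabilityforthetwoandthreedimensionalcompressibleNavierStokesequations}) by replacing the Carleman weights with those provided by Theorem \ref{theorem 3.3 M} and by checking that the complex parameter $\zeta$ causes no new difficulty, since $\mathrm{Re}(\zeta)>0$ so the operator $\partial_t - \zeta\Delta$ still satisfies maximal $L^2$ parabolic regularity. The key observation is that if we set $R := r\,e^{3s\Phi/4}$, then, because $\Phi$ depends only on time, $R$ solves
\begin{equation*}
\partial_t R - \zeta \Delta R = \chi_0 v_r\,e^{3s\Phi/4} + f\,e^{3s\Phi/4} + \tfrac{3s}{4}\,\Phi'(t)\,R \qquad \text{in } (0,T)\times\T_L,
\end{equation*}
with $R(0) = r_0\,e^{3s\Phi(0)/4}$ and $R(T) = 0$. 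Using the pointwise bounds $\theta \geq 1$ and $3\Phi/4 \leq \varphi$ from \eqref{2.41 EGG}--\eqref{minoration de theta}, Theorem \ref{theorem 3.3 M} directly provides $L^2(L^2)$ bounds on $R$, $\nabla R$, $\nabla^2 R$ and on $\chi_0 v_r e^{3s\Phi/4}$ in terms of the right-hand side of \eqref{46 M}.

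For part (1), I would pick a cutoff $\eta\in\mathcal{C}^\infty(\T_L)$ supported in $\omega$ with $\eta \equiv 1$ on a neighborhood of $\mathrm{supp}(\chi_0)$ and apply maximal parabolic regularity to the equation satisfied by $\eta R$. Its source consists of $\eta$ times the right-hand side above and of commutators of the Laplacian with $\eta$ acting on $R$ and $\nabla R$, all of which lie in $L^2(L^2)$ thanks to the low-order bounds just mentioned; this yields $\eta R\in L^2(H^2)\cap H^1(L^2)$. Isolating the control from the PDE, namely $\chi_0 v_r\,e^{3s\Phi/4} = \partial_t R - \zeta\Delta R - f\,e^{3s\Phi/4} - \tfrac{3s}{4}\Phi'(t) R$, and using $\chi_0\eta = \chi_0$, we deduce the announced $L^2(H^2)$ bound on $\chi_0 v_r e^{3s\Phi/4}$.

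For part (2), under the additional hypotheses $r_0 \in H^2(\T_L)$ and $f\,e^{3s\Phi/4}\in L^2(H^1)$, the bound of part (1) shows $\chi_0 v_r e^{3s\Phi/4}\in L^2(H^1)$, and one verifies that $\tfrac{3s}{4}\Phi'(t) R$ also lies in $L^2(H^1)$ using the bounds on $\nabla R$ and $\nabla^2 R$ obtained above; the polynomial blow-up of $\Phi'$ as $t\to T$ is absorbed by the exponential gain contained in $e^{s(\varphi - 3\Phi/4)}$ appearing in \eqref{46 M}, which is quantitatively positive for $\lambda$ large thanks to $\psi\leq 7$ and \eqref{2.41 EGG}. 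Maximal $L^2$-regularity for the complex heat operator then upgrades $R$ to $L^2(H^3)\cap H^1(H^1)$, which is the desired estimate. The main obstacle in this plan is precisely this careful absorption of the singular weight $\Phi'(t)$ and of the commutator terms generated by the spatial cutoff; all other steps are routine adaptations of the arguments in \cite{LocalexactcontrollabilityforthetwoandthreedimensionalcompressibleNavierStokesequations} and \cite{LocalExactBoundaryControllabilityfortheCompressibleNavierStokesEquations}.
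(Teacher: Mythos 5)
Your plan for part (2) is reasonable, but there is a genuine gap in the way you derive part (1), and since part (2) relies on part (1) the gap propagates.

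Setting $R = r\,e^{3s\Phi/4}$, applying a spatial cutoff $\eta$, and using maximal parabolic regularity, you correctly conclude $\eta R\in L^2(H^2)\cap H^1(L^2)$. But this only gives $\partial_t(\eta R)$ and $\Delta(\eta R)$ in $L^2(L^2)$, not in $L^2(H^2)$. When you then "isolate the control" by writing $\chi_0 v_r\,e^{3s\Phi/4}=\partial_t R-\zeta\Delta R - f\,e^{3s\Phi/4}-\tfrac{3s}{4}\Phi' R$, the right-hand side is (on $\mathrm{supp}(\chi_0)$, where $\eta\equiv 1$) a sum of quantities you have only placed in $L^2(L^2)$: $(\partial_t-\zeta\Delta)(\eta R)\in L^2(L^2)$, and $f\,e^{3s\Phi/4}\in L^2(L^2)$ only (the standing hypothesis is $\theta^{-1}f\,e^{s\vphi}\in L^2(L^2)$, nothing more). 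So this argument yields $\chi_0 v_r\,e^{3s\Phi/4}\in L^2(L^2)$, not the asserted $L^2(H^2)$. No amount of bootstrapping on the equation fixes this, because the control itself enters the equation as a source and $f$ has no extra smoothness.

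The missing ingredient is that the control in Theorem \ref{theorem 3.3 M} (as in Theorem \ref{theorem 3.3 EEG}) is not an arbitrary null control; it is the one produced by the Fursikov--Imanuvilov/HUM construction, and it therefore has the explicit form $v_r = -\,\rho(t,x)\,\chi_0\,\hat z$, where $\rho$ is a smooth spatial weight built from $\theta$ and $e^{-2s\vphi}$ (only the powers of $\theta$ differ from the ones in Theorem \ref{theorem 3.3 EEG}, which is the "shift" discussed in the text), and $\hat z$ solves an auxiliary backward heat equation driven by a weighted version of $r$. The $L^2(H^2)$ bound on $\chi_0 v_r e^{3s\Phi/4}$ comes from maximal parabolic regularity applied to $\hat z$ together with the smoothness of $\rho$ and the Carleman bound on the driving term; that is precisely what the proof of Proposition 3.4 of the cited reference does, and what the paper means by a "straightforward adaptation". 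Your regularity argument should therefore be run on the auxiliary adjoint state $\hat z$, not on $R$; once $\chi_0 v_r e^{3s\Phi/4}\in L^2(H^2)$ is obtained this way, your treatment of part (2) (feeding the improved control regularity, the assumption $fe^{3s\Phi/4}\in L^2(H^1)$, and the absorption of $\Phi'$ via the exponential gap between $\vphi$ and $\tfrac34\Phi$ into one more round of maximal regularity for $R$) is sound.
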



\section{Strategy}\label{Sec-Strategy} In order to perform a perturbative argument by fixed-point, we first recast the system into a more user-friendly shape. 
Let us set
\begin{equation}
  \label{definition of a}  
a:=\frac{\rho}{\rho_{\star}}-1.
\end{equation}
We are then led to study the following system
\begin{equation}
\label{NSK quantique}
\left \{
\begin{array}{lcr}

     \partial_t a + \divergence(u)=f_{a}(a,u)+v_a\mathbf{1}_{\omega}\ \ \ \ \ \ \ \ \ \ \ \ \ \ \ \ \ \ \ \ \ \ \ \ \ \ \ \ \ \ \ \ \ \ \ \ \ \ \ \qquad\qquad\ \ \ \text{in}\ \ (0,T)\times\T_L,\\ 
      \partial_t u-\mu_{\star} \Laplace{u}-(\mu_{\star}+\nu_{\star})\nabla\divergence (u) +p_{\star}\nabla a-\kappa_{\star}\nabla\Laplace{a}=f_{u}(a,u)+v_u\mathbf{1}_{\omega}\ \ \text{in}\ \ (0,T)\times\T_L,
     
\end{array}
\right.
\end{equation}
where 
\begin{equation*}
\kappa_{\star}:=\rho_{\star}\kappa(\rho_{\star}),
\quad 
\mu_{\star}:=\rho_{\star}^{-1}\mu(\rho_{\star}),
\quad
 \nu_{\star}:=\rho_{\star}^{-1}\nu(\rho_{\star}),
\quad p_{\star}:=P^{'}(\rho_{\star})
\end{equation*}
and
\begin{equation}
\label{definition non linearite}
\left \{
\begin{array}{lcr}
  f_{a}(a,u) :=-u\cdot \nabla a, \\
  f_{u}(a,u) :=f_{u}^{1}(a,u)+f_{u}^{2}(a,u)+f_{u}^{3}(a,u)+f_{u}^{4}(a)+f_{u}^{5}(a),
\end{array}
\right.
\end{equation}
with
\begin{equation*}
\begin{cases}
f_{u}^{1}(a,u):=-(a+1)u\cdot\nabla u,\\
\displaystyle f^{2}_{u}(a,u):=[\divergence\left(2\underline{\mu}(a)D_S(u))+\nabla(\underline{\nu}(a)\divergence{u})\right)],\\
\displaystyle f_{u}^{3}(a,u):=(\partial_tu) a,\\
\displaystyle f_{u}^{4}(a):= \underline{P}'(a)\nabla a,\\
\displaystyle f_{u}^{5}(a):=(a+1)\nabla\Big(\underline{\kappa}(a) \Laplace a+\nabla\underline{\kappa}(a)\cdot\nabla a\Big)
\end{cases}
\end{equation*}
and
\begin{equation*}
\begin{cases}
\underline{\kappa}(a):=\rho_{\star}(\kappa(\rho_{\star}a+\rho_{\star})-\kappa(\rho_{\star})),\\
\displaystyle \underline{\mu}(a):=\rho_{\star}^{-1}(\mu(\rho_{\star}a+\rho_{\star})-\mu(\rho_{\star})),\\
\displaystyle \underline{\nu}(a):=\rho_{\star}^{-1}(\nu(\rho_{\star}a+\rho_{\star})-\nu(\rho_{\star})),\\
\displaystyle \displaystyle \underline{\nu}(a):=P^{'}(\rho_{\star}a+\rho_{\star})-P^{'}(\rho_{\star}).
\end{cases}
\end{equation*}
This form of System \eqref{eqNSKTor} is more convenient since the linearized equations around $(\rho_{\star},0)$ explicitly appear in the left-hand side of \eqref{NSK quantique}. 

Also note that we can recover strong solutions of \eqref{eqNSK} from strong solutions of \eqref{NSK quantique} by using \eqref{definition of a} under the form
$$
\rho=\rho_{\star}a+\rho_{\star}.
$$
This leads to studying the distributed null controllability problem associated to \eqref{NSK quantique} on $\T_L$. Then we consider the following control problem: \textit{Given $(a_0,u_0)$ small enough, find control functions $(v_a,v_u)$ on $(0,T)\times\T_L$ such that the solution $(a,u)$ of} 
\begin{equation}
\label{NSK quantique internal control}
\left \{
\begin{array}{lcr}

     \partial_t a + \divergence(u)=f_a(a,u)+v_a\mathbf{1}_{\omega}\ \ \, \, \, \ \ \ \ \ \ \qquad\qquad\qquad\qquad\qquad\qquad\qquad\, \, \ \textit{in}\ (0,T)\times\T_{L}, \\ 
      \partial_t u-\mu_{\star} \Laplace{u}-(\mu_{\star}+\nu_{\star})\nabla\divergence (u) +p_{\star}\nabla a-\kappa_{\star}\nabla\Laplace{a}=f_u(a,u)+v_u\mathbf{1}_{\omega}\ \,\textit{in}\ (0,T)\times\T_{L},
     
\end{array}
\right.
\end{equation}
satisfies 
\begin{equation}
    \label{condition final}
    (a,u)_{|_{t=T}}=(0,0)\ \ \ \textit{in}\ \T_{L}.
\end{equation}
Then, Theorem \ref{main resultmain} is equivalent to the following theorem.\\
\begin{theo}
\label{Controle intern de NSK quantique sur le tore}
Let $d\in\{1,2,3\}$ and $T>0$. There exists $\delta>0$ such that, for all $(a_0,u_0)\in H^2(\T_L)\times H^1(\T_L)$ satisfying 
\begin{equation}
\label{condition de petitesse pour donne initial tore}
    \|(a_0,u_0)\|_{H^2\times H^1}\leq \delta,
\end{equation}
there exist control functions $(v_a,v_u)$ in $L^2(0,T;H^2(\T_L))\times L^2(0,T;H^1(\T_L))$ and a corresponding controlled trajectory $(a,u)$ solving \eqref{NSK quantique internal control} with initial data $(a_0,u_0)$ and satisfying the control requirement \eqref{condition final}. 
Besides, the controlled trajectory $(a,u)$ enjoys the following regularity 
\begin{align*}
	& a \in \mathcal{C}([0,T]; H^2(\T_L))\cap L^2(0,T;H^3(\T_L))\cap H^1(0,T;H^1(\T_L)),
	\\
	& u \in \mathcal{C}([0,T]; H^1(\T_L))\cap L^2(0,T;H^2(\T_L))\cap H^1(0,T;L^2(\T_L)).
\end{align*}
and the following bound from below
$$
	\inf_{(t,x) \in [0,T] \times \T_L} a(t,x) > -1.
$$
\end{theo}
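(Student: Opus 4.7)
My plan is to deduce the nonlinear result Theorem \ref{Controle intern de NSK quantique sur le tore} from the linearized controllability result that will be established in Section \ref{Sec-Proof-Cont}, by means of a fixed-point argument exploiting the weighted estimates coming from the Carleman machinery of Section \ref{Sec-Background}. More precisely, let $\mathcal{L}$ denote the linear control operator which, to a pair of initial data $(a_0,u_0) \in H^2(\T_L)\times H^1(\T_L)$ and a pair of source terms $(g_a,g_u)$ lying in weighted $L^2$ spaces, associates a triple $(a,u,(v_a,v_u))$ solving the linearized system
\begin{equation*}
\left\{
\begin{array}{l}
\partial_t a + \divergence(u) = g_a + v_a \mathbf{1}_\omega,\\
\partial_t u - \mu_\star \Laplace u -(\mu_\star+\nu_\star)\nabla\divergence u + p_\star \nabla a - \kappa_\star \nabla \Laplace a = g_u + v_u\mathbf{1}_\omega,
\end{array}
\right.
\end{equation*}
with $(a,u)_{|_{t=T}}=(0,0)$ and satisfying suitable weighted estimates of the form
$
\|(a,u)\|_{X} + \|(v_a,v_u)\|_{V} \leq C\bigl(\|(a_0,u_0)\|_{H^2\times H^1} + \|(g_a,g_u)\|_{Y}\bigr),
$
where $X$, $V$, $Y$ are weighted Sobolev spaces built from the Carleman weights $\varphi$, $\Phi$ of Section \ref{Sec-Background}. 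The choice of $X$ must include the regularity $a\in \mathcal{C}([0,T];H^2)\cap L^2(H^3)\cap H^1(H^1)$ and $u\in\mathcal{C}([0,T];H^1)\cap L^2(H^2)\cap H^1(L^2)$ claimed in the statement, and the weights must be singular enough at $t=T$ that multiplication by bounded functions of $(a,u)$ maps $X$ into $Y$.

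Given such an $\mathcal{L}$, the fixed-point map is
$
\mathcal{F}(\bar a,\bar u):= \pi_{(a,u)}\,\mathcal{L}\bigl((a_0,u_0);\,f_a(\bar a,\bar u),\,f_u(\bar a,\bar u)\bigr),
$
where $\pi_{(a,u)}$ extracts the state. I would then show that, for $\delta>0$ sufficiently small and a suitable closed ball $B_R\subset X$ (of radius $R$ controlled by $\|(a_0,u_0)\|_{H^2\times H^1}$), the map $\mathcal{F}$ sends $B_R$ into itself and is a contraction. The key technical step is the estimation of the nonlinear terms: using the Sobolev embeddings $H^2(\T_L)\hookrightarrow L^\infty(\T_L)$ and $H^1(\T_L)\hookrightarrow L^6(\T_L)$ valid in dimensions $d\leq 3$, together with product laws in Sobolev spaces and the fact that the weights $e^{s\varphi}$ and $\theta^{-k}$ are uniformly bounded on compact subsets of $[0,T)$, one estimates each piece of $f_a$ and $f_u$. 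The most delicate contribution is $f_u^5(a)$, which involves $\nabla\Laplace a$ and therefore demands $a\in L^2(H^3)$; this is precisely why the linear controllability must be proved in the stronger weighted norm of Lemma \ref{proposition 3.4 EGG}, and it is the main reason for the choice of $H^2\times H^1$ for the initial data. The term $f_u^3(a,u)=(\partial_t u)a$ requires $a\in L^\infty(L^\infty)$, which is controlled by $a\in \mathcal{C}([0,T];H^2)$.

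The quadratic (or super-linear) nature of all nonlinear terms will give estimates of the form
$
\|f_a(\bar a,\bar u)\|_Y + \|f_u(\bar a,\bar u)\|_Y \leq C\|(\bar a,\bar u)\|_X^2(1+\|(\bar a,\bar u)\|_X),
$
so that on a ball of radius $R=\mathcal{O}(\delta)$, the image is of size $\mathcal{O}(\delta+\delta^2)$ and the Lipschitz constant is $\mathcal{O}(\delta)$, both $<R$ and $<1$ respectively for $\delta$ small. A Banach fixed-point argument then produces $(a,u)\in B_R$ solving \eqref{NSK quantique internal control}--\eqref{condition final} with the desired regularity and with $(v_a,v_u)\in L^2(H^2)\times L^2(H^1)$. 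Finally, the pointwise lower bound $\inf a > -1$ follows for $\delta$ small from $\|a\|_{\mathcal{C}([0,T];H^2)} \lesssim R \lesssim \delta$ and $H^2(\T_L)\hookrightarrow L^\infty(\T_L)$.

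The main obstacle I anticipate is the compatibility of the weighted functional framework with the nonlinear estimates: one must choose the weights on the source terms in $Y$ strictly stronger (more singular at $t=T$) than the weights controlling $(a,u)$ in $X$, so that the quadratic terms absorb the discrepancy, while still keeping the exponents compatible with the linear estimates from Lemma \ref{proposition 3.4 EGG version Jordan}. Balancing these weight budgets, and in particular ensuring that the capillary term $f_u^5$ (which uses three derivatives of $a$) can be estimated by the available $L^2(H^3)$-norm of $a$ with the correct weight, is the delicate point of the argument.
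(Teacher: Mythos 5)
Your proposal follows essentially the same route as the paper: a Banach fixed-point argument for the map $\mathcal{F}(\tilde a,\tilde u)=\mathcal{G}(a_0,u_0,f_a(\tilde a,\tilde u),f_u(\tilde a,\tilde u))$ on a weighted ball $B_R$, using the linear controllability operator $\mathcal{G}$ of Theorem \ref{theorem 4.1 EGG}, the quadratic nature of the nonlinearities, the product laws and $H^2(\T_L)\hookrightarrow L^\infty(\T_L)$ in dimension $d\le 3$, a composition lemma (Lemma \ref{I 31/08/2023}) for the coefficients $\underline\mu,\underline\nu,\underline\kappa,\underline{P'}$, and the Sobolev embedding again for the lower bound on $a$. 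You correctly isolate the two delicate terms, $f_u^5$ (needing $a\in L^2(H^3)$) and $f_u^3=(\partial_t u)\,a$ (needing $\partial_t u$ in the ball and $a\in L^\infty(L^\infty)$), and the weight-budget issue. The concrete resolution of the budget in the paper is worth noting: the ball $B_R$ carries the weight $e^{\frac{2s_0\Phi}{3}}$ (together with the time-derivative in $L^2(H^1)\times L^2(L^2)$ with the same weight), the linear estimate \eqref{4.2 EGG} demands the sources carry $e^{\frac{4s_0\Phi}{3}}$, and the identity $\frac{4}{3}=\frac{2}{3}+\frac{2}{3}$ makes the quadratic nonlinearities close exactly (giving $\|(f_a,f_u)e^{\frac{4s_0\Phi}{3}}\|\le CR^2$, and similarly a contraction constant $CR$). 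Your proposed bound with an extra factor $(1+\|\cdot\|_X)$ is harmless because the affine factor $(\tilde a+1)$ is controlled in $L^\infty$ by the radius constraint $KR_\star<\eta/\rho_\star$, so the paper simply folds it into the constant. In short, your plan is correct and matches the paper's proof.
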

To take into account the support of the control functions $(v_a,v_u)\mathbf{1}_{\omega}$ and facilitate regularity issues, we replace $\mathbf{1}_{\omega}$ by a smooth cut-off function $\chi\in\mathcal{C}^{\infty}_{c}(\omega, [0,1])$ satisfying
\begin{equation*}
\begin{array}{lcr}
  \chi= 1\ \text{on}\ \omega_1,
\end{array}
\end{equation*}
recall \eqref{I 02/11/2023} for the definition of $\omega_1$ compared to $\omega$. In other words, we consider the following control problem: \textit{Given $(a_0, u_0)$ small in $H^2(\T_L)\times
H^1(\T_L)$, find control functions $(v_a,v_u)$ in $L^2(H^1)\times L^2(L^2)$ such that the solution $(a,u)\in L^2(H^3)\times L^2(H^2)$ of}
\begin{equation}
\label{NSK quantique control}
\left \{
\begin{array}{lcr}
     \partial_t a + \divergence(u)=f_a(a,u)+v_a\chi\ \ \qquad\qquad\qquad\qquad\qquad\qquad\qquad\ \ \ \ \ \ \, \, \, \, \, \textit{in}\ (0,T)\times\T_{L}, \\ 
      \partial_t u- \mu_{\star} \Laplace{u}-(\mu_{\star}+\nu_{\star})\nabla\divergence (u) +p_{\star}\nabla a-\kappa_{\star}\nabla\Laplace{a}=f_u(a,u)+v_u\chi\ \textit{in}\ (0,T)\times\T_{L},      
\end{array}
\right.
\end{equation}
\textit{with initial data}
\begin{equation}
    \label{condition initial}
    (a,u)_{|_{t=0}}=(a_0,u_0)\ \ \textit{in}\ \T_{L},
\end{equation} 
\textit{satisfies} \eqref{condition final}.

Of course, this will be done by the analysis of the control properties of the linearized system
\begin{equation}
\label{NSK quantique Sl}
\left \{
\begin{array}{lcr}
     \partial_t a + \divergence(u)=f_a+v_a\chi\ \qquad\qquad\qquad\qquad\qquad\qquad\qquad\ \ \ \ \ \ \ \, \ \, \, \ \ \text{in}\ (0,T)\times\T_{L}, \\ 
      \partial_t u-\mu_{\star} \Laplace{u}-(\mu_{\star}+\nu_{\star})\nabla\divergence (u) +p_{\star}\nabla a-\kappa_{\star}\nabla\Laplace{a}=f_u+v_u\chi\ \ \, \text{in}\ (0,T)\times\T_{L},     
\end{array}
\right.
\end{equation}
where $f_a$ and $f_u$ are given\footnote{Note that index $a$ and $u$, aims to indicate in which equations $f_a$ and $f_u$ appear in the lines of the system according to the terms $\partial_ta $ and $\partial_t u$ respectively. In particular, $f_a$ and $f_u$ do not depend on $a$ and $u$ except if it explicitly appear, as in $f_a(a,u)$ or $f_u(a,u)$. In this article, we use similar notations for source terms in \eqref{NSK quantique Sl}, \eqref{NSK quantique Sl*}, \eqref{NSK quantique Sloxc*}, \eqref{NSK quantique Slcc*}, \eqref{NSK quantique Slcc**}, \eqref{NSK quantique Slcc* Jordan} and \eqref{NSK quantique Slcc** Jordan}.}. Namely, we establish the following theorem whose proof is given in Section 6.
\begin{theo}
\label{theorem 4.1 EGG}
Let $T>0$. There exist a real number $s_0\geq 1$ and a positive constant $C$, such that for any $(a_0,u_0)\in H^{2}(\T_L)\times H^1(\T_L)$ and $f_a,f_u$ such that $f_ae^{\frac{4s_0\Phi}{3}}\in L^2(0,T;H^1(\T_L))$ and $f_ue^{\frac{4s_0\Phi}{3}}\in L^2(0,T;L^2(\T_L))$, there exist two control functions $v_a$ and $v_u$ and a corresponding controlled trajectory $(a,u)$ solving \eqref{NSK quantique Sl} with initial data $(a_0,u_0)$, satisfying the controllability requirements \eqref{condition final} and depending linearly on the data $(a_0,u_0,f_a,f_u)$. Besides, we have the following estimate
\begin{align}
\|(\partial_ta,\partial_tu)&e^{\frac{2s_0\Phi}{3}}\|_{L^2(H^1)\times L^2(L^2)}+\|(a,u)e^{\frac{2s_0\Phi}{3}}\|_{L^2(H^3)\cap L^{\infty}(H^2)\times L^2(H^2)\cap L^{\infty}(H^1)}\nonumber\\
&+\|\chi  (v_a,v_u)  e^{\frac{3s_0\Phi}{4}}\|_{L^2(H^1)\times L^2(L^2)} \leq C\left(\|(f_a,f_u)e^{\frac{4s_0\Phi}{3}})\|_{L^2(H^1)\times L^2(L^2)}+\|(a_0,u_0)\|_{H^2\times H^1}\right)\label{4.2 EGG}.
\end{align}
This allows us to define a linear operator $\mathcal{G}$ defined on the space
$$
\enstq{(a_0,u_0,f_a,f_u)\in H^2\times H^1\times L^2(H^1)\times L^2(L^2)}{f_ae^{\frac{4s_0\Phi}{3}}\in L^2(H^1)\ \text{and}\ f_ue^{\frac{4s_0\Phi}{3}}\in L^2(L^2) },
$$
by 
$$
\mathcal{G}(a_0,u_0,f_a,f_u)=(a,u),
$$
where $(a,u)$ is the controlled trajectory, with initial condition $(a_0,u_0)$ and forces $(f_a,f_u)$, satisfying the control requirement \eqref{condition final} and Estimate \eqref{4.2 EGG}.
\end{theo}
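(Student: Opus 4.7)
The plan is to establish Theorem \ref{theorem 4.1 EGG} by combining, via duality, the observability of the adjoint to the linearized system \eqref{NSK quantique Sl} with the regularity results collected in Section \ref{Sec-Background}. Concretely, the adjoint involves a vector field $z$ and a scalar $\sigma$, and I would first apply the Leray projector to the adjoint velocity equation. The divergence-free component $\mathbb{P} z$ then solves a purely parabolic equation with coefficient $\mu_{\star}>0$, whose observability on $\omega$ is provided by Lemma \ref{theorem 3.2 EGG} applied with $\zeta=\mu_{\star}$. The remaining scalar subsystem, expressed for instance in the variables $\sigma$ and $\divergence z$ together with their Laplacians, is closed and has principal part exactly $^t\!A$ from \eqref{matrice A}.

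Next, I would invoke the results of Sections \ref{Sec-Diag} and \ref{Sec-NonDiag} to handle this scalar coupled subsystem. When $^t\!A$ is diagonalizable, a linear change of unknowns reduces the subsystem to two uncoupled heat equations with coefficients whose real parts are positive thanks to \textbf{(H1)}; the observability of each such equation is given by Lemma \ref{theorem 3.2 EGG}. When $^t\!A$ is non-diagonalizable, the change of variables produces a triangular system to be treated by cascade: observability for the lower line first, then observability for the upper line with the lower-line state playing the role of a source, again via Lemma \ref{theorem 3.2 EGG}. Summing these observability estimates yields a global observability estimate for the adjoint, from which the existence of the controls $(v_a,v_u)$ follows by the Fursikov--Imanuvilov penalization used in Theorem \ref{theorem 3.3 EEG}, together with the linearity of the construction.

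For the regularity estimates encoded in \eqref{4.2 EGG}, I would apply Lemmas \ref{proposition 3.4 EGG} and \ref{proposition 3.4 EGG version Jordan} to each scalar heat equation appearing after decoupling, and then reconstruct $u$ from $\mathbb{P} u$ and $\divergence u$, and $a$ from $-\Laplace a$, using elliptic regularity, after dealing separately with the zero-frequency mode on the torus whose dynamics is explicit. The main obstacle, in my view, is to synchronize the various exponential weights: Lemma \ref{theorem 3.2 EGG} yields weights of the form $e^{-s\vphi}$, Lemmas \ref{proposition 3.4 EGG} and \ref{proposition 3.4 EGG version Jordan} introduce the shifted weights $e^{3s\Phi/4}$ and $e^{s\vphi}$, while the target estimate \eqref{4.2 EGG} demands $e^{4s_0\Phi/3}$ on the source, $e^{3s_0\Phi/4}$ on the control and $e^{2s_0\Phi/3}$ on the state. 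Matching these exponents requires a careful choice of $s_0$, the comparison \eqref{2.41 EGG} between $\vphi$ and $\Phi$, and the bound $\theta\geq 1$ to absorb the lower-order terms generated by the decoupling procedure.
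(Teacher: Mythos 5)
Your proposal correctly identifies the global architecture (reduce to a heat equation for the velocity part, a closed $2\times 2$ scalar subsystem carrying $^t\!A$ for $(\sigma,\divergence z)$, handle the two spectral cases separately, conclude by duality), but it deviates from the paper's proof in two ways, one cosmetic and one substantial.

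The cosmetic difference is the use of the Leray projector. The paper never decomposes $z=\mathbb{P}z+\nabla\phi$: once observability of $(\sigma,q)$ with $q=\divergence z$ is in hand, it simply rewrites the full momentum equation as $-\partial_t z-\mu_\star\Laplace z=g_z+\nabla\sigma+(\mu_\star+\nu_\star)\nabla q$ and treats $\nabla\sigma+(\mu_\star+\nu_\star)\nabla q$ as a known right-hand side. This sidesteps the Helmholtz reconstruction of $z$ from $\mathbb{P}z$ and $\divergence z$, the associated elliptic inversion, and the bookkeeping of the zero Fourier mode that you flag; both routes are legitimate, but the paper's is lighter.

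The substantial gap concerns the Sobolev indices, not the exponential weights. You propose to prove observability of the decoupled scalar equations \emph{directly} from the Carleman estimate of Lemma \ref{theorem 3.2 EGG} and then to penalize à la Fursikov--Imanuvilov. That Carleman estimate, however, only yields weighted $L^2$ observability for $(y^+,y^-)$, hence by duality an $L^2$-controllability statement. The target estimate \eqref{4.2 EGG} requires a controlled trajectory in $L^2(H^3)\times L^2(H^2)$, which by duality forces observability of the adjoint in \emph{negative} index spaces, namely $L^2(H^{-1})$ for $(\sigma,q)$, $H^{-2}$ for the traces at $t=0$, and $L^2(H^{-3})\times L^2(H^{-2})$ for the sources. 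One cannot obtain such negative-index observability from the Carleman estimate alone. The paper circumvents this by going one more step of duality: Lemmas \ref{control Slcc**} and \ref{Control Slcc** Jordan} first prove \emph{controllability of the adjoint of the adjoint} in high regularity ($L^2(H^3)$ for the state, $L^2(H^1)$ or $L^2(H^2)$ for the control), which then, by a duality pairing written explicitly in Step 1 of Section \ref{Sec-Proof-Cont}, delivers the needed negative-index observability \eqref{changement inconnu dans inegalite de dualite}. Your phrase ``invoke the results of Sections \ref{Sec-Diag} and \ref{Sec-NonDiag}'' could point in that direction, but the subsequent appeal to Lemma \ref{theorem 3.2 EGG} for observability and to penalization for the control indicates a direct Carleman route, which by itself cannot reach \eqref{4.2 EGG}. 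Without this extra duality layer, the argument stalls at $L^2$ controllability and the higher-order norms on $(a,u)$ and the control $(v_a,v_u)$ are out of reach; the weight-matching issue you single out as the main obstacle is real but secondary.
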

Since System  \eqref{NSK quantique Sl} is linear, its controllability in $L^{2}(H^3)\times L^{2}(H^2)$ is equivalent to the following observability estimate
\begin{align*}
    \|\sigma e^{-s_0\Phi}\|_{L^2(H^{-1})}+\|&\sigma(0)e^{-s_0\Phi(0)}\|_{H^{-2}}+\|ze^{-\frac{4s_0\Phi}{3}}\|_{L^2(L^2)}+\|z(0)e^{-\frac{4s_0\Phi(0)}{3}}\|_{H^{-1}}\\
    &\lesssim \|(g_{\sigma},g_z)e^{-\frac{3s_0\Phi}{4}}\|_{L^2(H^{-3})\times L^2(H^{-2})}+\|\chi (\sigma,z)e^{-\frac{3s_0\Phi}{4}}\|_{L^2(H^{-1})\times L^2(L^2)},
\end{align*}
where $(\sigma,z)$ is a solution of the following adjoint system
\begin{equation}
\label{NSK quantique Sl*}
\left \{
\begin{array}{lcr}
     -\partial_t \sigma - p_{\star}\divergence(z)+\kappa_{\star}\Laplace\divergence(z)=g_{\sigma}\ \qquad\ \ \ \ \, \, \, \ \text{in}\ (0,T)\times\T_{L}, \\ 
      -\partial_t z- \nabla \sigma-\mu_{\star} \Laplace{z}-(\mu_{\star}+\nu_{\star})\nabla\divergence (z) =g_z\ \ \text{in}\ (0,T)\times\T_{L},
     
\end{array}
\right.
\end{equation}
with $(g_{\sigma},g_{z})\in L^2(H^{-1})\times L^2(L^2)$. 

The main idea to get this observability estimate for \eqref{NSK quantique Sl*} is based on the fact that, with 
$$
q:=\divergence(z),
$$
$(\sigma, q)$ satisfies the closed subsystem 
\begin{equation}
\label{NSK quantique Sloxc*}
\left \{
\begin{array}{lcr}

     -\partial_t \sigma - p_{\star} q+\kappa_{\star}\Laplace q=g_{\sigma}\ \ \qquad\ \ \, \ \text{in}\ (0,T)\times\T_{L}, \\ 
      -\partial_t q-\Laplace\sigma-(2\mu_{\star}+\nu_{\star})\Laplace q =g_q\ \, \text{in}\ (0,T)\times\T_{L},
     
\end{array}
\right.
\end{equation}
where $g_q :=\divergence(g_z)$.

We will thus base our analysis on the following observability estimate
\begin{align*}
    \|(\sigma,q)e^{-s_0\Phi}&\|_{L^2(H^{-1})\times L^2(H^{-1})}+\|(\sigma(0),q(0)) e^{-s_0\Phi(0)}\|_{H^{-2}\times H^{-2}} \nonumber\\
    & \lesssim \|(g_{\sigma},g_{z})e^{-\frac{3s_0\Phi}{4}}\|_{L^2(H^{-3})\times L^2(H^{-2})} +\|\chi_0(\sigma,q)e^{-\frac{3s_0\Phi}{4}}\|_{L^2(H^{-1})\times L^2(H^{-1})},\label{changement inconnu dans inegalite de dualite}
\end{align*}
for the solutions $(\sigma, q)$ of \eqref{NSK quantique Sloxc*}, where $\chi_0$ is the cut-off function in \eqref{I 02/11/2023}.

In order to do that and to underline the parabolic behavior of \eqref{NSK quantique Sloxc*}, we rely on the analysis of the matrix 
\begin{equation}
\label{La fameuse matrice}
\begin{pmatrix}
 0 & \kappa_{\star}  \\
-1 & -(2\mu_{\star}+\nu_{\star})
\end{pmatrix},
\end{equation}
which coincides with the matrix $^t\!A$, where $A$ is given in \eqref{matrice A}.

As said in the introduction, our analysis will then be divided into two cases: when $^t\!A$ is diagonalizable (equivalently $A$), and when $^t\!A$ (equivalently $A$) is not diagonalizable.

When $A$ is diagonalizable, System \eqref{NSK quantique Sloxc*} is a parabolic or parabolic/dispersive system in which the coupling is done through lower order terms. We can then use directly Lemma \ref{theorem 3.2 EGG} to obtain $L^2$ observability results for System \eqref{NSK quantique Sloxc*}. However, there still remains an additional difficulty to obtain observability results for System \eqref{NSK quantique Sloxc*} in negative index Sobolev spaces. This is done by duality by obtaining controllability results for the adjoint of System \eqref{NSK quantique Sloxc*} in spaces of higher regularity, based on Theorem \ref{theorem 3.3 EEG} and Lemma \ref{proposition 3.4 EGG}.

When $A$ is not diagonalizable, System \eqref{NSK quantique Sloxc*} is a parabolic system in which the coupling is done through the leading order. This does not prevent us to follow the same strategy, but one needs to perform a slight shift of a power of the function $\theta$ in the control results. This is why we introduced Theorem \ref{theorem 3.3 M} and Lemma \ref{proposition 3.4 EGG version Jordan}.

\subsection{Diagonalizable case}
We assume in this subsection that \eqref{La fameuse matrice} is diagonalizable. 
Then, setting 
\begin{equation*}
     \zeta_{+}:=\frac{(2\mu_{\star}+\nu_{\star})-D}{2}, \quad
      \zeta_{-} :=\frac{(2\mu_{\star}+\nu_{\star})+D}{2},
      \quad
      \text{ where \footnote{We use the convention $\sqrt{x}:=i\sqrt{-x}$ if $x\leq 0$.} } 
      D:=\sqrt{(2\mu_{\star}+\nu_{\star})^2-4\kappa_{\star}}, 
\end{equation*}
(note that $\Re(\zeta_{\pm})>0$ according to {\bf (H1)}), the matrix $^t\! A$ is equivalent to the diagonal matrix $\text{diag}(\zeta_+, \zeta_-)$. This can be done through the $2\times 2$ invertible matrix $Q$ given by 
\begin{equation}
	\label{Def-Q}
Q:=
\begin{pmatrix}
\displaystyle \frac{\zeta_{+}}{D} &\displaystyle  -\frac{\kappa_{\star}}{D} \smallskip\\
\displaystyle \frac{-\zeta_{+}}{D} &\displaystyle  \frac{\kappa_{\star}}{D} \\
\end{pmatrix}.
\end{equation}
In particular, $(\sigma, q)$ solves System \eqref{NSK quantique Sloxc*} if and only if the new unknowns 
$$
\begin{pmatrix}
y^{+} \\
y^{-} 
\end{pmatrix}:=Q
\begin{pmatrix}
\sigma
\\ 
q
\end{pmatrix},
$$
satisfy
\begin{equation}
\label{NSK quantique Slcc*}
\left \{
\begin{array}{lcr}
     -\partial_t y^{+} - \zeta_{+}\Laplace y^{+}=g_{y^{+}}+\alpha_1y^{+}+\alpha_2y^{-}\ \ \ \, \text{in}\ (0,T)\times\T_{L}, \\ 
      -\partial_t y^{-}-\zeta_{-}\Laplace y^{-} =g_{y^{-}}+\alpha_3y^{+}+\alpha_4y^{-}\ \ \ \,\text{in}\ (0,T)\times\T_{L},
      
\end{array}
\right.
\end{equation}
with
$$
\alpha_1:=\frac{\zeta_{+}\zeta_{-}p_{\star}}{(\zeta_{+}+\zeta_{-})\kappa_{\star}}, 
\quad 
\alpha_2:=\frac{\zeta_{-}^{2}p_{\star}}{(\zeta_{+}+\zeta_{-})\kappa_{\star}}, 
\quad
\alpha_3:=\frac{\zeta_{+}p_{\star}}{\zeta_{+}+\zeta_{-}}, 
\quad 
\alpha_4:=-\frac{\zeta_{+}p_{\star}}{\zeta_{+}+\zeta_{-}},
$$
 and 
$$
\begin{pmatrix}
g_{y^{+}} \\
g_{y^{-}} 
\end{pmatrix}:=Q
\begin{pmatrix}
g_{\sigma} \\
g_q 
\end{pmatrix}.
$$
The $L^2$-observability for this system is well-known and follows directly from the Carleman estimates for parabolic system (see \cite{ControllabilityofEvolutionequations} and \cite{ObservationandControlforOperatorSemigroups} Subsections 9.4, 9.5 and 9.6 for more comments). A part of the difficulty here is to obtain this observability estimate in negative index Sobolev spaces. To this aim, we will adapt the strategy from \cite{LocalexactcontrollabilityforthetwoandthreedimensionalcompressibleNavierStokesequations}, and prove the following controllability result in Sobolev spaces of strong regularity: 
 \\
 \textit{Given $(r_{0}^{+},r_{0}^{-})$ in $H^2(\T_L)\times H^2(\T_L)$, find two control $(v_{r^{+}},v_{r^{-}})$ in $L^2(H^1)\times L^2(H^1)$ such that the solution $(r^{+},r^{-})$ of} 
\begin{equation}
\label{NSK quantique Slcc**}
\left \{
\begin{array}{lcr}
     \partial_t r^{+} - \overline{\zeta}_{+}\Laplace r=f_{r^{+}}+\overline{\alpha}_{1}r^{+}+\overline{\alpha}_{3}r^{-}+\chi_0 v_{r^{+}}\ \ \ \textit{in}\ (0,T)\times\T_{L}, \\ 
      \partial_t r^{-}-\overline{\zeta}_{-}\Laplace r^{-} =f_{r^{-}}+\overline{\alpha}_{2}r^{+}+\overline{\alpha}_{4}r^{-}+\chi_0 v_{r^{-}}\ \textit{in}\ (0,T)\times\T_{L},
      \end{array}
\right.
\end{equation}
\textit{satisfies} 
\begin{equation}
\label{NSK quantique Slcc** initial et final}
(r^{+},r^{-})_{|_{t=0}}=(r^{+}_{0},r^{-}_{0})\ \ \ \textit{and}\ \ \ (r^{+},r^{-})_{|_{t=T}}=(0,0)\ \ \textit{in}\ \ \T_L,
\end{equation}
\textit{and belongs to} $L^2(H^3)\times L^2(H^3)$. 
We treat this problem in Section 4. The proof is based on the estimates of Theorem \ref{theorem 3.3 EEG} and Lemma \ref{proposition 3.4 EGG}.

\subsection{Non-diagonalizable case}

In this subsection we assume that \eqref{La fameuse matrice} is non-diagonalizable.\\ 
In this case we set 
$$
\zeta=\frac{2\mu_{\star}+\nu_{\star}}{2}>0,
$$ 
and, to make $^t\! A$ triangular, we consider  the $2\times2$ invertible matrix $R$ given by 
\begin{equation}
\label{Def-R}
R:=
\begin{pmatrix}
1 & 0 \smallskip\\
\displaystyle \frac{1}{\zeta} & 1 
\end{pmatrix}.
\end{equation}
Through explicit computations, we then check that  $(\sigma, q)$ solves System \eqref{NSK quantique Sloxc*} if and only if the new unknowns 
$$
\begin{pmatrix}
y^{+} \\
y^{-} 
\end{pmatrix}:=R
\begin{pmatrix}
\sigma \\
q 
\end{pmatrix}
$$
satisfies
\begin{equation}
\label{NSK quantique Slcc* Jordan}
\left \{
\begin{array}{lcr}
     -\partial_t y^{+} - \zeta\Laplace y^{+}=g_{y^{+}}+\beta_1 y^{+}+\beta_2 y^{-}-\kappa_{\star}\Laplace y^{-}\  \ \text{in}\ (0,T)\times\T_{L}, \\ 
      -\partial_t y^{-}-\zeta\Laplace y^{-} =g_{y^{-}}+\beta_3 y^{+}+\beta_4 y^{-}\ \ \qquad\ \ \, \ \ \ \ \ \text{in}\ (0,T)\times\T_{L},
\end{array}
\right.
\end{equation}
with 
$$
	\beta_1:=-\frac{p_{\star}}{\zeta},
	\quad
	 \beta_2:=p_{\star}, 
	 \quad
		\beta_3:=-\frac{p_{\star}}{\zeta}, 
	\quad 
	\beta_4:=\frac{p_{\star}}{\zeta}.
$$
and 
$$
\begin{pmatrix}
g_{y^{+}} \\
g_{y^{-}} 
\end{pmatrix}:=R
\begin{pmatrix}
g_{\sigma} \\
g_q 
\end{pmatrix}.
$$
Since System \eqref{NSK quantique Slcc* Jordan} is linear, its observability is equivalent to the controllability statement for this adjoint system written in the dual variables $(r^{+},r^{-})$, where the adjoint is taken with respect to the variables $(y^{+},y^{-})$. And again, to get an observability result in Sobolev spaces of weak regularity on $(y^+,y^-)$, we will consider the controllability of the adjoint in Sobolev spaces of high regularity as follows.
\\
\textit{Given $(r^{+}_{0},r^{-}_{0})$ in $H^2(\T_L)\times H^2(\T_L)$, find two control $(v_{r^{+}},v_{r^{-}})$ in $L^2(H^1)\times L^2(H^1)$ such that the solution $(r^{+},r^{-})$ of the}
\begin{equation}
\label{NSK quantique Slcc** Jordan}
\left \{
\begin{array}{lcr}
     \partial_t r^{+} - \zeta\Laplace r^{+}=f_{r^{+}}+\beta_1r^{+}+\beta_3r^{-}+\chi_0 v_{r^{+}}\ \ \ \ \ \ \ \ \ \ \qquad\textit{in}\ (0,T)\times\T_{L}, \\ 
     \partial_t r^{-}-\zeta\Laplace r^{-} =f_{r^{-}}+\beta_2r^{+}+\beta_4 r^{-}-\kappa_{\star}\Laplace r^{+}+\chi_0 v_{r^{-}}\ \, \ \textit{in}\ (0,T)\times\T_{L},
      
\end{array}
\right.
\end{equation}
\textit{satisfies} 
$$
(r^{+},r^{-})_{|_{t=0}}=(r^{+}_{0},r^{-}_{0})\ \ \ \textit{and}\ \ \ (r^{+},r^{-})_{|_{t=T}}=(0,0) \ \ \textit{in}\ \ \T_L,
$$
\textit{and belongs to} $L^2(H^3)\times L^2(H^3)$.

We treat this problem in Section 5. The proof is based on Theorem \ref{theorem 3.3 M} and Lemma \ref{proposition 3.4 EGG version Jordan}.
%
%
\section{Controllability of (\ref{NSK quantique Slcc**}): The diagonalizable case.}\label{Sec-Diag}
This section is devoted to the controllability of \eqref{NSK quantique Slcc**}. We recall that this system corresponds to the case in which \eqref{La fameuse matrice} is diagonalizable.  We aim to establish the following theorem.
 \begin{lem}
 \label{control Slcc**}
Let $T>0$. Let $(r^{+}_{0},r^{-}_{0})\in H^2(\T_L)\times H^2(\T_L)$. There exist a positive constant $C$ and a real number $s_0\geq 1$ such that for all $s\geq s_0$, for all $f_{r^{+}} $ and $f_{r^{-}}$ in $L^2(L^2)$ such that 
\begin{equation}
\label{3.1 EGG}
\|\theta^{-\frac 32}(f_{r^{+}},f_{r^{-}})e^{s\vphi}\|_{L^2(L^2)}<+\infty
\end{equation}
and
\begin{equation}
\label{3.3 EGG}
  (f_{r^{+}},f_{r^{-}})e^{s\Phi}\in L^2(0,T;H^1(\T_L)),  
\end{equation}
there exists a controlled trajectory $(r^{+},r^{-})$ solving \eqref{NSK quantique Slcc**} and satisfying the following estimate
\begin{align}
    \|(r^{+},r^{-}) e^{3s\Phi/4}&\|_{L^2(H^3)}+\|\chi_0(v_{r^{+}},v_{r^{-}})e^{3s\Phi/4}\|_{L^2(H^1)}\nonumber\\ 
    & \leq C \left(\|(f_{r^{+}},f_{r^{-}})e^{s\Phi}\|_{L^2(H^1)}+\|(r^{+}_{0},r^{-}_{0})e^{s\Phi(0)}\|_{H^2},\label{3.4 EGG}\right).
\end{align}
\end{lem}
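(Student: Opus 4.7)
The plan is to exploit the structure of \eqref{NSK quantique Slcc**}: it is a pair of scalar complex-coefficient heat equations with principal parts $\partial_t - \overline{\zeta}_\pm\Laplace$ (with $\Re(\overline{\zeta}_\pm)>0$ by \textbf{(H1)}) coupled only through zero-order terms. Each scalar equation is therefore amenable to Theorem \ref{theorem 3.3 EEG} and Lemma \ref{proposition 3.4 EGG}, so it remains to handle the coupling by a perturbative argument.

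I would set up a linear fixed-point argument in the weighted space
\[
X := \bigl\{\hat{r}:\ \|\hat{r}\,e^{s\vphi}\|_{L^2(L^2)}<+\infty\bigr\}.
\]
For $(\hat{r}^{+},\hat{r}^{-})\in X\times X$, I would define $(r^{\pm},v_{r^{\pm}})$ as the controlled pair produced by Theorem \ref{theorem 3.3 EEG} applied to
\[
\partial_t r^{\pm}-\overline{\zeta}_{\pm}\Laplace r^{\pm} = \widetilde{f}_{r^{\pm}}+\chi_0 v_{r^{\pm}},\qquad r^{\pm}_{|t=0}=r^{\pm}_0,\qquad r^{\pm}_{|t=T}=0,
\]
with effective sources absorbing the couplings, e.g.\ $\widetilde{f}_{r^{+}}:=f_{r^{+}}+\overline{\alpha}_1\hat{r}^{+}+\overline{\alpha}_3\hat{r}^{-}$. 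Using $\theta^{-3/2}\leq 1$ from \eqref{minoration de theta}, estimate \eqref{3.10 EGG} yields
\[
s^{3/2}\|(r^{+},r^{-})e^{s\vphi}\|_{L^2(L^2)}\leq C\Bigl(\|\theta^{-3/2}(f_{r^{+}},f_{r^{-}})e^{s\vphi}\|_{L^2(L^2)} + \|(\hat{r}^{+},\hat{r}^{-})e^{s\vphi}\|_{L^2(L^2)}+s^{1/2}\|(r^{+}_0,r^{-}_0)e^{s\vphi(0)}\|_{L^2}\Bigr),
\]
with $C$ independent of $s$. Taking $s_0$ large enough that $C/s_0^{3/2}<1$, the linear map $(\hat{r}^{+},\hat{r}^{-})\mapsto(r^{+},r^{-})$ is a strict contraction on $X\times X$; the Banach fixed-point theorem then provides a solution $(r^{+},r^{-})$ of \eqref{NSK quantique Slcc**}--\eqref{NSK quantique Slcc** initial et final} with controls depending linearly on $(r^{\pm}_0,f_{r^{\pm}})$. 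Moreover, the gradient bound in \eqref{3.10 EGG} together with $e^{3s\Phi/4}\leq e^{s\vphi}$ from \eqref{2.41 EGG} gives $\nabla r^{\pm}e^{3s\Phi/4}\in L^2(L^2)$.

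To upgrade to the norms required in \eqref{3.4 EGG}, I would bootstrap via Lemma \ref{proposition 3.4 EGG}. Item 1 of that lemma immediately supplies $\|\chi_0(v_{r^{+}},v_{r^{-}})e^{3s\Phi/4}\|_{L^2(H^2)}$, which is stronger than the $L^2(H^1)$ bound sought in \eqref{3.4 EGG}. For item 2 applied to each equation at the fixed point, one needs $\widetilde{f}_{r^{\pm}}e^{3s\Phi/4}\in L^2(H^1)$: the contribution of $f_{r^{\pm}}$ is granted by \eqref{3.3 EGG}, and the zero-order coupling contributions only require $r^{\pm}e^{3s\Phi/4}\in L^2(H^1)$, which is already known from the previous step. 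Item 2 of Lemma \ref{proposition 3.4 EGG} then delivers the $L^2(H^3)$ estimate on $r^{\pm}e^{3s\Phi/4}$, closing \eqref{3.4 EGG}.

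The main obstacle is precisely the tension between two a priori incompatible requirements: the contraction argument provides smallness only through the Carleman gain $s^{-3/2}$ in the weak $L^2(L^2)$ norm, whereas the target estimate \eqref{3.4 EGG} lives in the stronger $L^2(H^3)$ space where no such smallness is available. The two-step strategy above resolves this by first closing the fixed point at the weak regularity level and only afterwards invoking the parabolic regularity output of Lemma \ref{proposition 3.4 EGG}. Careful bookkeeping of the weights $e^{s\vphi}$, $e^{3s\Phi/4}$, and $e^{s\Phi}$ via the ordering $3\Phi/4\leq\vphi\leq\Phi$ from \eqref{2.41 EGG} is essential throughout to pass from one estimate to the other.
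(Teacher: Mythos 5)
Your proposal follows essentially the same route as the paper: a Banach fixed-point argument in the weighted $L^2$ space $\{\hat r : \hat r e^{s\varphi}\in L^2(L^2)\}$ with the coupling absorbed into the source, contraction obtained from the $s^{-3/2}$ gain in Theorem \ref{theorem 3.3 EEG}, followed by a regularity bootstrap via Lemma \ref{proposition 3.4 EGG}. The one place where your wording is slightly loose is the gradient step: the bound in \eqref{3.10 EGG} is on $\theta^{-1}\nabla r\, e^{s\varphi}$, so $e^{3s\Phi/4}\leq e^{s\varphi}$ alone does not give $\nabla r\, e^{3s\Phi/4}\in L^2$; one also needs that $\theta\, e^{3s\Phi/4}\lesssim e^{s\varphi}$, which follows from $\varphi-3\Phi/4\gtrsim\theta$ (a consequence of \eqref{2.32 EGG} and \eqref{2.41 EGG}) and $e^{sc\theta}\gtrsim\theta$, but this is the same elision the paper makes and does not affect the argument.
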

\begin{proof}
We will prove the controllability of \eqref{NSK quantique Slcc**} by a fixed-point argument based on the control results obtained in Theorem \ref{theorem 3.3 EEG} and Lemma \ref{proposition 3.4 EGG}. The proof follows the strategy of \cite{LocalexactcontrollabilityforthetwoandthreedimensionalcompressibleNavierStokesequations}.\\
\textit{Existence of the solution to the control problem.} We construct the controlled trajectory using a Banach fixed-point argument. We introduce the linear space
$$
\mathcal{C}_s:=\enstq{r\in L^2(0,T; L^2(\T_L))}{re^{s\vphi}\in L^2(0,T;L^2(\T_L)) }.
$$
For $\Tilde{r}^{+}$ and $\Tilde{r}^{-}$ in $\mathcal{C}_s$, we introduce 
\begin{equation*}
    \left \{
\begin{array}{lcr}
     \Tilde{f}_{r^{+}}:=\Tilde{f}_{r^{+}}(\Tilde{r}^{+},\Tilde{r}^{-})=f_{r^{+}}+\overline{\alpha}_1\Tilde{r}^{+}+\overline{\alpha}_3\Tilde{r}^{-},\\
     
    \Tilde{f}_{r^{-}}:=\Tilde{f}_{r^{-}}(\Tilde{r}^{+},\Tilde{r}^{-})=f_{r^{-}}+\overline{\alpha}_2\Tilde{r}^{+}+\overline{\alpha}_4\Tilde{r}^{-}.
    
\end{array}
\right.
\end{equation*}
As $f_{r^{+}}$ and $f_{r^{-}}$ satisfy \eqref{3.1 EGG}, for every $(\Tilde{r}^{+},\Tilde{r}^{-})$ in $\mathcal{C}_s\times\mathcal{C}_s$, $\Tilde{f}_{r^{+}}$ and $\Tilde{f}_{r^{-}}$ satisfy Assumption \eqref{3.9 EGG} of Theorem \ref{theorem 3.3 EEG}. In fact, Theorem \ref{theorem 3.3 EEG} provides two linear maps $(r^{+}_{0},\Tilde{f}_{r^{+}})\mapsto (r^{+},v_{r^{+}})$ and $(r^{-}_{0},\Tilde{f}_{r^{-}})\mapsto (r^{-},v_{r^{-}})$. Therefore, one can define a map $\Lambda_s$ on $\mathcal{C}_s\times\mathcal{C}_s$ which to a data $(\Tilde{r}^{+},\Tilde{r}^{-})$ in $\mathcal{C}_s\times\mathcal{C}_s$ associates $(r^{+},r^{-})$ where $r^{+}$ and $r^{-}$ are respectively the solutions of the following controlled systems
\begin{equation*}
    \left \{
    \begin{array}{lcr}
    \partial_t r^{+}-\overline\zeta_{+}\Laplace r^{+}=\Tilde{f}_{r^{+}}+v_{r^{+}}\chi_0\ \text{in}\ (0,T)\times\T_L,\\
    r^{+}_{|_{t=0}}=r^{+}_{0},\ \ r^{+}_{|_{t=T}}=0\ \ \ \ \ \ \, \qquad \text{in}\ \T_L
    \end{array}
    \right.
\end{equation*}
 and
\begin{equation*}
    \left \{
    \begin{array}{lcr}
    \partial_t r^{-}-\overline\zeta_{-}\Laplace r^{-}=\Tilde{f}_{r^{-}}+v_{r^{-}}\chi_0\ \text{in} \ (0,T)\times\T_L,\\
    r^{-}_{|_{t=0}}=r^{-}_{0}\ \ r^{-}_{|_{t=T}}=0,\ \ \ \ \ \ \ \  \ \ \ \,\  \text{in}\ \T_L,
    \end{array}
    \right.
\end{equation*}
given by \Cref{theorem 3.3 EEG}. As in \cite{Localcontrollabilitytotrajectoriesfornon-homogeneousincompressibleNavier-Stokesequations}, using  estimates of Theorem \ref{theorem 3.3 EEG}, we show that for any $(r_{+}^{1},r_{-}^{1})$ and $(r_{+}^{2},r_{-}^{2})$ in $\mathcal{C}_s\times\mathcal{C}_s$, we have
\begin{equation}
    \label{I 03/10/2023}
    \|(\Lambda_{s}(\Tilde{r}_{+}^{1},\Tilde{r}_{-}^{1})-\Lambda_{s}(\Tilde{r}_{+}^{2},\Tilde{r}_{-}^{2}))e^{s\vphi}\|_{L^2(L^2)}\leq C_1s^{-\frac 32} \|((\Tilde{r}_{+}^{1},\Tilde{r}_{-}^{1})-(\Tilde{r}_{+}^{2},\Tilde{r}_{-}^{2}))e^{s\vphi}\|_{L^2(L^2)},
\end{equation}
for large enough $s\geq 1$, where $C_1$ is a constant which does not depend on $s$. By equipping the space $\mathcal{C}_s\times\mathcal{C}_s$ with the following norm given for any $(r^+,r^-)$ in $\mathcal{C}_s\times\mathcal{C}_s$ by 
$$
\|(r^+,r^-)\|_{\mathcal{C}_s\times\mathcal{C}_s}:=\|(r^+,r^-)e^{s\vphi}\|_{L^2(L^2)},
$$
then $\mathcal{C}_s\times\mathcal{C}_s$ is a Banach space. Then, for $s\geq 1$ large enough, it follows from \eqref{I 03/10/2023} that the map $\Lambda_s$ is a strict contraction from $\mathcal{C}_s\times\mathcal{C}_s$ into itself. Applying Banach's fixed-point theorem, we deduce that $\Lambda_s$ has a unique fixed-point $(r^{+},r^{-})$ in $\mathcal{C}_s\times\mathcal{C}_s$. By construction, this fixed-point $(r^{+},r^{-})$ solves the controllability problem \eqref{NSK quantique Slcc**}-\eqref{NSK quantique Slcc** initial et final}. Furthermore, applying Theorem \ref{theorem 3.3 EEG} to $r^{+}$ and $r^{-}$, it follows that
\begin{align*}
s^{\frac{3}{2}}\|(r^{+},r^{-})&e^{s\vphi}\|_{L^2(L^2)}+s^{\frac 12}  \|\theta^{-1}\nabla(r^{+},r^{-})e^{s\vphi}\|_{L^2(L^2)} +\|\theta^{-\frac 32}\chi_0(v_{r^{+}},v_{r^{-}})e^{s\vphi}\|_{L^2(L^2)}\\
& \leq C_2\left( \|\theta^{-\frac 32}(f_{r^{+}},f_{r^{-}})e^{s\vphi}\|_{L^2(L^2)}+\|\theta^{-\frac 32}(r^{+},r^{-})e^{s\vphi}\|_{L^2(L^2)}+s^{\frac{1}{2}}\|(r^{+}_{0},r^{-}_{0})e^{s\vphi(0)}\|_{L^2(L^2)}\right),
\end{align*}
where $C_2$ is a positive constant, which does not depend on $s$. Since $\theta^{-\frac 32}\leq 1$, by taking $s\geq 1$ large enough to absorb the second term of the right-hand side of the above estimate, we finally obtain \begin{align}
    s^{\frac 32}\|(r^{+},r^{-}) e^{s\vphi}\|_{L^2(L^2)}+s^{\frac 12}  \|&\theta^{-1}\nabla(r^{+},r^{-})e^{s\vphi}\|_{L^2(L^2)} + \|\theta^{-\frac 32}\chi_0 (v_{r^{+}}, v_{r^{-}})  e^{s\vphi}\|_{L^2(L^2)}\nonumber \nonumber\\
     & \leq C_2\left(\|\theta^{-\frac 32}(f_{r^{+}},f_{r^{-}})e^{s\vphi}\|_{L^2(L^2)} +s^{\frac 12}\|(r^{+}_{0},r^{-}_{0})e^{s\vphi(0)}\|_{L^2}\right). \label{3.2 EGG}
\end{align}
\textit{Regularity estimates}.
 Since \eqref{minoration de theta} and $\vphi\leq\Phi$, applying Lemma \ref{proposition 3.4 EGG} to $f_{r^{+}}(r^{+},r^{-})$ and $f_{r^{-}}(r^{+},r^{-})$ such that $f_{r^{+}}(r^{+},r^{-})e^{3s\Phi/4}, f_{r^{-}}(r^{+},r^{-})e^{3s\Phi/4}\in L^2(H^1)$ and $(r^{+}_{0},r^{-}_{0})\in H^2\times H^2$, we deduce from Lemma \ref{proposition 3.4 EGG}, Item $1$ and $2$, that  $(r^+,r^-)e^{3s\Phi/4}$ belongs to $L^2(H^3)\times L^2(H^3)$. Each application of Lemma \ref{proposition 3.4 EGG} comes with estimates which, together, yield Estimate \eqref{3.4 EGG}.
\end{proof}

\section{Controllability of (\ref{NSK quantique Slcc** Jordan}): The non-diagonalizable case}\label{Sec-NonDiag}
In this section we are interested in the controllability of System \eqref{NSK quantique Slcc** Jordan}.
\begin{lem}
\label{Control Slcc** Jordan}
Let $T>0$. Let $(r^{+}_{0},r^{-}_{0})\in H^2(\T_L)\times H^2(\T_L)$. There exist a positive constant $C$ and a real number $s_0\geq 1$ such that for all $s\geq s_0$, for all $f_{r^{+}}$ and $f_{r^{-}}$ in $L^2(L^2)$ such that 
$$
\|(\theta^{-\frac{3}{2}}f_{r^{+}},\theta^{-1}f_{r^{-}})e^{s\vphi}\|_{L^2(L^2)}<+\infty
$$
and
$$
(f_{r^{+}},f_{r^{-}})e^{s\Phi}\in L^2(0,T;H^1(\T_L)),
$$
there exists a controlled trajectory $(r^{+},r^{-})$ solving \eqref{NSK quantique Slcc** Jordan} 
and satisfying the following estimate
\begin{align}
    \|(r^{+},r^{-}) e^{3s\Phi/4}&\|_{L^2(H^3)}+\|\chi_0(v_{r^{+}},v_{r^{-}})e^{3s\Phi/4}\|_{L^2(H^2)}\nonumber\\ 
    & \leq C\left(\|(f_{r^{+}},f_{r^{-}})e^{s\Phi}\|_{L^2(H^1)}+\|(r^{+}_{0},r^{-}_{0})e^{s\Phi(0)}\|_{H^2}\right).\label{3.4 EGG Jordan}
\end{align}
\end{lem}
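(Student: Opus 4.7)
The plan is to mimic the Banach fixed-point strategy used in the proof of Lemma~\ref{control Slcc**} (the diagonalizable case), replacing Theorem~\ref{theorem 3.3 EEG} and Lemma~\ref{proposition 3.4 EGG} by the shifted-weight versions Theorem~\ref{theorem 3.3 M} and Lemma~\ref{proposition 3.4 EGG version Jordan}, so as to accommodate the second-order coupling term $-\kappa_{\star}\Delta r^{+}$ appearing in the source of the equation for $r^{-}$.

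More precisely, I would first introduce a Banach space $\mathcal{C}^{J}_{s}$ of pairs $(\tilde r^{+},\tilde r^{-})$, equipped with a weighted $L^{2}(L^{2})$-norm designed so that the hypotheses of Theorem~\ref{theorem 3.3 M} and Lemma~\ref{proposition 3.4 EGG version Jordan} are satisfied when $(\tilde r^{+},\tilde r^{-})$ plays the role of a source. Reflecting the asymmetry of the hypothesis of the lemma ($\theta^{-3/2}$ for $f_{r^{+}}$, $\theta^{-1}$ for $f_{r^{-}}$), the norm of $\mathcal{C}^{J}_{s}$ must in particular include $\|\theta^{-1}\Delta \tilde r^{+}e^{s\varphi}\|_{L^{2}(L^{2})}$, so that the $\theta^{-1}$-weighted hypothesis of Theorem~\ref{theorem 3.3 M} applied to the $r^{-}$ equation is met. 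I would then define
$$
\Lambda^{J}_{s}(\tilde r^{+},\tilde r^{-})=(r^{+},r^{-}),
$$
where $r^{+}$ is the controlled trajectory produced by Theorem~\ref{theorem 3.3 EEG} applied to
$$
\partial_{t}r^{+}-\zeta\Delta r^{+}=f_{r^{+}}+\beta_{1}\tilde r^{+}+\beta_{3}\tilde r^{-}+\chi_{0}v_{r^{+}},
$$
(and upgraded to higher regularity via Lemma~\ref{proposition 3.4 EGG version Jordan}), while $r^{-}$ is produced by Theorem~\ref{theorem 3.3 M} applied to
$$
\partial_{t}r^{-}-\zeta\Delta r^{-}=f_{r^{-}}+\beta_{2}\tilde r^{+}+\beta_{4}\tilde r^{-}-\kappa_{\star}\Delta \tilde r^{+}+\chi_{0}v_{r^{-}}.
$$
The hypothesis $(f_{r^{+}},f_{r^{-}})e^{s\Phi}\in L^{2}(H^{1})$ ensures all weighted integrability conditions on the base forcings. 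Using the $s^{3/2}$ factor on the LHS of both Carleman-type estimates, together with the additional $s^{-1/2}$ smallness in front of $\|\theta^{-3/2}\nabla^{2}r\,e^{s\varphi}\|_{L^{2}(L^{2})}$ in Theorem~\ref{theorem 3.3 M}, I would show that $\Lambda^{J}_{s}$ is a strict contraction on $\mathcal{C}^{J}_{s}$ for $s$ sufficiently large, and Banach's fixed-point theorem would then yield a controlled trajectory $(r^{+},r^{-})$ satisfying \eqref{NSK quantique Slcc** Jordan}. The $L^{2}(H^{3})$ regularity of the trajectory and the $L^{2}(H^{2})$ regularity of the control, together with the estimate~\eqref{3.4 EGG Jordan}, would then follow by combining the contraction bound with Lemma~\ref{proposition 3.4 EGG version Jordan}, Items~1 and~2.

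The main obstacle is the reconciliation between the hypothesis of Theorem~\ref{theorem 3.3 M} when applied to the $r^{-}$ equation, which requires the bound $\|\theta^{-1}\Delta r^{+}e^{s\varphi}\|_{L^{2}(L^{2})}<\infty$ on the coupling term, and the natural output of Theorem~\ref{theorem 3.3 M} when applied to the $r^{+}$ equation, which only provides the weaker-in-weight bound $\|\theta^{-3/2}\Delta r^{+}e^{s\varphi}\|_{L^{2}(L^{2})}<\infty$ (with a beneficial $s^{-1/2}$ prefactor). Bridging this $\theta^{1/2}$ weight gap, through the careful design of the fixed-point space $\mathcal{C}^{J}_{s}$ combined with the exploitation of the $s^{-1/2}$ smallness of the second-derivative estimate in Theorem~\ref{theorem 3.3 M}, is precisely the \emph{slight shift of a power of $\theta$} mentioned in the introduction, and constitutes the key technical difficulty of the non-diagonalizable case.
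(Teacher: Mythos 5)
Your overall strategy — Banach fixed point on a weighted space, alternating between the unshifted and shifted Carleman-based controllability results, so as to capture the second-order coupling $-\kappa_\star\Delta r^+$ — is the right one. However, your assignment of the two theorems to the two equations is backwards, and this is precisely what produces the ``$\theta^{1/2}$ weight gap'' that you flag at the end and cannot close. You propose to use Theorem \ref{theorem 3.3 EEG} for $r^{+}$ and Theorem \ref{theorem 3.3 M} for $r^{-}$, and to build a fixed-point space carrying the norm $\|\theta^{-1}\Delta\tilde r^{+}e^{s\varphi}\|_{L^2(L^2)}$ (which Theorem \ref{theorem 3.3 M} applied to the $r^{-}$ equation indeed requires). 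But neither theorem, applied to the $r^{+}$ equation, can ever output a bound on $\|\theta^{-1}\Delta r^{+}e^{s\varphi}\|_{L^2(L^2)}$: Theorem \ref{theorem 3.3 EEG} gives no second-derivative control at all, and Theorem \ref{theorem 3.3 M} only gives $s^{-1/2}\|\theta^{-3/2}\nabla^2 r^{+}e^{s\varphi}\|_{L^2(L^2)}$, and since $\theta\geq 1$ one has $\theta^{-3/2}\leq\theta^{-1}$, so finiteness of the $\theta^{-3/2}$-weighted norm is \emph{weaker} than, and does not imply, finiteness of the $\theta^{-1}$-weighted norm. No clever choice of $\mathcal{C}^J_s$ or exploitation of the $s^{-1/2}$ prefactor bridges this: the mismatch is an obstruction, not a difficulty.

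The paper's resolution is the opposite pairing, which makes the weights line up with no gap at all. One applies Theorem \ref{theorem 3.3 M} to the $r^{+}$ equation (whose source $f_{r^{+}}+\beta_1\tilde r^{+}+\beta_3\tilde r^{-}$ carries no derivatives, so only the base $\theta^{-1}$-weighted integrability is needed, and this follows from $(f_{r^{+}},f_{r^{-}})e^{s\Phi}\in L^2(H^1)$). This outputs control on $s^{-1/2}\|\theta^{-3/2}\Delta r^{+}e^{s\varphi}\|_{L^2(L^2)}$ — and $\theta^{-3/2}$ is exactly the weight demanded by the hypothesis \eqref{3.9 EGG} of Theorem \ref{theorem 3.3 EEG}, which one then applies to the $r^{-}$ equation, whose source contains $-\kappa_\star\Delta\tilde r^{+}$. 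The fixed-point space reflects this asymmetry: $\mathcal{C}^{+}_{s}$ for $\tilde r^{+}$ carries the norm $s^{5/2}\|\theta^{1/2}re^{s\varphi}\|_{L^2(L^2)}+s^{1/2}\|\theta^{-3/2}\Delta re^{s\varphi}\|_{L^2(L^2)}$, while $\mathcal{C}^{-}_{s}$ for $\tilde r^{-}$ carries only $s^{3/2}\|re^{s\varphi}\|_{L^2(L^2)}$, and the contraction factor $s^{-1/2}$ comes out of the $s$-power bookkeeping between the two estimates. (The prose at the very start of the paper's proof names the pairing the way you do, but the displayed estimates \eqref{estime theta decale avec laplace}, \eqref{estime a l ancienne} and the definition of $\mathcal{C}^{\pm}_{s}$ make clear that the roles are as described here; the prose sentence is a slip.) Finally, the ``slight shift of a power of $\theta$'' alluded to in the strategy section refers to the very introduction of the shifted-weight Theorem \ref{theorem 3.3 M} — whose $\theta^{-1}$ weight is shifted by $\theta^{1/2}$ relative to the $\theta^{-3/2}$ of Theorem \ref{theorem 3.3 EEG} — not to an additional bridging step inside the fixed-point argument.
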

\begin{proof}
In this proof the constant implied by the symbol $\lesssim$ is independent from the parameter $s$. Let us introduce the following two functional spaces
$$
\mathcal{C}^{-}_{s}:=\enstq{r\in L^2(0,T;L^2(\T_L))}{re^{s\vphi}\in L^2(0,T;L^2(\T_L))}
$$
and
$$
\mathcal{C}_{s}^{+}:=\enstq{r\in L^2(0,T;H^2(\T_L))}{\theta^{\frac 12}re^{s\vphi},\theta^{-\frac 32}\Laplace re^{s\vphi}\in L^2(0,T;L^2(\T_L))}
$$
which we equip respectively with the norms
$$
\|r\|_{\mathcal{C}^{-}_{s}}:=s^{\frac 32}\|re^{s\vphi}\|_{L^2(L^2)}
$$
and
$$
\|r\|_{\mathcal{C}_{s}^{+}}:=s^{\frac{5}{2}}\|\theta^{\frac 12} re^{s\vphi}\|_{L^2(L^2)}+s^{                                                        \frac 12}\|\theta^{-\frac 32}\Laplace re^{s\vphi}\|_{L^2(L^2)}.
$$
Endowed with these norms, $\mathcal{C}_{s}^{+}$ and $\mathcal{C}_{s}^{-}$ are Hilbert (hence Banach) spaces.
For $\Tilde{r}^{+}$ in $\mathcal{C}_{s}^{+}$ and $\Tilde{r}^{-}$ in $\mathcal{C}_{s}^{-}$, we introduce 
\begin{equation*}
    \left \{
\begin{array}{lcr}
     \Tilde{f}_{r^{+}}:=\Tilde{f}_{r^{+}}(\Tilde{r}^{+},\Tilde{r}^{-})=f_{r^{+}}+\beta_1\Tilde{r}^{+}+\beta_3\Tilde{r}^{-},\\
     
    \Tilde{f}_{r^{-}}:=\Tilde{f}_{r^{-}}(\Tilde{r}^{-},\Tilde{r}^{-})=f_{r^{-}}+\beta_2\Tilde{r}^{+}+\beta_4\Tilde{r}^{-}-\kappa_{\star}\Laplace \Tilde{r}^{+}.
    
\end{array}
\right.
\end{equation*}
Using Theorem \ref{theorem 3.3 M} for the equation on $r^{-}$ and Theorem \ref{theorem 3.3 EEG} for the equation on $r^{+}$, one can define a map $\Lambda_s$ on $\mathcal{C}_{s}^{+}\times\mathcal{C}_{s}^{-}$ which to a data $(\Tilde{r}^{+},\Tilde{r}^{-})$ in $\mathcal{C}_{s}^{+}\times\mathcal{C}_{s}^{-}$ associates $(r^{+},r^{-})$ where $r^{+}$ and $r^{-}$ are respectively solutions of the controlled problem
\begin{equation*}
    \left \{
\begin{array}{lcr}
     \partial_t r^{+}-\zeta\Laplace r^{+}=\Tilde{f}_{r^{+}}+v_{r^{+}}\chi_0\ \ \ \ \ \ \, \ \ \ \ \qquad \ \ \ \ \, \text{in}\ \ (0,T)\times\T_L,\\
     \partial_t r^{-}-\zeta\Laplace r^{-}=\Tilde{f}_{r^{-}}+v_{r^{-}}\chi_0\ \ \ \ \ \ \ \ \, \ \ \ \qquad \ \ \ \,\text{in}\ \ (0,T)\times\T_L,\\
     (r^{+},r^{-})_{|_{t=0}}=(r^{+}_{0},r^{-}_{0}),\ (r^{+},r^{-})_{|_{t=T}}=(0,0)\ \text{in}\ \ \T_L
    
\end{array}
\right.
\end{equation*}
given by Theorem \ref{theorem 3.3 M} and Theorem \ref{theorem 3.3 EEG}. Let $(\Tilde{r}^{+}_{1},\Tilde{r}^{-}_{1})$ and $(\Tilde{r}^{+}_{2},\Tilde{r}^{-}_{2})$ in $\mathcal{C}_{s}^{+}\times\mathcal{C}_{s}^{-}$. We set $(R^{+},R^{-}):=\Lambda_s(\Tilde{r}^{+}_{1},\Tilde{r}^{-}_{1})-\Lambda_s(\Tilde{r}^{+}_{2},\Tilde{r}^{-}_{2})$, $\Tilde{f}_{R^{+}}:=\Tilde{f}_{r^{+}}(\Tilde{r}^{+}_{1},\Tilde{r}^{-}_{1})-\Tilde{f}_{r^{+}}(\Tilde{r}^{+}_{2},\Tilde{r}^{-}_{2})$ and $\Tilde{f}_{R^{-}}:=\Tilde{f}_{r^{-}}(\Tilde{r}^{+}_{1},\Tilde{r}^{-}_{1})-\Tilde{f}_{r^{-}}(\Tilde{r}^{+}_{2},\Tilde{r}^{-}_{2})$  so that $(R^{+},R^{-})$ is a solution of the following control problem
\begin{equation*}
    \left \{
\begin{array}{lcr}
     \partial_t R^{+}-\zeta\Laplace R^{+}=\Tilde{f}_{R^{+}}
     +v_{R^{+}}\chi_0\ \ \ \ \ \ \ \ \ \, \ \, \, \, \ \ \ \ \, \ \ \, \ \, \text{in}\ \ (0,T)\times\T_L,\\
     \partial_t R^{-}-\zeta\Laplace R^{-}=\Tilde{f}_{R^{-}}+v_{R^{-}}\chi_0\ \ \ \ \ \ \ \, \ \ \ \, \ \qquad \ \ \text{in}\ \ (0,T)\times\T_L,\\
     (R^{+},R^{-})_{|_{t=0}}=(0,0),\ (R^{+},R^{-})_{|_{t=T}}=(0,0)\ \ \ \, \text{in}\ \ \T_L.
    
\end{array}
\right.
\end{equation*}
From Theorem \ref{theorem 3.3 M} and Theorem \ref{theorem 3.3 EEG}, we deduce that
\begin{align}
s^{\frac 32}\|\theta^{\frac 12}R^{+}e^{s\vphi}\|_{L^2(L^2)}+&s^{-\frac 12}\|\theta^{-\frac 32}\Laplace R^{+}e^{s\vphi}\|_{L^2(L^2)}\lesssim \|\theta^{-1}\Tilde{R}^{+}e^{s\vphi}\|_{L^2(L^2)}+\|\theta^{-1}\Tilde{R}^{-}e^{s\vphi}\|_{L^2(L^2)}\label{estime theta decale avec laplace}
\end{align}
and 
\begin{align}
    s^{\frac 32}\|R^{-}e^{s\vphi}\|_{L^2(L^2)}
    &\lesssim\|\theta^{-\frac 32}\Tilde{R}^{-}e^{s\vphi}\|_{L^2(L^2)}+\|\theta^{-\frac 32}\Tilde{R}^{+}e^{s\vphi}\|_{L^2(L^2)}+\|\theta^{-\frac 32}\Laplace\Tilde{R}^{+} e^{s\vphi}\|_{L^2(L^2)}.\label{estime a l ancienne}   
\end{align}
Multiplying \eqref{estime theta decale avec laplace} by $s$ and combining the resulting estimate with \eqref{estime a l ancienne}, and then, using that $\theta\geq 
1$ and $s\geq 1$, we get
\begin{align}
   s^{\frac 52}\|\theta^{\frac 12}R^{+}&e^{s\vphi}\|_{L^2(L^2)}+s^{\frac 12}\|\theta^{-\frac 32}\Laplace R^{+}e^{s\vphi}\|_{L^2(L^2)}+s^{\frac{3}{2}}\|R^{-}e^{s\vphi}\|_{L^2(L^2)}\nonumber\\
   &\lesssim s\|\theta^{\frac 12}\Tilde{R}^{+}e^{s\vphi}\|_{L^2(L^2)}+s\|\Tilde{R}^{-}e^{s\vphi}\|_{L^2(L^2)}+\|\theta^{-\frac 32}\Laplace\Tilde{R}^{+}e^{s\vphi}\|_{L^2(L^2)}\nonumber\\
   &\lesssim s^{-\frac{3}{2}}\left(s^{\frac 52}\|\theta^{\frac 12}\Tilde{R}^{+}e^{s\vphi}\|_{L^2(L^2)}\right)+s^{-\frac 12}\left(s^{\frac 32}\|\Tilde{R}^{-}e^{s\vphi}\|_{L^2(L^2)}\right)+s^{-\frac 12}\left(s^{\frac 12}\|\theta^{-\frac 32}\Laplace\Tilde{R}^{+}e^{s\vphi}\|_{L^2(L^2)}\right)\nonumber\\
   &\lesssim s^{-\frac 12}\left(s^{\frac 52}\|\theta^{\frac 12}\Tilde{R}^{+}e^{s\vphi}\|_{L^2(L^2)}+s^{\frac 32}\|\Tilde{R}^{-}e^{s\vphi}\|_{L^2(L^2)}+s^{\frac 12}\|\theta^{-\frac 32}\Laplace\Tilde{R}^{+}e^{s\vphi}\|_{L^2(L^2)}\right).\label{Laplace estime}
\end{align}
Then, \eqref{Laplace estime} can be rewritten as follows
$$
\|\Lambda_s(\Tilde{r}^{+}_{1},\Tilde{r}^{-}_{1})-\Lambda_s(\Tilde{r}^{+}_{2},\Tilde{r}^{-}_{2})\|_{\mathcal{C}_{s}^{+}\times\mathcal{C}_{s}^{-}}\leq Cs^{-\frac 12}\|(\Tilde{r}^{+}_{1},\Tilde{r}^{-}_{1})-(\Tilde{r}^{+}_{2},\Tilde{r}^{-}_{2})\|_{\mathcal{C}^{+}_{s}\times\mathcal{C}^{-}_{s}}, 
$$
where $C$ is a positive constant that does not depend on $s\geq s_0$. From the Banach fixed-point theorem, we deduce that for $s$ large enough, $\Lambda_s$ admits a unique fixed-point in $\mathcal{C}_{s}^{+}\times\mathcal{C}_{s}^{-}$. Let $(r^{+},r^{-})\in \mathcal{C}_{s}^{+}\times\mathcal{C}_{s}^{-}$ be the fixed-point of $\Lambda_s$ and let $(v_{r^{+}},v_{r^{-}})$ be the associated control.

\noindent\textit{Regularity estimates.} Recall that $(f_{r^{+}},f_{r^{-}})e^{s\Phi}\in L^2(H^1)$. We have $r^{-}e^{3s\Phi/4}\in L^2(L^2)$ and $r^{+}e^{3s\Phi/4}\in L^2(H^1)$, hence $f_{r^{+}}(r^{+},r^{-})e^{3s\Phi/4}\in L^2(L^2)$. Lemma \ref{proposition 3.4 EGG} thus implies that $r^{+}e^{3s\Phi/4}\in L^2(H^2)$. It follows that $f_{r^{-}}(r^{+},r^{-})e^{3s\Phi/4}\in L^2(L^2)$. Then, from Lemma \ref{proposition 3.4 EGG version Jordan}, we get that $r^{-}e^{3s\Phi/4}\in L^2(H^2)$. Accordingly, $f_{r^{+}}(r^{+},r^{-})e^{3s\Phi/4}\in L^2(H^1)$ and using again Lemma \ref{proposition 3.4 EGG}, we get $r^{+}e^{3s\Phi/4}\in L^2(H^3)$. We thus deduce that $f_{r^{-}}e^{3s\Phi/4}\in L^2(H^1)$, and finally $r^{-} e^{3s\Phi/4}\in L^2(H^3)$ from Lemma \ref{proposition 3.4 EGG version Jordan}. Each of the application of Lemma 2.3 and Lemma 2.5 comes with estimates, which directly yield \eqref{3.4 EGG Jordan}.
\end{proof}

\section{Proof of Theorem \ref{theorem 4.1 EGG}}\label{Sec-Proof-Cont}


\begin{proof}[Proof of Theorem \ref{theorem 4.1 EGG}]
\textit{\underline{Step 1: Observability of \eqref{NSK quantique Sloxc*}.}} Let us consider $s_0\geq 1$ large enough so that Lemma \ref{control Slcc**} and \ref{Control Slcc** Jordan} hold. We will recover the observability of \eqref{NSK quantique Sloxc*} from the controllability of \eqref{NSK quantique Slcc**} in the diagonalizable case obtained in Lemma \ref{control Slcc**} and of \eqref{NSK quantique Slcc** Jordan} in the non-diagonalizable case obtained in Lemma \ref{Control Slcc** Jordan}.\\ 
Let us first focus on the diagonalizable case. Let $(y^+, y^-)$ be a solution of \eqref{NSK quantique Slcc*}. By definition of the dual norm, we have
\begin{align}
    \|&(y^{+},y^{-})  e^{-s_0\Phi}\|_{L^2(H^{-1})}  +\|(y^{+}(0),y^{-}(0))e^{-s_0\Phi(0)}\|_{H^{-2}}\nonumber\\
     &=\!\sup_{\substack{\|(f_{r^{+}},f_{r^{-}})e^{s_0\Phi}\|_{L^2(H^1)}\leq 1\\ \|(r^{+}_{0},r^{-}_{0})e^{s_0\Phi(0)}\|_{H^2}\leq 1}}\!\lbrace\Re(\langle (f_{r^{+}},f_{r^{-}}),(y^{+},y^{-}) \rangle_{L^2(H^1),L^2(H^{-1})})+\Re(\langle(r^{+}_0,r^{-}_{0}),(y^{+}(0),y^{-}(0))\rangle_{H^{2},H^{-2}})\rbrace \label{definition norm dual}.
\end{align}
Now, for $(r^{+}_{0},r^{-}_{0})\in H^{2}(\T_L)$ and $(f_{r^{+}},f_{r^{-}})\in L^2(H^1)$, such that $(f_{r^{+}},f_{r^{-}})e^{s_0\Phi}\in L^2(H^1)$, we can associate the controlled  trajectory $(r^+,r^-)$ of \eqref{NSK quantique Slcc**} with corresponding controls $(v^+,v^-)$ given by Lemma \ref{control Slcc**}, and we obtain (recall that $(y^+, y^-)$ be a solution of \eqref{NSK quantique Slcc*} with source term $(g^+, g^-)$)
\begin{align*}
    \Re(\langle (f_{r^{+}}&,f_{r^{-}}),(y^{+},y^{-}) \rangle_{L^2(H^1),L^2(H^{-1})})+\Re(\langle(r^{+}_{0},r^{-}_{0}),(y^{+}(0),y^{-}(0))\rangle_{H^{2},H^{-2}}) \\
    &=\Re(\langle (g_{y^{+}},g_{y^{-}}), (r^{+},r^{-}) \rangle_{L^2(H^{-3}),L^2(H^{3})})+\Re(\langle (y^{+},y^{-}),\chi_0(v_{r^{+}},v_{r^{-}})\rangle_{L^2(H^{-1}), L^2(H^{1})}).
\end{align*}
 Consequently, using \eqref{3.4 EGG} and \eqref{definition norm dual}, we get
\begin{align}
    \|(y^{+},y^{-})e^{-s_0\Phi}&\|_{L^2(H^{-1})}+\|(y^{+}(0),y^{-}(0)) e^{-s_0\Phi(0)}\|_{H^{-2}} \nonumber\\
    & \lesssim \|(g_{y^{+}},g_{y^{-}})e^{-\frac{3s_0\Phi}{4}}\|_{L^2(H^{-3})} +\|\chi_0(y^{+},y^{-})e^{-\frac{3s_0\Phi}{4}}\|_{L^2(H^{-1})}.\label{inegalite dualite NSK quantique Slcc*}
\end{align}
In order to obtain observability for \eqref{NSK quantique Sloxc*} from \eqref{inegalite dualite NSK quantique Slcc*}, we simply remind that solutions $(y^+,y^-)$ of \eqref{NSK quantique Slcc*} correspond to solutions $(\sigma, q)$ of \eqref{NSK quantique Sloxc*} through the transform
\begin{equation*}
\begin{pmatrix}
\sigma \\
q 
\end{pmatrix}
 : = 
Q^{-1}\begin{pmatrix}
y^{+} \\
y^{-} 
\end{pmatrix}
\ \ \ \text{and}\ \ \ 
\begin{pmatrix}
g_{\sigma} \\
\divergence (g_{z}) 
\end{pmatrix}
: = 
Q^{-1}\begin{pmatrix}
g_{y^{+}} \\
g_{y^{-}} 
\end{pmatrix},
\end{equation*}
where the matrix $Q$ is the one in \eqref{Def-Q}.
When $^t\! A$ is not diagonalizable, the same strategy applies line to line, based on the duality between the control result in Lemma \ref{Control Slcc** Jordan}  for the system \eqref{NSK quantique Slcc** Jordan} and the observability of \eqref{NSK quantique Slcc* Jordan}, and the correspondence between the solutions $(y^+,y^-)$ of system \eqref{NSK quantique Slcc* Jordan} and  the solutions $(\sigma, q)$ of system \eqref{NSK quantique Sloxc*} through the matrix $R$ in \eqref{Def-R} by 
\begin{equation*}
\begin{pmatrix}
\sigma \\
q 
\end{pmatrix}
:=
R^{-1}\begin{pmatrix}
y^{+} \\
y^{-} 
\end{pmatrix}
\ \ \ \text{and}\ \ \ 
\begin{pmatrix}
g_{\sigma} \\
\divergence (g_{z}) 
\end{pmatrix},
=
R^{-1}\begin{pmatrix}
g_{y^{+}} \\
g_{y^{-}} 
\end{pmatrix}.
\end{equation*}
In both cases, we have obtained that solutions  $(\sigma,q)$ of \eqref{NSK quantique Sloxc*} satisfies,
\begin{align}
    \|(\sigma,q)e^{-s_0\Phi}&\|_{L^2(H^{-1})\times L^2(H^{-1})}+\|(\sigma(0),q(0)) e^{-s_0\Phi(0)}\|_{H^{-2}\times H^{-2}} \nonumber\\
    & \lesssim \|(g_{\sigma},g_{z})e^{-\frac{3s_0\Phi}{4}}\|_{L^2(H^{-3})\times L^2(H^{-2})} +\|\chi_0(\sigma,q)e^{-\frac{3s_0\Phi}{4}}\|_{L^2(H^{-1})\times L^2(H^{-1})},\label{changement inconnu dans inegalite de dualite}
\end{align}
that is the observability estimate for \eqref{NSK quantique Sloxc*}.

\noindent \textit{\underline{Step 2: Observability of \eqref{NSK quantique Sl*}}.} Let us rewrite the equation on $z$ in \eqref{NSK quantique Sl*} as 
\begin{equation*}
     -\partial_t z-\mu_{\star}\Laplace z=g_z+\nabla \sigma+(\mu_{\star}+\nu_{\star})\nabla q\ \ \text{in}\ (0,T)\times\T_L.
\end{equation*}
To recover the estimate on $z$, we use the duality with the following controllability problem for the heat equation
\begin{equation}
    \label{equation de le chaleur dual de la seconde equation de NSK quantique Sl*}
    \left \{
\begin{array}{lcr}
     \partial_t y - \mu_{\star} \Laplace y=\Tilde{f}_y+v_{y}\chi_0\ \ \ \text{in}\ (0,T)\times\T_{L},\\
     y_{|_{t=0}}=y_0\ \ \text{and}\ \ y_{|_{t=T}}=0\ \ \text{in}\ \T_L.
\end{array}
\right.
\end{equation}
Replacing $s$ by $4s_0/3$ in Lemma \ref{proposition 3.4 EGG}, Items $1$ and $2$, we define a map $\Xi$ which to two functions $y_0$ in $H^1(\T_L)$ and $\Tilde{f}_y$ such that $\Tilde{f}_y e^{\frac{4s_0\Phi}{3}}$ belongs in $L^2 (L^2)$, associates the solution $(y,\chi_0 v_y)$ of \eqref{equation de le chaleur dual de la seconde equation de NSK quantique Sl*} satisfying 
$$
\|ye^{s_0\Phi}\|_{L^2(H^2)}+\|\chi_0 v_y e^{s_0\Phi}\|_{L^2(L^2)}\lesssim \|\Tilde{f}_y e^{\frac{4s_0\Phi}{3}}\|_{L^2(L^2)}+\|y_0 e^{\frac{4s_0\Phi(0)}{3}}\|_{H^1}.
$$
By duality and according to the above estimate, we get
\begin{align*}
    \|ze^{-\frac{4s_0\Phi}{3}}&\|_{L^2 (L^2)}+\|z(0)e^{-\frac{4s_0\Phi(0)}{3}}\|_{H^{-1}}\\ &=\sup_{\substack{\|\Tilde{f}_ye^{\frac{4s_0\Phi}{3}}\|_{L^2(L^2)}\leq 1 \\ \|y_0e^{\frac{4s_0\Phi(0)}{3}}\|_{H^1}\leq 1}}\!\lbrace \Re\langle (\Tilde{f}_y,y_0),(z,z(0)) \rangle_{L^2(L^2)\times H^2,L^2(L^2)\times H^{-1}}\rbrace\\
    &=\sup_{\substack{\|\Tilde{f}_ye^{\frac{4s_0\Phi}{3}}\|_{L^2(L^2)}\leq 1 \\ \|y_0e^{\frac{4s_0\Phi(0)}{3}}\|_{H^1}\leq 1}}\lbrace \Re\langle \Xi(\Tilde{f}_y,y_0),(g_z+\nabla\sigma+(\nu_{\star}+\mu_{\star})\nabla q,z) \rangle_{L^2(H^2)\times L^2(L^2),L^2(H^{-2})\times L^2(L^2)}\rbrace\\
    &\lesssim \|(g_z+\nabla\sigma+(\mu_{\star}+\nu_{\star})\nabla q)e^{-s_0\Phi}\|_{L^2(H^{-2})}+\|\chi_0 ze^{-s_0\Phi(0)}\|_{L^2(L^2)}\\
    &\lesssim \|g_ze^{-\frac{3s_0\Phi}{4}}\|_{L^2(H^{-2})}+\|(\sigma,q)e^{-s_0\Phi}\|_{L^2(H^{-1})}+\|\chi_0 ze^{-s_0\Phi(0)}\|_{L^2(L^2)}.
\end{align*}
Then, according to \eqref{changement inconnu dans inegalite de dualite} it follows that
\begin{align*}
    \|ze^{-\frac{4s_0\Phi}{3}}&\|_{L^2 (L^2)}+\|z(0)e^{-\frac{4s_0\Phi(0)}{3}}\|_{H^{-1}}\\
    &\lesssim \|(g_{\sigma},g_{z})e^{-\frac{3s_0\Phi}{4}}\|_{L^2(H^{-3})\times L^{2}(H^{-2})}+\|\chi_0(\sigma,q)e^{-\frac{3s_0\Phi}{4}}\|_{L^2(H^{-1)}\times L^2(H^{-1})}
    +\|\chi_0 z e^{-\frac{3s_0\Phi}{4}}\|_{L^2(L^2)}.
\end{align*}
As $\chi=1$ on $supp(\chi_0)$, we have $\chi_0\chi=\chi_0$ and $\chi_0\!\divergence(z)=\chi_0\!\divergence(\chi z)$. Therefore, using that the multiplication by $\chi_0$ maps $H^{-1}$ to itself, we get
\begin{align*}
\|\chi_0 q e^{-\frac{3s_0\Phi}{4}}\|_{L^2(H^{-1})}\lesssim\|\chi ze^{-\frac{3s_0\Phi}{4}}\|_{L^2(L^2)}
\end{align*}
and combining the above estimate with \eqref{changement inconnu dans inegalite de dualite}, we obtain the following observability estimate for solutions $(\sigma, z)$ of \eqref{NSK quantique Sl*}
\begin{align*}
    \|\sigma e^{-s_0\Phi}\|_{L^2(H^{-1})}+\|&\sigma(0)e^{-s_0\Phi(0)}\|_{H^{-2}}+\|ze^{-\frac{4s_0\Phi}{3}}\|_{L^2(L^2)}+\|z(0)e^{-\frac{4s_0\Phi(0)}{3}}\|_{H^{-1}}\\
    &\lesssim \|(g_{\sigma},g_z)e^{-\frac{3s_0\Phi}{4}}\|_{L^2(H^{-3})\times L^2(H^{-2})}+\|\chi (\sigma,z)e^{-\frac{3s_0\Phi}{4}}\|_{L^2(H^{-1})\times L^2(L^2)}.
\end{align*}
\textit{\underline{Step 3: Conclusion.}} Since solutions $(\sigma, z)$ of \eqref{NSK quantique Sl*}
 satisfy the above observability estimate, we again argue by duality to deduce that System \eqref{NSK quantique Sl} is controllable and that the following estimate holds
\begin{align*}
\|(a,u)e^{\frac{3s_0\Phi}{4}}\|_{L^2(H^3)\times L^2(H^2)}&+\|\chi(v_a,v_u)e^{\frac{3s_0\Phi}{4}}\|_{L^2(H^1)\times L^2(L^2)} \\
&\lesssim\|(f_ae^{s_0\Phi},f_ue^{\frac{4s_0\Phi}{3}})\|_{L^2(H^1)\times L^2(L^2)}+\|(a_0e^{s_0\Phi(0)},u_0e^{\frac{4s_0\Phi(0)}{3}})\|_{H^2\times H^1}\\
&\lesssim\|(f_a,f_u)e^{\frac{4s_0\Phi}{3}})\|_{L^2(H^1)\times L^2(L^2)}+\|(a_0,u_0)e^{\frac{4s_0\Phi(0)}{3}})\|_{H^2\times H^1}.
\end{align*}
Then, it remains to establish the regularity estimate \eqref{4.2 EGG}. We perform the regularity estimate on the equation satisfied by $(a,u)e^{\frac{2s_0\Phi}{3}}$, that induces a small loss in the parameter $s_0$, which is reflected in the fact that we estimate $(a,u)e^{\frac{2s_0\Phi}{3}}$ instead of $(a,u)e^{\frac{3s_0\Phi}{4}}$ to apply the above estimate, and \eqref{4.2 EGG} follows.

Finally, it can be easily checked that the above control process, based on duality arguments, provides a linear operator $\mathcal{G}$, which, to any initial conditions $(a_0,u_0) \in H^2 \times H^1$ and source terms $(f_a,f_u)$ such that $(f_a,f_u)e^{\frac{4s_0\Phi}{3}} \in L^2(H^1)\times L^2(L^2)$, provides a controlled trajectory $(a,u)$ in $L^2(H^3)\times L^2(H^2)$ for \eqref{NSK quantique Sl}, as claimed in Theorem \ref{theorem 4.1 EGG}.
\end{proof}

\section{Proof of Theorem \ref{Controle intern de NSK quantique sur le tore}}\label{Sec-Proof-Main}
\begin{proof}[Proof of Theorem \ref{Controle intern de NSK quantique sur le tore}]
In order to prove the controllability of System \eqref{NSK quantique control}, we will perform a fixed-point argument.\\ 
We start by fixing the parameter $s_0$ so that Theorem \ref{theorem 4.1 EGG} holds.

We then define the space $\mathbf{X}\times\mathbf{Y}$, where $\mathbf{X}:=L^2(H^3)\cap L^{\infty}(H^2)\cap H^1(H^1)$ and $\mathbf{Y}:=L^2(H^2)\cap L^{\infty}(H^1)\cap H^1(L^2)$, and the following closed subset
$$
B_R:=\enstq{(\Tilde{a},\Tilde{u})\in \mathbf{X}\times\mathbf{Y}}{\|(\Tilde{a},\Tilde{u})e^{\frac{2s_0\Phi}{3}}\|_{\mathbf{X}\times\mathbf{Y}}+\|\partial_t(\Tilde{a},\Tilde{u})e^{\frac{2s_0\Phi}{3}}\|_{L^2(H^1)\times L^2(L^2)}\leq R},
$$
where $R$ is a positive real number which will be chosen later. Let $(a_0,u_0)\in H^3(\T_L)\times H^2(\T_L)$. Our goal is to find a fixed-point of the map given as follows
\begin{equation}
    \label{Definition de l'application pour point fixe}
    \mathcal{F}(\Tilde{a},\Tilde{u}):=\mathcal{G}\left(a_0,u_0,f_a(\Tilde{a},\Tilde{u}), f_u(\Tilde{a},\Tilde{u})\right),
\end{equation}
for $(\Tilde{a},\Tilde{u})$ in $B_R$ and, $f_a(\Tilde{a},\Tilde{u})$ and $f_u(\Tilde{a},\Tilde{u})$ are given (recall \eqref{definition non linearite}) by
\begin{equation*}
\left \{
\begin{array}{lcr}
  f_{a}(\Tilde{a},\Tilde{u}) :=-\Tilde{u}\cdot \nabla \Tilde{a}, \\
  f_{u}(\Tilde{a},\Tilde{u}) :=f_{u}^{1}(\Tilde{a},\Tilde{u})+f_{u}^{2}(\Tilde{a},\Tilde{u})+f_{u}^{3}(\Tilde{a},\Tilde{u})+f_{u}^{4}(\Tilde{a},\Tilde{a})+f_{u}^{5}(\Tilde{a},\Tilde{a}).
\end{array}
\right.
\end{equation*}
with
\begin{equation*}
\begin{cases}
f_{u}^{1}(\Tilde{a},\Tilde{u}):=-(\Tilde{a}+1)\Tilde{u}\cdot\nabla \Tilde{u},\\
\displaystyle f^{2}_{u}(\Tilde{a},\Tilde{u}):=\divergence\left(2\underline{\mu}(\Tilde{a})D_S(\Tilde{u}))+\nabla(\underline{\nu}(\Tilde{a})\divergence{\Tilde{u}})\right),\\
\displaystyle f_{u}^{3}(\Tilde{a},\Tilde{u}):=\partial_t\Tilde{u}\Tilde{a},\\
\displaystyle f_{u}^{4}(\Tilde{a},\Tilde{a}):= \underline{P}'(\Tilde{a})\nabla \Tilde{a},\\
\displaystyle f_{u}^{5}(\Tilde{a},\Tilde{a}):=(\Tilde{a}+1)\nabla\Big(\underline{\kappa}(\Tilde{a}) \Laplace \Tilde{a}+\nabla\underline{\kappa}(\Tilde{a})\cdot\nabla \Tilde{a}\Big).
\end{cases}
\end{equation*}
For this purpose, we prove that:
\begin{enumerate}
    \item for $R>0$ small enough, $\mathcal{F}$ is well-defined on $B_R$;
    \item for $R>0$ and $\delta>0$ small enough, we have $\mathcal{F}(B_R)\subset B_R$;
    \item for $R>0$ and $\delta>0$ small enough, $\mathcal{F}$ is a strict contraction from $B_R$ to $B_R$.
\end{enumerate}
We will next conclude by the application of Banach Picard fixed-point theorem.
In all that follows, the constant implied by the symbol $\lesssim$ is assumed to be independent from $R$, the parameter $\delta$ in \eqref{condition de petitesse pour donne initial tore}.

\noindent\textit{\underline{Step 1: $\mathcal{F}$ is well-defined on $B_R$ for all $R \in (0,R_\star]$, with $R_\star$ small enough.}} We begin by showing the following lemma.
\begin{lem}
\label{Estimation des terms nonlinear lemme}
Let $s_0\geq 1$ as in Theorem \ref{theorem 4.1 EGG}. There exist positive real numbers $R_{\star}$ and $C>0$ such that for any $R\in(0,R_{\star}]$, such that for all $(\Tilde{a},\Tilde{u})\in 
B_R$ the quantities $f_{a}(\Tilde{a},\Tilde{u})$ and $f_{u}(\Tilde{a},\Tilde{u})$ are well-defined and
\begin{equation}
\label{Estimation des terms nonlinear}
\|(f_a(\Tilde{a},\Tilde{u}), f_u(\Tilde{a},\Tilde{u}))e^{\frac{4s_0\Phi}{3}})\|_{L^2(H^1)\times L^2(L^2)}\leq C R^{2}.
\end{equation}
\end{lem}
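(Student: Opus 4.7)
The plan is to estimate each piece of the nonlinearity $f_a$ and $f_u^1,\ldots,f_u^5$ separately, relying on three structural observations. First, the weight splits multiplicatively as $e^{4s_0\Phi/3}=(e^{2s_0\Phi/3})^2$, so for any bilinear expression $fg$ one may write $(fg)\,e^{4s_0\Phi/3}=(f\,e^{2s_0\Phi/3})(g\,e^{2s_0\Phi/3})$ and bound it in appropriate norms by a product of two factors, each controlled by $R$ by definition of $B_R$. Thus an $R^2$ scaling is automatic as long as every term in the nonlinearity can be realized as an at-least bilinear expression in $(\tilde a,\tilde u)$.

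The second ingredient is a Taylor expansion of the coefficient functions at $0$: by \textbf{(H2)}, $\underline{\mu},\underline{\nu}\in\mathcal{C}^2$ and $\underline{\kappa},\underline{P}'\in\mathcal{C}^3$ on a neighborhood of $0$, and they all vanish at $\tilde a=0$ by construction. Choosing $R_\star>0$ small enough that $\|\tilde a\|_{L^\infty_{t,x}}\lesssim\|\tilde a\|_{L^\infty(H^2)}\leq R_\star$ remains strictly inside this neighborhood (via $H^2\hookrightarrow L^\infty$ valid in $d\leq 3$) yields uniform pointwise bounds
\begin{equation*}
|\underline{\mu}(\tilde a)|+|\underline{\nu}(\tilde a)|+|\underline{\kappa}(\tilde a)|+|\underline{P}'(\tilde a)|\lesssim|\tilde a|,
\end{equation*}
together with uniform bounds on the relevant derivatives. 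This extra factor of $|\tilde a|$ converts the nominally linear-in-$\tilde a$ pieces $f_u^2$, $f_u^4$ and the inner coefficient of $f_u^5$ into genuinely bilinear (or higher-order) expressions. The third ingredient consists of the Sobolev embeddings $H^2\hookrightarrow L^\infty$, $H^3\hookrightarrow W^{1,\infty}$ and $H^1\hookrightarrow L^6$ valid for $d\in\{1,2,3\}$, together with H\"older and interpolation inequalities in time and space.

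With these tools at hand, $f_a$ and $f_u^1,f_u^3,f_u^4$ are handled in a routine way: each decomposes, after Taylor-expanding the coefficients if needed, into a product of one norm controlled by the $\mathbf{X}$-component and one by the $\mathbf{Y}$-component of the norm defining $B_R$. The bound on $\|\partial_t(\tilde a,\tilde u)\,e^{2s_0\Phi/3}\|_{L^2(H^1)\times L^2(L^2)}$ built into $B_R$ is crucially used in $f_u^3=(\partial_t\tilde u)\tilde a$, and the $H^1(H^1)$-regularity of $\tilde a$ enters in the $H^1$-estimate of $f_a$. The main obstacle is the capillarity term $f_u^5=(\tilde a+1)\nabla\bigl(\underline{\kappa}(\tilde a)\Laplace\tilde a+\nabla\underline{\kappa}(\tilde a)\cdot\nabla\tilde a\bigr)$, which upon expansion reads
\begin{equation*}
(\tilde a+1)\bigl[\underline{\kappa}(\tilde a)\nabla\Laplace\tilde a+\underline{\kappa}'(\tilde a)(\Laplace\tilde a)\nabla\tilde a+2\underline{\kappa}'(\tilde a)\nabla^2\tilde a\cdot\nabla\tilde a+\underline{\kappa}''(\tilde a)|\nabla\tilde a|^2\nabla\tilde a\bigr],
\end{equation*}
and contains third derivatives of $\tilde a$. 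The top-order piece $(\tilde a+1)\underline{\kappa}(\tilde a)\nabla\Laplace\tilde a$ is still controlled in $L^2(L^2)$ by $\|\tilde a\|_{L^\infty_{t,x}}\|\tilde a\|_{L^2(H^3)}$ thanks to $\underline{\kappa}(0)=0$ together with the $L^\infty$-boundedness of $\tilde a+1$; the remaining second-order terms are bounded via $\|\nabla\tilde a\|_{L^2(L^\infty)}\lesssim\|\tilde a\|_{L^2(H^3)}$ and $\|\Laplace\tilde a\|_{L^\infty(L^2)}\lesssim\|\tilde a\|_{L^\infty(H^2)}$, while the cubic term uses $H^3\hookrightarrow W^{1,\infty}$. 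Summing all contributions, each weighted consistently via the split $e^{4s_0\Phi/3}=(e^{2s_0\Phi/3})^2$, yields the desired bound \eqref{Estimation des terms nonlinear}, the smallness of $R_\star$ being used both to keep $\tilde a$ inside the regularity domain of the coefficient functions and to make the $\|\tilde a\|_{L^\infty_{t,x}}$ factors uniformly bounded.
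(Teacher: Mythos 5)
Your proof follows essentially the same route as the paper: shrink $R_\star$ via $H^2\hookrightarrow L^\infty$ to keep $\tilde a$ inside the regularity domain from {\bf (H2)}, exploit the multiplicative split $e^{4s_0\Phi/3}=(e^{2s_0\Phi/3})^2$ so that each term pairs two weighted factors each of size $R$, invoke the vanishing at $0$ of $\underline\mu,\underline\nu,\underline\kappa,\underline{P}'$ via a composition/Taylor lemma (the paper's Lemma~\ref{I 31/08/2023}), and close with Sobolev product estimates. The only cosmetic difference is in $f_u^5$: you expand the outer $\nabla$ and treat each of the four resulting pieces separately, whereas the paper keeps $\nabla(\cdot)$ intact and bounds the inner expression in $L^2(H^1)$ via $H^2\times H^1\to H^1$ — this avoids having to handle $\underline\kappa',\underline\kappa''$ pointwise, but both versions are valid. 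One small inaccuracy to flag: you claim the $H^1(H^1)$ norm of $\tilde a$ (i.e., $\partial_t\tilde a\in L^2(H^1)$) is used in the $H^1$-estimate of $f_a$; in fact $f_a=-\tilde u\cdot\nabla\tilde a$ is controlled purely by $\tilde u\in L^\infty(H^1)$ and $\tilde a\in L^2(H^3)$ via the product rule $H^1\times H^2\to H^1$, and the time-derivative bound is only genuinely exercised in $f_u^3=(\partial_t\tilde u)\tilde a$.
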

Estimate of the nonlinear terms are based on the following classical lemma, whose proof is left to the reader as it is an adaptation of \cite[Lemma 4.10.2 p.134]{SemilinearSchrodingerEquations}.
\begin{lem}\label{I 31/08/2023}
Let $\ell>\frac{d}{2}$ be an integer and $\eta$ a positive real number. Let $F$ be a function in $\mathcal{C}^{\ell}([-\eta,\eta])$ such that $F(0)=0$. Then there exists a constant $C$ such that for any $u$ and $v$ in $H^\ell(\T_L)$ satisfy such that $\|u\|_{L^{\infty}}+\|v\|_{L^{\infty}}+\|u\|_{H^\ell} \leq \eta$, we have
$$
\|F(u)\|_{H^\ell}\leq C\|u\|_{H^\ell}\ \ \ \text{and}\ \ \ \|F(u)-F(v)\|_{H^\ell}\leq C\|u-v\|_{H^\ell}.
$$
\end{lem}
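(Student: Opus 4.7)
The plan is to establish both bounds by the classical Moser composition estimates. The two key ingredients are the Sobolev embedding $H^\ell(\T_L)\hookrightarrow L^\infty(\T_L)$ (valid since $\ell>d/2$) and the algebra-type inequality $\|fg\|_{H^\ell}\lesssim \|f\|_{L^\infty}\|g\|_{H^\ell}+\|f\|_{H^\ell}\|g\|_{L^\infty}$, which iterated yields the multilinear bound
$$
\Bigl\|\prod_{j=1}^{k}\partial^{\beta_j}u\Bigr\|_{L^2}\lesssim \|u\|_{L^\infty}^{k-1}\|u\|_{H^\ell}
$$
for all multi-indices satisfying $|\beta_j|\geq 1$ and $\sum_j|\beta_j|\leq \ell$.

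For the first estimate, since $F(0)=0$, I would first write $F(u)=u\int_0^1 F'(tu)\,dt$, which since $\|u\|_{L^\infty}\leq\eta$ controls $\|F(u)\|_{L^2}$ by $\|F'\|_{L^\infty([-\eta,\eta])}\|u\|_{L^2}$. Then for each multi-index $\alpha$ with $1\leq|\alpha|\leq \ell$, Faà di Bruno gives
$$
\partial^\alpha F(u)=\sum_{k=1}^{|\alpha|}\sum_{\substack{\beta_1+\cdots+\beta_k=\alpha\\ |\beta_j|\geq 1}}c_{\alpha,\beta}\,F^{(k)}(u)\prod_{j=1}^{k}\partial^{\beta_j}u.
$$
Each factor $F^{(k)}(u)$ is bounded in $L^\infty$ by $\|F^{(k)}\|_{L^\infty([-\eta,\eta])}$ since $\|u\|_{L^\infty}\leq \eta$ and $F\in\mathcal{C}^\ell([-\eta,\eta])$, and the multilinear bound above handles the product of derivatives. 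Summing over $\alpha$ yields $\|F(u)\|_{H^\ell}\leq C\|u\|_{H^\ell}$ with $C=C(F,\eta)$.

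For the Lipschitz bound, I would use the integral representation
$$
F(u)-F(v)=(u-v)\,G(u,v),\qquad G(u,v):=\int_0^1 F'\bigl(v+t(u-v)\bigr)\,dt,
$$
and apply the Moser algebra inequality
$$
\|F(u)-F(v)\|_{H^\ell}\lesssim \|u-v\|_{H^\ell}\|G(u,v)\|_{L^\infty}+\|u-v\|_{L^\infty}\|G(u,v)\|_{H^\ell}.
$$
Since $\|v+t(u-v)\|_{L^\infty}\leq\eta$ for every $t\in[0,1]$, the $L^\infty$-norm of $G$ is bounded by $\|F'\|_{L^\infty([-\eta,\eta])}$. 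To bound $\|G(u,v)\|_{H^\ell}$, I would apply the composition estimate from the first step, uniformly in $t\in[0,1]$, to the function $s\mapsto F'(s)-F'(0)$ (which vanishes at zero and belongs to $\mathcal{C}^{\ell-1}([-\eta,\eta])$; a direct inspection of Faà di Bruno shows that $\ell-1$ derivatives of $F'$, that is $\ell$ derivatives of $F$, are sufficient). Combined with the Sobolev embedding $\|u-v\|_{L^\infty}\lesssim \|u-v\|_{H^\ell}$, this gives the desired linear bound.

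The main obstacle is the mild asymmetry in the hypothesis, which bounds $\|u\|_{H^\ell}$ but only the $L^\infty$-norms of $u$ and $v$: the Lipschitz step needs $\|v\|_{H^\ell}$ to be controlled to bound $\|G(u,v)\|_{H^\ell}$. In the intended use of this lemma (the contraction argument on $B_R$ after Lemma \ref{Estimation des terms nonlinear lemme}), both arguments live in a fixed ball of $H^\ell$, so the constant $C$ is uniform; one should read the hypothesis as applying symmetrically to $u$ and $v$ in $H^\ell$, which is a minor ambiguity of the statement.
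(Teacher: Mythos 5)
The paper does not actually prove this lemma (it is ``left to the reader'' with a pointer to Cazenave's composition lemma), and your Fa\`a di Bruno/Moser route is precisely the standard argument behind that reference. Your proof of the first estimate is complete: the $L^2$ bound from $F(u)=u\int_0^1F'(tu)\,dt$, the Fa\`a di Bruno expansion for $1\leq|\alpha|\leq\ell$ (which only ever invokes $F^{(k)}$ for $k\leq\ell$), the uniform bound on $F^{(k)}(u)$ via $\|u\|_{L^\infty}\leq\eta$, and the Gagliardo--Nirenberg multilinear estimate fit together exactly as you describe. Your closing remark about the asymmetry of the hypothesis is also a correct diagnosis: the Lipschitz bound cannot hold without control of $\|v\|_{H^\ell}$, and the statement should be read with both arguments in a fixed $H^\ell$-ball.

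The Lipschitz half, however, has a genuine gap at the parenthetical claim that ``$\ell-1$ derivatives of $F'$, that is $\ell$ derivatives of $F$, are sufficient'' to bound $\|G(u,v)\|_{H^\ell}$. They are not: with $w_t:=v+t(u-v)$ and $|\alpha|=\ell$, the Fa\`a di Bruno expansion of $\partial^{\alpha}\bigl(F'(w_t)\bigr)$ contains the term $F^{(\ell+1)}(w_t)\prod_{j=1}^{\ell}\partial_{x_{i_j}}w_t$, so estimating $G$ in $H^\ell$ costs one more derivative of $F$ than the hypothesis provides; applying the first-step composition estimate to $F'-F'(0)\in\mathcal{C}^{\ell-1}$ only yields an $H^{\ell-1}$ bound on $G$, which does not feed the product inequality. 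The obstruction is not an artifact of the method: expanding $\partial^{\alpha}(F(u)-F(v))$ directly, the term $\bigl(F^{(\ell)}(u)-F^{(\ell)}(v)\bigr)\prod_{j=1}^{\ell}\partial_{x_{i_j}}u$ is controlled only by the modulus of continuity of $F^{(\ell)}$, and already for $d=\ell=1$ the choice $F(s)=\tfrac23 s|s|^{1/2}$ (so $F\in\mathcal{C}^1$, $F(0)=0$, $F'(s)=|s|^{1/2}$), $v=\delta\sin(2\pi x/L)$, $u=v+\epsilon$ makes $\|F(u)-F(v)\|_{H^1}/\|u-v\|_{H^1}$ diverge (logarithmically) as $\epsilon\to0$. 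So the second inequality is false for general $F\in\mathcal{C}^\ell$ and no proof can close it as stated; the correct hypothesis is that $F^{(\ell)}$ be Lipschitz on $[-\eta,\eta]$ (e.g.\ $F\in\mathcal{C}^{\ell+1}$), under which your argument goes through verbatim. This is ultimately a defect of the statement rather than of your strategy --- note that the paper itself invokes the Lipschitz bound with $\ell=2$ for $\underline{\mu}$ and $\underline{\nu}$, which {\bf (H2)} only makes $\mathcal{C}^2$, so the same extra derivative is implicitly being used there --- but a correct write-up must either strengthen the regularity assumption or restrict the second inequality to the functions for which it is available.
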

\begin{proof}[Proof of Lemma \ref{Estimation des terms nonlinear lemme}.]
Let us choose $R_{\star}>0$ such that if we denote by $K$ the constant of the Sobolev embedding $H^2(\T_L)\hookrightarrow L^{\infty}(\T_L)$, then
\begin{equation}
\label{I 30/08/2023}
KR_{\star}<\frac{\eta}{\rho_{\star}},
\end{equation}
where $\eta$ is given by {\bf (H2)}. In this case, if $(\Tilde{a},\Tilde{u})$ belongs in $B_{R_{\star}}$, then we have
\begin{equation}
\label{II 30/08/2023}
\|\Tilde{a}\|_{L^{\infty}([0,T]\times\T_L)}\leq K\|\Tilde{a}\|_{L^{\infty}(H^2)}<\frac{\eta}{\rho_{\star}},
\end{equation}
Accordingly, since  $\mu$ and $\nu$, and $\kappa$ and $P$ are respectively $\mathcal{C}^{2}$ and $\mathcal{C}^{3}$ in a neighborhood of $\rho_{\star}$ we can apply Lemma \ref{Estimation des terms nonlinear lemme} for any elements $\Tilde{a}$ such that $(\Tilde{a},\Tilde{u})$ belongs to $B_{R_{\star}}$, for some $\Tilde{u}$ in $\mathbf{Y}$. Let $R\in]0,R_{\star}]$ and $(\Tilde{a},\Tilde{u})$ in $B_R$.

We will repeatedly use that the weight function $\Phi$ depends only on the time variable and for $d\in\{1,2,3\}$ that the product is continuous from $H^1(\T_L)\times H^2(\T_L)$ to $H^1(\T_L)$ and from $H^2(\T_L)\times H^2(\T_L)$ to $H^2(\T_L)$. We will also repeatedly use that $4/3 = 2/3 + 2/3$. 
\\
\textit{Estimate on $f_a$.} We have 
$$
    \|f_a(\Tilde{a},\Tilde{u})e^{\frac{4s_0\Phi}{3}}\|_{L^2(H^1)}\lesssim \|\Tilde{u}e^{\frac{2s_0\Phi}{3}}\|_{L^{\infty}(H^1)}\|\nabla\Tilde{a}e^{\frac{2s_0\Phi}{3}}\|_{L^{2}(H^2)}
    \lesssim \|\Tilde{u}e^{\frac{2s_0\Phi}{3}}\|_{\mathbf{Y}}\|\Tilde{a}e^{\frac{2s_0\Phi}{3}}\|_{\mathbf{X}}.
$$
\textit{Estimate on $f^{1}_{u}$.} We have, according to \eqref{II 30/08/2023}
    \begin{align*}
        \|f_{u}^{1}(\Tilde{a},\Tilde{u})e^{\frac{4s_0\Phi}{3}}\|_{L^2(L^2)}\lesssim \|(\Tilde{a}+1)\|_{L^{\infty}(L^{\infty})}\|\Tilde{u}e^{\frac{2s_0\Phi}{3}}\|_{L^{2}(L^{\infty})}\|\nabla\Tilde{u}e^{\frac{2s_0\Phi}{3}}\|_{L^{\infty}(L^2)}\lesssim \|\Tilde{u}e^{\frac{2s_0\Phi}{3}}\|_{\mathbf{Y}}^{2},
    \end{align*}
\textit{Estimate on $f^{2}_{u}$.} We have
    \begin{align*}
        \|f_{u}^{2}(\Tilde{a}
        ,\Tilde{u})e^{\frac{4s_0\Phi}{3}}\|_{L^{2}(L^2)}
        &\lesssim \|\left(\divergence(2\underline{\mu}(\Tilde{a})D_S(\Tilde{u}))+\nabla(\underline{\nu}(\Tilde{a})\divergence{(\Tilde{u})})\right)e^{\frac{4 s_{0}\Phi}{3}}\|_{L^2(L^2)},
    \end{align*}
Moreover, we have
$$
\|\divergence(\underline{\mu}(\Tilde{a})D_{S}(\Tilde{u}))e^{\frac{4s_0\Phi}{3}}\|_{L^{2}(L^2)}\lesssim\|\underline{\mu}(\Tilde{a})e^{\frac{2s_0\Phi}{3}}\|_{L^{2}(H^2)}\|\Tilde{u}e^{\frac{2s_0\Phi}{3}}\|_{L^{2}(H^{2})}
$$
and
$$
\|\nabla(\underline{\nu}(\Tilde{a})\divergence(\Tilde{u}))e^{\frac{4s_0\Phi}{3}}\|_{L^{2}(L^2)}\lesssim\|\underline{\nu}(\Tilde{a})e^{\frac{2s_0\Phi}{3}}\|_{L^{\infty}(H^2)}\|\Tilde{u}e^{\frac{2s_0\Phi}{3}}\|_{L^{2}(H^{2})}.
$$
Furthermore, according to \eqref{II 30/08/2023} and Hypothesis {\bf (H2)}, we deduce from Lemma \ref{I 31/08/2023} that 
$$
\|\underline{\mu}(\Tilde{a})e^{\frac{2s_0\Phi}{3}}\|_{L^{\infty}(H^2)}\lesssim\|\Tilde{a}e^{\frac{2s_0\Phi}{3}}\|_{L^{\infty}(H^2)}\ \ \ \text{and}\ \ \ 
\|\underline{\nu}(\Tilde{a})e^{\frac{2s_0\Phi}{3}}\|_{L^{\infty}(H^2)}\lesssim\|\Tilde{a}e^{\frac{2s_0\Phi}{3}}\|_{L^{\infty}(H^2)}.
$$
Then we deduce that
$$
\|f_{u}^{2}(\Tilde{a},\Tilde{u})e^{\frac{4s_0\Phi}{3}}\|_{L^{2}(L^2)}\lesssim\|(\Tilde{a},\Tilde{u})e^{\frac{2s_0\Phi}{3}}\|_{\mathbf{X}\times\mathbf{Y}}^{2}.
$$
\textit{Estimate on $f_{u}^{3}$.} We first have
\begin{align*}
    \|f^{3}_{u}(\Tilde{a},\Tilde{u})e^{\frac{4s_0\Phi}{3}}\|_{L^{2}(L^2)}&\lesssim \|\Tilde{a}e^{\frac{2s_0\Phi}{3}}\|_{L^{\infty}(L^{\infty})}\|\partial_t\Tilde{u}e^{\frac{2s_0\Phi}{3}}\|_{L^{2}(L^2)}.\\
\end{align*}
Then, we have
$$
\|f^{3}_{u}(\Tilde{a},\Tilde{u})e^{\frac{4s_0\Phi}{3}}\|_{L^{2}(L^2)}\lesssim R^2.
$$
\textit{Estimate on $f_{u}^{4}$.} We have

    \begin{align*}
        \|f_{u}^{4}(\Tilde{a},\Tilde{a})e^{\frac{4s_0\Phi}{3}}\|_{L^2(L^2)} \lesssim\|\underline{P'}(\Tilde{a})e^{\frac{2s_0\Phi}{3}}\|_{L^{\infty}(H^2)}\|\Tilde{a}e^{\frac{2s_0\Phi}{3}}\|_{L^2(H^1)}.
    \end{align*}
 Thus, according to Lemma \ref{I 31/08/2023} applying to $\underline{P'}$, we conclude that
$$
\|f^{4}_{u}(\Tilde{a},\Tilde{a})e^{\frac{4s_0\Phi}{3}}\|_{L^{2}(H^1)}\lesssim\|(\Tilde{a},\Tilde{u})e^{\frac{2s_0\Phi}{3}}\|_{\mathbf{X}\times\mathbf{Y}}^{2}.
$$
\textit{Estimate on $f^{5}_{u}$.} According to \eqref{II 30/08/2023}, we have
\begin{align*}
    \|f_{u}^{5}(\Tilde{a},\Tilde{a})e^{\frac{4s_0\Phi}{3}}\|_{L^2(L^2)}&\lesssim \|\Tilde{a}+1\|_{L^{\infty}(L^{\infty})}\|\underline{\kappa}(\Tilde{a})\Laplace\Tilde{a}e^{\frac{4s_0\Phi}{3}}\|_{L^2(H^1)}+\|\Tilde{a}+1\|_{L^{\infty}(L^{\infty})}\|\nabla\underline{\kappa}(\Tilde{a})\cdot\nabla\Tilde{a}e^{\frac{4s_0\Phi}{3}}\|_{L^2(H^1)}\\
    &\lesssim\|\underline{\kappa}(\Tilde{a})e^{\frac{2s_0\Phi}{3}}\|_{L^{\infty}(H^2)}\|\Tilde{a}e^{\frac{2s_0\Phi}{3}}\|_{L^2(H^3)}+\|\underline{\kappa}(\Tilde{a})e^{\frac{2s_0\Phi}{3}}\|_{L^{\infty}(H^2)}\|\Tilde{a}e^{\frac{2s_0\Phi}{3}}\|_{L^2(H^3)}\\
    &\lesssim\|\underline{\kappa}(\Tilde{a})e^{\frac{2s_0\Phi}{3}}\|_{L^{\infty}(H^2)}\|\Tilde{a}e^{\frac{2s_0\Phi}{3}}\|_{\mathbf{X}}.
\end{align*}
In view of Hypothesis {\bf (H2)} and Lemma \ref{I 31/08/2023}, it follows that
$$
\|\underline{\kappa}(\Tilde{a})e^{\frac{2s_0\Phi}{3}}\|_{L^{\infty}(H^2)}\lesssim\|\Tilde{a}e^{\frac{2s_0\Phi}{3}}\|_{L^{\infty}(H^2)}.
$$
Then, we obtain that
$$
\|f^{5}_{u}(\Tilde{a},\Tilde{a})e^{\frac{4s_0\Phi}{3}}\|_{L^2(L^2)}\lesssim\|(\Tilde{a},\Tilde{u})e^{\frac{2s_0\Phi}{3}}\|_{\mathbf{X}\times\mathbf{Y}}^{2}.
$$
Combining all the above estimates, we conclude that for all $(\Tilde{a},\Tilde{u})\in B_R$
\begin{equation*}
\label{estimation bilinear over fa}
    \|f_{a}(\Tilde{a},\Tilde{u})e^{\frac{4s_0\Phi}{3}}\|_{L^{2}(H^1)}+\|f_{u}(\Tilde{a},\Tilde{u})e^{\frac{4s_0\Phi}{3}}\|_{L^{2}(L^2)}\leq CR^2,
\end{equation*}
where $C$ is a positive constant independent of $R$, which concludes the proof of Lemma \ref{Estimation des terms nonlinear lemme}.
\end{proof}
Let $R \in (0,R_{\star}]$. From Lemma \ref{Estimation des terms nonlinear lemme}, we deduce that if $(\Tilde{a},\Tilde{u})$ belongs to $B_{R}$, then $f_a(\Tilde{a},\Tilde{u})e^{\frac{4s_0\Phi}{3}}\in L^2(H^1)$ and $f_u(\Tilde{a},\Tilde{
u})e^{\frac{4s_0\Phi}{3}}\in L^2(L^2)$. Since $(a_0,u_0)$ belongs to $H^2(\T_L)\times H^1(\T_L)$, this shows, by using Estimate \eqref{4.2 EGG} of Theorem \ref{theorem 4.1 EGG}, that for all $(\Tilde{a},\Tilde{u})$ in $B_R$ the definition of $\mathcal{F}$ by \eqref{Definition de l'application pour point fixe} is meaningful. Furthermore, according to \eqref{4.2 EGG} and \eqref{Estimation des terms nonlinear}, it follows that for all $(\Tilde{a},\Tilde{u})$ in $B_R$ we have
\begin{equation}
    \label{Borne sur F avec R}
    \|\partial_t\mathcal{F}(\Tilde{a},\Tilde{u})e^{\frac{2s_0\Phi}{3}}\|_{L^2(H^1)\times L^2(L^2)}+\|\mathcal{F}(\Tilde{a},\Tilde{u})e^{\frac{2s_0\Phi}{3}}\|_{\mathbf{X}\times\mathbf{Y}}\leq C\left(R^2+\|e^{s_0\Phi(0)}(a_0,u_0)\|_{H^2\times H^1}\right).
\end{equation}
\underline{\textit{Step 2 : For $R>0$ and $\delta>0$ small enough, $\mathcal{F}(B_R)\subset B_R$.}} From the estimate \eqref{Borne sur F avec R} and the smallness assumption \eqref{condition de petitesse pour donne initial tore}, we obtain
$$
\|\partial_t\mathcal{F}(\Tilde{a},\Tilde{u})e^{\frac{2s_0\Phi}{3}}\|_{L^2(H^1)\times L^2(L^2)}+\|\mathcal{F}(\Tilde{a},\Tilde{u})e^{\frac{2s_0\Phi}{3}}\|_{\mathbf{X}\times\mathbf{Y}}\leq \Tilde{C}(R^2+\delta),
$$
where $\Tilde{C}:=Ce^{s_0\Phi(0)}$. Then, setting $R_0:=\displaystyle \min\left(\frac{1}{\Tilde{C}},R_{\star}\right)$, for all $R \in (0,R_0)$, there exists a positive real number $\delta_R$, given by 
\begin{equation}
    \label{definition de delta R}
    \delta_R:=\frac{R}{\Tilde{C}}-R^2,
\end{equation}
such that, if 
\begin{equation*}
    \|(a_0,u_0)\|_{H^2\times H^1}\leq \delta_R,
\end{equation*}
for all $(\tilde a, \tilde u)$ in $B_R$, we have the bound 
\begin{equation*}
    \|\partial_t\mathcal{F}(\Tilde{a},\Tilde{u})e^{\frac{2s_0\Phi}{3}}\|_{L^2(H^1)\times L^2(L^2)}+\|\mathcal{F}(\Tilde{a},\Tilde{u})\|_{\mathbf{X}\times\mathbf{Y}}\leq R.
\end{equation*}
This shows that for all $R\in (0,R_0]$, if $\delta \leq \delta_R$, then $\mathcal{F}(B_R)\subset B_R$. From now on, for any $R\in (0,R_0]$, the smallness of the initial data parameter $\delta$ in \eqref{condition de petitesse pour donne initial tore} will be automatically $\delta_R$.
\\
\underline{\textit{Step 3: $\mathcal{F}$ is a strict contraction from $(B_R,d)$} into itself.} On $B_R$, we consider the distance $d$ given by
$$
d(V,W):=\|(V-W)e^{\frac{2s_0\Phi}{3}}\|_{\mathbf{X}\times\mathbf{Y}}+\|\partial_t(V-W)e^{\frac{2s_0\Phi}{3}}\|_{L^2(H^1)\times L^2(L^2)}\ \ \ \ \ \ \left(V, W\in B_R\right).
$$
\\
This step is based on the following lemma.
\begin{lem}
\label{Estimation des terms nonlinear lemme contraction}
Let $s_0\geq 1$ as in Theorem \ref{theorem 4.1 EGG}. There exist $C>0$ such that for any $R\in(0,R_{0}]$, for all $(\Tilde{a}_1,\Tilde{u}_1)$ and $(\Tilde{a}_2,\Tilde{u}_2)$ in 
$B_R$ we have
\begin{equation*}
\label{Estimation des terms nonlinear contraction}
    d\!\left(\mathcal{F}(\Tilde{a}_1,\Tilde{u}_1),\mathcal{F}(\Tilde{a}_2,\Tilde{u}_2)\right)\leq CRd\!\left((\Tilde{a}_1,\Tilde{u}_1),(\Tilde{a}_2,\Tilde{u}_2)\right).
\end{equation*}
\end{lem}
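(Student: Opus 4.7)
The plan is to exploit the linearity of the operator $\mathcal{G}$ provided by Theorem \ref{theorem 4.1 EGG} and then estimate the differences of the non-linearities in a manner parallel to the proof of Lemma \ref{Estimation des terms nonlinear lemme}, extracting in each case a factor $R$ through a telescoping trick.

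Since $\mathcal{G}$ is linear in its arguments $(a_0,u_0,f_a,f_u)$, for any $(\Tilde{a}_1,\Tilde{u}_1)$ and $(\Tilde{a}_2,\Tilde{u}_2)$ in $B_R$ we have
$$
\mathcal{F}(\Tilde{a}_1,\Tilde{u}_1)-\mathcal{F}(\Tilde{a}_2,\Tilde{u}_2)=\mathcal{G}\!\left(0,0,f_a(\Tilde{a}_1,\Tilde{u}_1)-f_a(\Tilde{a}_2,\Tilde{u}_2),f_u(\Tilde{a}_1,\Tilde{u}_1)-f_u(\Tilde{a}_2,\Tilde{u}_2)\right),
$$
so that, by Estimate \eqref{4.2 EGG}, the quantity $d\!\left(\mathcal{F}(\Tilde{a}_1,\Tilde{u}_1),\mathcal{F}(\Tilde{a}_2,\Tilde{u}_2)\right)$ is controlled by $\|(f_a(\Tilde{a}_1,\Tilde{u}_1)-f_a(\Tilde{a}_2,\Tilde{u}_2),f_u(\Tilde{a}_1,\Tilde{u}_1)-f_u(\Tilde{a}_2,\Tilde{u}_2))e^{\frac{4s_0\Phi}{3}}\|_{L^2(H^1)\times L^2(L^2)}$, up to a multiplicative constant independent of $R$.

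I would then estimate each nonlinear difference term by term. For the transport term, I would write
$$
f_a(\Tilde{a}_1,\Tilde{u}_1)-f_a(\Tilde{a}_2,\Tilde{u}_2)=-\Tilde{u}_1\cdot\nabla(\Tilde{a}_1-\Tilde{a}_2)-(\Tilde{u}_1-\Tilde{u}_2)\cdot\nabla\Tilde{a}_2,
$$
and bound each summand by the product of a $B_R$-norm and a difference norm, using the algebra structure $H^1\cdot H^2\hookrightarrow H^1$ and splitting the weight $e^{\frac{4s_0\Phi}{3}}=e^{\frac{2s_0\Phi}{3}}e^{\frac{2s_0\Phi}{3}}$. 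The same device handles $f_u^1$ (algebra in dimensions $d\leq 3$) and $f_u^3=(\partial_t\Tilde{u})\Tilde{a}$, where the time derivative norm embedded in $d$ plays a role. For the terms $f_u^2$, $f_u^4$ and $f_u^5$, involving the non-linear functions $\underline{\mu},\underline{\nu},\underline{P}',\underline{\kappa}$, I would telescope differences of the form
$$
\underline{\kappa}(\Tilde{a}_1)\Laplace\Tilde{a}_1-\underline{\kappa}(\Tilde{a}_2)\Laplace\Tilde{a}_2=\underline{\kappa}(\Tilde{a}_1)\Laplace(\Tilde{a}_1-\Tilde{a}_2)+(\underline{\kappa}(\Tilde{a}_1)-\underline{\kappa}(\Tilde{a}_2))\Laplace\Tilde{a}_2,
$$
and analogously for $\underline{\mu},\underline{\nu},\underline{P}'$. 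Thanks to \eqref{I 30/08/2023}--\eqref{II 30/08/2023}, one has $\|\Tilde{a}_i\|_{L^\infty(L^\infty)}<\eta/\rho_\star$ for $i\in\{1,2\}$, so the Lipschitz estimate of Lemma \ref{I 31/08/2023} yields
$$
\|(\underline{\kappa}(\Tilde{a}_1)-\underline{\kappa}(\Tilde{a}_2))e^{\frac{2s_0\Phi}{3}}\|_{L^\infty(H^2)}\lesssim\|(\Tilde{a}_1-\Tilde{a}_2)e^{\frac{2s_0\Phi}{3}}\|_{L^\infty(H^2)},
$$
and analogously for $\underline{\mu},\underline{\nu},\underline{P}'$, so each telescoped piece is estimated verbatim as in Lemma \ref{Estimation des terms nonlinear lemme}.

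Collecting all contributions yields, for any $R\in(0,R_0]$,
$$
\|(f_a(\Tilde{a}_1,\Tilde{u}_1)-f_a(\Tilde{a}_2,\Tilde{u}_2),f_u(\Tilde{a}_1,\Tilde{u}_1)-f_u(\Tilde{a}_2,\Tilde{u}_2))e^{\frac{4s_0\Phi}{3}}\|_{L^2(H^1)\times L^2(L^2)}\lesssim R\, d\!\left((\Tilde{a}_1,\Tilde{u}_1),(\Tilde{a}_2,\Tilde{u}_2)\right),
$$
which, combined with \eqref{4.2 EGG}, delivers the required contraction estimate. The main technical point will be to push the Lipschitz bound of Lemma \ref{I 31/08/2023} through the $L^\infty(H^2)$ norm with the weight $e^{\frac{2s_0\Phi}{3}}$, but since $\Phi$ depends only on time this is a direct adaptation, exactly as in Lemma \ref{Estimation des terms nonlinear lemme}.
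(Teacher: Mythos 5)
Your proposal is correct and follows essentially the same route as the paper: you invoke the linearity of $\mathcal{G}$ together with Estimate \eqref{4.2 EGG} to reduce to bounding the difference of the nonlinearities, then telescope each of $f_a, f_u^1,\dots, f_u^5$, apply the Lipschitz bound of Lemma \ref{I 31/08/2023} for the composed coefficients $\underline{\mu},\underline{\nu},\underline{P}',\underline{\kappa}$ thanks to \eqref{I 30/08/2023}--\eqref{II 30/08/2023}, and split the weight via $4/3 = 2/3 + 2/3$ exactly as in Lemma \ref{Estimation des terms nonlinear lemme}. Your observation that the time-derivative component of $d$ is what absorbs the term $(\partial_t(\Tilde{u}_1-\Tilde{u}_2))\Tilde{a}$ coming from telescoping $f_u^3$ is also the right point to flag.
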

The proof is based on Lemma \ref{I 31/08/2023} and follows the same lines as the one of Lemma \ref{Estimation des terms nonlinear lemme}. 
\begin{proof}[Proof of Lemma \ref{Estimation des terms nonlinear lemme contraction}.]
Let $(\Tilde{a}_1,\Tilde{u}_1)$ and $(\Tilde{a}_2,\Tilde{u}_2)$ be two elements of $B_R$. As in the proof of Lemma \ref{Estimation des terms nonlinear lemme}, we use systematically that $4/3=2/3+2/3$ and the continuity of the product from $H^1(\T_L)\times H^{2}(\T_L)$ to $H^{1}(\T_L)$ and from $H^{2}(\T_L)\times H^{2}(\T_L)$ to $H^2(\T_L)$.
\\
\textit{Estimate on $f_a$.} We can write
$$
f_a(\Tilde{a}_1,\Tilde{u}_1)-f_a(\Tilde{a}_2,\Tilde{u}_2)=f_a(\Tilde{a}_1,\Tilde{u}_1-\Tilde{u}_2)+f_u(\Tilde{a}_1-\Tilde{a}_2,\Tilde{u}_2).
$$
Then, as in the proof of Lemma \ref{Estimation des terms nonlinear lemme}, we have
\begin{align*}
    \|(f_a(\Tilde{a}_1,\Tilde{u}_1)-f_a(\Tilde{a}_2,\Tilde{u}_2))e^{\frac{4s_0\Phi}{3}}\|_{L^2(H^1)}
    &\lesssim Rd\!\left((\Tilde{a}_1,\Tilde{u}_1),(\Tilde{a}_2,\Tilde{u}_2)\right).
\end{align*}
\textit{Estimate on $f_{u}^{1}$.} Let us first note that
$$
f^{1}_{u}(\Tilde{a}_1,\Tilde{u}_1)-f^{1}_{u}(\Tilde{a}_2,\Tilde{u}_2)=-(\Tilde{a}_1-\Tilde{a}_2)\Tilde{u}_1\cdot\nabla\Tilde{u}_1-\Tilde{a}_2(\Tilde{u}_1-\Tilde{u}_2)\cdot\nabla\Tilde{u}_1-\Tilde{a}_2\Tilde{u}_2\cdot\nabla(\Tilde{u}_1-\Tilde{u}_2).
$$
Then, we have
\begin{align*}
\|(f^{1}_{u}(\Tilde{u}_1,\Tilde{u}_1)-f^{1}_{u}(\Tilde{u}_2,\Tilde{u}_2))e^{\frac{4s_0\Phi}{3}}\|_{L^2(L^2)}&\lesssim \|(\Tilde{a}_1-\Tilde{a}_2)\|_{L^{\infty}(L^{\infty})}\|\Tilde{u}_1e^{\frac{2s_0\Phi}{3}}\|_{L^{2}(L^{\infty})}\|\Tilde{u}_1e^{\frac{2s_0\Phi}{3}}\|_{L^{\infty}(H^1)}\\
&\ \ +\|\Tilde{a}_2\|_{L^{\infty}(L^{\infty})}\|(\Tilde{u}_1-\Tilde{u}_{2})e^{\frac{2s_0\Phi}{3}}\|_{L^{2}(L^{\infty})}\|\Tilde{u}_1e^{\frac{2s_0\Phi}{3}}\|_{L^{\infty}(H^1)}\\
&\ \ +\|\Tilde{a}_2\|_{L^{\infty}(L^{\infty})}\|\Tilde{u}_{2}e^{\frac{2s_0\Phi}{3}}\|_{L^{2}(L^{\infty})}\|(\Tilde{u}_1-\Tilde{u}_2)e^{\frac{2s_0\Phi}{3}}\|_{L^{\infty}(H^1)}\\
&\lesssim Rd((\Tilde{a}_1,\Tilde{u}_1),(\Tilde{a}_1,\Tilde{u}_2)).
\end{align*}
\textit{Estimate on $f^{2}_{u}$.} We first remark that
\begin{align*}
f_{v}^{2}(\Tilde{a}_1,\Tilde{u}_1)-f_{v}^{2}(\Tilde{a}_2,\Tilde{u}_2)&=2\divergence((\underline{\mu}(\Tilde{a}_1)-\underline{\mu}(\Tilde{a}_2))D_S\Tilde{u}_1)+\nabla((\underline{\nu}(\Tilde{a}_1)-\underline{\nu}(\Tilde{a}_2))\divergence(\Tilde{u}_1))\\
&\ \ \ +2\divergence(\underline{\mu}(\Tilde{a}_2)D_S(\Tilde{u}_1-\Tilde{u}_2))+\nabla(\underline{\nu}(\Tilde{a}_2)\divergence(\Tilde{u}_1-\Tilde{u}_2)).
\end{align*}
We will apply Lemma \ref{I 31/08/2023} with $F=\underline{\mu}$ and $F=\underline{\nu}$ respectively according to Hypothesis {\bf (H2)} and \eqref{II 30/08/2023}. Moreover, we have
\begin{align*}
    \|[2\divergence((\underline{\mu}(\Tilde{a}_1)-\underline{\mu}(\Tilde{a}_2))&D_S\Tilde{u}_1)+\nabla((\underline{\nu}(\Tilde{a}_1)-\underline{\nu}(\Tilde{a}_2))\divergence(\Tilde{u}_1))]e^{\frac{4s_0\Phi}{3}}\|_{L^2(L^2)}\\
    &\lesssim R\|(\underline{\mu}(\Tilde{a}_1)-\underline{\mu}(\Tilde{a}_2))e^{\frac{2s_0\Phi}{3}}\|_{L^{\infty}(H^2)}+R\|(\underline{\nu}(\Tilde{a}_1)-\underline{\nu}(\Tilde{a}_2))e^{\frac{2s_0\Phi}{3}}\|_{L^{\infty}(H^2)}\\
    &\lesssim Rd((\Tilde{a}_1,\Tilde{u}_1),(\Tilde{a}_2,\Tilde{u}_2)).
\end{align*}
We also have
\begin{align*}
\|[2\divergence(\underline{\mu}(\Tilde{a}_2)D_S(\Tilde{u}_1-\Tilde{u}_2))+\nabla(\underline{\nu}(\Tilde{a}_2)\divergence(\Tilde{u}_1-\Tilde{u}_2))]e^{\frac{4s_0\Phi}{3}}\|_{L^2(L^2)}\lesssim Rd((\Tilde{a}_1,\Tilde{u}_1),(\Tilde{a}_2,\Tilde{u}_2)).
\end{align*}
We deduce that
$$
\|(f_{u}^{2}(\Tilde{a}_1,\Tilde{u}_1)-f_{u}^{2}(\Tilde{a}_2,\Tilde{u}_2))e^{\frac{4s_0\Phi}{3}}\|_{L^{2}(L^2)}\lesssim Rd((\Tilde{a}_1,\Tilde{u}_1),(\Tilde{a}_2,\Tilde{u}_2)).
$$
\textit{Estimate on $f^{3}_{u}$.} By using the bilinearity of $f_{u}^{3}$, as for $f_{a}$, we deduce similarly as for the estimate of $f^{3}_{u}$ in the proof of Lemma \ref{Estimation des terms nonlinear lemme} that
$$
\|(f_{u}^{3}(\Tilde{a}_1,\Tilde{u}_1)-f_{u}^{3}(\Tilde{a}_2,\Tilde{u}_2))e^{\frac{4s_0\Phi}{3}}\|_{L^{2}(L^2)}\lesssim Rd((\Tilde{a}_1,\Tilde{u}_1),(\Tilde{a}_2,\Tilde{u}_2)).
$$
\textit{Estimate on $f^{4}_{u}$.} We have 
\begin{align*}
f^{4}_{u}(\Tilde{a}_1,\Tilde{a}_1)-f^{4}_{u}(\Tilde{a}_2,\Tilde{a}_2)=(\underline{P'}(\Tilde{a}_1)-\underline{P'}(\Tilde{a}_2))\nabla\Tilde{a}_1+\underline{P'}(\Tilde{a}_2)\nabla(\Tilde{a}_1-\Tilde{a}_2).
\end{align*}
Then, by applying Lemma \ref{I 31/08/2023}, we get
\begin{align*}
    \|(\underline{P'}(\Tilde{a}_1)-\underline{P'}(\Tilde{a}_2))\nabla\Tilde{a}_1]e^{\frac{4s_0\Phi}{3}}\|_{L^2(L^2)}\lesssim\|(\underline{P'}(\Tilde{a}_1)-\underline{P'}(\Tilde{a}_2))e^{\frac{2s_0\Phi}{3}}\|_{L^{\infty}(L^{\infty})}\|\Tilde{a}_1e^{\frac{2s_0\Phi}{3}}\|_{L^2(H^2)}\lesssim Rd((\Tilde{a}_1,\Tilde{u}_1),(\Tilde{a}_2,\Tilde{u}_2))
\end{align*}
and 
\begin{align*}
    \|\underline{P'}(\Tilde{a}_2)\nabla(\Tilde{a}_1-\Tilde{a}_2)e^{\frac{4s_0\Phi}{3}}\|_{L^2(L^2)}&\lesssim\|\underline{P'}(\Tilde{a}_2)e^{\frac{2s_0\Phi}{3}}\|_{L^{\infty}(L^{\infty})}\|(\Tilde{a}_1-\Tilde{a}_2)e^{\frac{2s_0\Phi}{3}}\|_{L^2(H^1)}\\
    &\lesssim Rd((\Tilde{a}_1,\Tilde{u}_1),(\Tilde{a}_2,\Tilde{u}_2)).
\end{align*}
We deduce that 
$$
\|(f_{u}^{4}(\Tilde{a}_1,\Tilde{a}_1)-f_{u}^{4}(\Tilde{a}_2,\Tilde{a}_2))e^{\frac{4s_0\Phi}{3}}\|_{L^2(L^2)}\lesssim Rd((\Tilde{a}_1,\Tilde{u}_1),(\Tilde{a}_2,\Tilde{u}_2)).
$$
\textit{Estimate on $f^{5}_{u}$.} We have
\begin{align*}
    f_{u}^{5}(\Tilde{a}_1,\Tilde{a}_1)-f_{u}^{5}(\Tilde{a}_2,\Tilde{a}_2)=&(\Tilde{a}_1+1)\nabla\Big((\underline{\kappa}(\Tilde{a}_1)-\underline{\kappa}(\Tilde{a}_2)) \Laplace \Tilde{a}_1+\nabla(\underline{\kappa}(\Tilde{a}_1)-\underline{\kappa}(\Tilde{a}_2))\cdot\nabla \Tilde{a}_1\Big)\\
&+(\Tilde{a}_1+1)\nabla\Big(\underline{\kappa}(\Tilde{a}_2) \Laplace (\Tilde{a}_1-\Tilde{a}_2)+\nabla\underline{\kappa}(\Tilde{a}_2)\cdot\nabla (\Tilde{a}_1-\Tilde{a}_2)\Big)\\
&+(\Tilde{a}_1-\Tilde{a}_2)\nabla\Big(\underline{\kappa}(\Tilde{a}_2) \Laplace \Tilde{a}_2+\nabla\underline{\kappa}(\Tilde{a}_2)\cdot\nabla \Tilde{a}_2\Big).
\end{align*}
To estimate the first term of the right-hand side, we apply Lemma \ref{I 31/08/2023} to $F=\underline{\kappa}$ according to \eqref{II 30/08/2023}. We get
\begin{align*}
    \|(\Tilde{a}_1+1)\nabla\Big((\underline{\kappa}(\Tilde{a}_1)-\underline{\kappa}(\Tilde{a}_2)) &\Laplace \Tilde{a}_1+\nabla(\underline{\kappa}(\Tilde{a}_1)-\underline{\kappa}(\Tilde{a}_2))\cdot\nabla \Tilde{a}_1\Big)e^{\frac{4s_0\Phi}{3}}\|_{L^2(L^2)}\\
    &\lesssim \|\Tilde{a}_1+1\|_{L^{\infty}(L^{\infty})}\|(\underline{\kappa}(\Tilde{a}_1)-\underline{\kappa}(\Tilde{a}_2))e^{\frac{2s_0\Phi}{3}}\|_{L^{\infty}(H^2)}\|\Laplace\Tilde{a}_1e^{\frac{2s_0\Phi}{3}}\|_{L^2(H^1)}\\
    &\ \ +\|\Tilde{a}_1+1\|_{L^{\infty}(L^{\infty})}\|\nabla(\underline{\kappa}(\Tilde{a}_1)-\underline{\kappa}(\Tilde{a}_2))e^{\frac{2s_0\Phi}{3}}\|_{L^{\infty}(H^1)}\|\nabla\Tilde{a}_1e^{\frac{2s_0\Phi}{3}}\|_{L^2(H^2)}\\
    &\lesssim\|(\underline{\kappa}(\Tilde{a}_1)-\underline{\kappa}(\Tilde{a}_2))e^{\frac{2s_0\Phi}{3}}\|_{L^{\infty}(H^2)}\|\Tilde{a}_1e^{\frac{2s_0\Phi}{3}}\|_{L^2(H^3)}\\
    &\lesssim R\|(\Tilde{a}_1-\Tilde{a}_2)e^{\frac{2s_0\Phi}{3}}\|_{L^{\infty}(H^2)}\\  
    &\lesssim Rd((\Tilde{a}_1,\Tilde{u}_1),(\Tilde{a}_2,\Tilde{u}_2)).
    \end{align*}
Similarly, we also have
$$
\|(\Tilde{a}_1+1)\nabla\Big(\underline{\kappa}(\Tilde{a}_2)\Laplace (\Tilde{a}_1-\Tilde{a}_2)+\nabla\underline{\kappa}(\Tilde{a}_2)\cdot\nabla (\Tilde{a}_1-\Tilde{a}_2)\Big)e^{\frac{4s_0\Phi}{3}}\|_{L^2(L^2)}\lesssim Rd((\Tilde{a}_1,\Tilde{u}_1),(\Tilde{a}_2,\Tilde{u}_2))
$$
and
$$
\|(\Tilde{a}_1-\Tilde{a}_2)\nabla\Big(\underline{\kappa}(\Tilde{a}_2) \Laplace \Tilde{a}_2+\nabla\underline{\kappa}(\Tilde{a}_2)\cdot\nabla \Tilde{a}_2\Big)e^{\frac{4s_0\Phi}{3}}\|_{L^2(L^2)}\lesssim Rd((\Tilde{a}_1,\Tilde{u}_1),(\Tilde{a}_2,\Tilde{u}_2)).
$$
Then we obtain that
$$
\|(f_{u}^{5}(\Tilde{a}_1,\Tilde{a}_1)-f_{u}^{5}(\Tilde{a}_2,\Tilde{a}_2))e^{\frac{4s_0\Phi}{3}}\|_{L^2(L^2)}\lesssim (R+R^2)d((\Tilde{a}_1,\Tilde{u}_1),(\Tilde{a}_2,\Tilde{u}_2)).
$$
This concludes the proof of Lemma \ref{Estimation des terms nonlinear lemme contraction}.
\end{proof}
According to Lemma \ref{Estimation des terms nonlinear lemme contraction}, we choose 
$$
R:=\frac{1}{2}\min\left(\frac{1}{ C},R_{0}\right)\ \ \ \text{and}\ \ \ \ \delta:=\delta_R,
$$
so that if $(a_0, u_0)$ satisfies
$$
\|(a_0,u_0)\|_{H^2\times H^1}\leq \delta, 
$$
the map $\mathcal{F}$ maps $B_R$ into itself by Step 2 and is contractive on $B_R$ (with the topology induced by the distance $d$ for which $B_R$ is complete) by Lemma \ref{Estimation des terms nonlinear lemme contraction}.
\\
\underline{\textit{Step 4: Conclusion}.} We conclude from the Banach fixed-point theorem that $\mathcal{F}$ admit a fixed-point $(a,u)$ in $B_R$. Moreover, it follows from Theorem \ref{theorem 4.1 EGG} and Lemma \ref{Estimation des terms nonlinear lemme} that $(a,u)$ have the wanted regularity. Finally, according to \eqref{I 30/08/2023} and \eqref{II 30/08/2023}, it follows from Hypothesis {\bf{(H2)}} that $\rho(t,x)>\rho_\star-\eta>0$ for any $(t,x)\in(0,T)\times\T_L$.
\end{proof}
\appendix
\section{Proof of the Carleman estimate}\label{App}
In this appendix we are interested in establishing the following Carleman estimate
\begin{lem}
\label{lemme carleman appendice}
Let $T>0$. Let us consider a complex number $\zeta$ such that $\Re(\zeta)>0$. There exist three positive constants $C$, $s_0\geq1$ and $\lambda_0\geq 1$, large enough, such that for all smooth function $w$ on $[0,T]\times\T_L$ and for all $s\geq s_0$ and $\lambda\geq\lambda_0$, we have
\begin{align*}
s^{\frac{3}{2}}\lambda^{2}\|\xi^{\frac{3}{2}}we^{-s\vphi}\|_{L^2(L^2)}+s^{\frac 12}\lambda\|\xi^{\frac{1}{2}}\nabla & we^{-s\vphi}\|_{L^2(L^2)}  +s\lambda^{\frac{3}{2}}e^{7\lambda}\|w(0)e^{-s\vphi(0)}\|_{L^2}\\
& \leq C\left(\|(-\partial_t-\zeta\Laplace)we^{-s\vphi}\|_{L^2(L^2)}+s^{\frac 32}\lambda^{2}\|\xi^{\frac 32}\chi_0 we^{-s\vphi}\|_{L^2(L^2)}\right),
\end{align*}
where we have set
$$
\xi(t,x):=\theta(t)e^{\lambda\psi(x)}.
$$
\end{lem}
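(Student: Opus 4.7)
My plan is to adapt the classical Fursikov--Imanuvilov Carleman strategy followed in \cite{Localcontrollabilitytotrajectoriesfornon-homogeneousincompressibleNavier-Stokesequations} for the real-coefficient heat equation, carefully tracking the extra contributions coming from $\Im\zeta$. First I set $z := w e^{-s\vphi}$; a direct computation gives
\begin{equation*}
e^{-s\vphi}(-\partial_t-\zeta\Laplace)(e^{s\vphi}z) = P_1 z + P_2 z + Rz,
\end{equation*}
with the splitting
\begin{equation*}
P_1 z := -\zeta\Laplace z - s^2\zeta |\nabla\vphi|^2 z,
\qquad
P_2 z := -\partial_t z - 2s\zeta\,\nabla\vphi\cdot\nabla z - 2s\zeta(\Laplace\vphi) z,
\end{equation*}
and $Rz$ collecting the remaining zeroth-order terms $-s(\partial_t\vphi)z + s\zeta(\Laplace\vphi)z$. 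Expanding $\|P_1 z + P_2 z\|_{L^2(L^2)}^2$ and using the triangle inequality applied to $P_1 z + P_2 z + Rz = e^{-s\vphi}(-\partial_t-\zeta\Laplace)w$ reduces the proof to showing that $2\Re\langle P_1 z, P_2 z\rangle_{L^2(L^2)}$ produces enough positive mass to dominate the left-hand side of the desired inequality, modulo terms controlled by the observation on $\omega_0$.

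The cross product splits into four scalar terms that I would treat via integration by parts in $x$ and $t$ on $[0,T]\times\T_L$. Using the explicit shape $\vphi = \theta(\lambda e^{12\lambda}-e^{\lambda\psi})$, the weight $\xi = \theta e^{\lambda\psi}$, the lower bound $|\nabla\psi|\geq c>0$ on $\T_L\setminus\omega_0$, the bounds $\psi\in[6,7]$, and the calibration $m = s\lambda^2 e^{2\lambda}$, the dominant contributions that emerge are
\begin{equation*}
s^3\lambda^4\!\!\int_0^T\!\!\!\int_{\T_L}\!\!\xi^3 |z|^2\,dx\,dt \ + \ s\lambda^2\!\!\int_0^T\!\!\!\int_{\T_L}\!\!\xi |\nabla z|^2\,dx\,dt \ + \ s\,\Re\zeta\,\lambda^{3}e^{14\lambda}\|\nabla z(0)\|_{L^2}^2,
\end{equation*}
up to remainders absorbed for $s$ and $\lambda$ large enough (the choice of $m$ is dictated by the need to dominate the $|\partial_t\vphi|$ and $|\partial_t^2\vphi|$ terms arising from $Rz$ and from the time integrations by parts). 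The boundary term at $t=0$ comes from the time IBP applied to $\Re\int\zeta\Laplace z\,\partial_t\bar z$, and a Poincar\'e-type comparison between $\|\nabla z(0)\|$ and $\|z(0)\|$ produces the factor $s\lambda^{3/2}e^{7\lambda}\|w(0)e^{-s\vphi(0)}\|_{L^2}$ in the statement. Finally, a standard cut-off IBP using $\chi_0\equiv 1$ on $\omega_0$ converts the $|z|^2$ and $|\nabla z|^2$ contributions on $\omega_0$ into the pure observation term $s^{3/2}\lambda^2\|\xi^{3/2}\chi_0 we^{-s\vphi}\|_{L^2(L^2)}$.

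The main obstacle, compared to the real-coefficient case, lies in the bookkeeping of the imaginary part of $\zeta$ in the integration-by-parts identities. When $\zeta$ is real, the cross-term $\Re\int\zeta\Laplace z\,\partial_t\bar z$ integrates exactly to $-\tfrac{\zeta}{2}\partial_t\int|\nabla z|^2$; when $\zeta\in\C$ it produces an additional term proportional to $\Im\zeta\cdot\Im(\nabla z\cdot\partial_t\nabla\bar z)$. Such terms cannot simply be dropped: they must be integrated by parts once more in time and then absorbed by Young's inequality into $\|P_1z\|^2 + \|P_2z\|^2$. The absorption works precisely because the coefficient appearing in the leading cross-term $\Re\int 2s\zeta\bar\zeta\,\Laplace z\,\nabla\vphi\cdot\nabla\bar z$ is $|\zeta|^2 = (\Re\zeta)^2+(\Im\zeta)^2$, which is bounded below by a positive constant, and because $\Re\zeta>0$ ensures the right sign for the $\|\nabla z(0)\|^2$ boundary contribution. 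The same mechanism disposes of the $\Im\zeta$ corrections in the three remaining cross-terms without altering the sign of the dominant quantities. Once these complex perturbations have been controlled, the rest of the argument---absorption of $Rz$, passage back from $z$ to $w$, and localization of the observation---follows \cite{Localcontrollabilitytotrajectoriesfornon-homogeneousincompressibleNavier-Stokesequations} line by line.
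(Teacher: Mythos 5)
Your overall strategy — conjugate by the weight, split the conjugated operator into two parts, and extract positivity from the cross term — is the right one, and the calibration $m=s\lambda^2e^{2\lambda}$ as well as the source of the leading $|\zeta|^2$ factor are correct. However, the precise decomposition you choose is not the one that works, and the place you yourself flag as the main obstacle is exactly where the argument breaks.

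You set $P_1 z := -\zeta\bigl(\Laplace z + s^2|\nabla\vphi|^2 z\bigr)$, which is \emph{not} self-adjoint when $\Im\zeta\ne 0$: $(P_1)^*= -\bar\zeta(\Laplace+s^2|\nabla\vphi|^2)$. In the cross term $\Re\langle P_1z,P_2z\rangle$ you then face $\Re\bigl(\zeta\int\Laplace z\,\overline{\partial_t z}\bigr)=-\Re\bigl(\zeta\int\nabla z\cdot\overline{\partial_t\nabla z}\bigr)=-\alpha\,\Re J+\beta\,\Im J$ with $J:=\int_0^T\!\!\int\nabla z\cdot\overline{\partial_t\nabla z}$. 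Integrating in time, using $z(T)=0$, gives $J=-\|\nabla z(0)\|_{L^2}^2-\bar J$, i.e.\ it pins down $\Re J=-\tfrac12\|\nabla z(0)\|^2$ but says nothing about $\Im J$. Your claim that a further integration by parts in time plus Young's inequality absorbs the $\Im\zeta$ piece does not hold: the integration by parts only produces the identity above (it does not annihilate $\Im J$, and the cross boundary term $\Im[\,|\nabla z|^2\,]_0^T$ vanishes identically, so nothing new is gained), while a Young estimate would require controlling $\|\partial_t\nabla z\|_{L^2(L^2)}$, which is not available from $\|P_1z\|^2+\|P_2z\|^2$ at any stage of the Carleman scheme. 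So $\beta\,\Im J$ is a bulk term with no sign, involving a derivative of $z$ you never control; nothing in your proposal dominates it.

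The paper avoids this entirely by taking the genuine symmetric/antisymmetric split of $P_\vphi$ \emph{with respect to the complex structure}: writing $\zeta=\alpha+i\beta$, it sets $P_1\mathbf{w}=-\alpha(\Laplace\mathbf{w}+s^2|\nabla\vphi|^2\mathbf{w})-i\beta(2s\nabla\vphi\cdot\nabla\mathbf{w}+s\Laplace\vphi\mathbf{w})-s\partial_t\vphi\,\mathbf{w}$ (self-adjoint) and $P_2\mathbf{w}=-\partial_t\mathbf{w}-i\beta(\Laplace\mathbf{w}+s^2|\nabla\vphi|^2\mathbf{w})-\alpha(2s\nabla\vphi\cdot\nabla\mathbf{w}+s\Laplace\vphi\mathbf{w})$ (skew-adjoint). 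The crucial point is that in this split the second-order term $i\beta\Laplace\mathbf{w}$ sits in $P_2$ together with $\partial_t\mathbf{w}$, so they never meet in the cross product $\Re\langle P_1\mathbf{w},P_2\mathbf{w}\rangle$. The only $\partial_t\mathbf{w}$ cross terms are $\alpha\,\Re\int\Laplace\mathbf{w}\,\overline{\partial_t\mathbf{w}}$ (real coefficient, clean sign, gives $\tfrac{\alpha}{2}\|\nabla\mathbf{w}(0)\|^2$) and $\Re\bigl(i\beta\int(2s\nabla\vphi\cdot\nabla\mathbf{w}+s\Laplace\vphi\mathbf{w})\overline{\partial_t\mathbf{w}}\bigr)$, which is first order and whose boundary term $-s\Re\bigl(i\beta\int\nabla\vphi(0)\cdot\nabla\mathbf{w}(0)\overline{\mathbf{w}}(0)\bigr)$ is easily absorbed for $\lambda$ large. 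That is the correct bookkeeping device, and it must replace your ad hoc split for the complex-coefficient case. (A secondary remark: the coefficient $s\lambda^{3/2}e^{7\lambda}$ in front of $\|w(0)e^{-s\vphi(0)}\|_{L^2}$ does not come from a Poincar\'e comparison with $\|\nabla z(0)\|$ but directly from the cross term between $-s\partial_t\vphi\,\mathbf{w}$ in $P_1$ and $-\partial_t\mathbf{w}$ in $P_2$, using $-\partial_t\vphi(0)\gtrsim s\lambda^3e^{14\lambda}$.)
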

\begin{proof}
Let us set $\zeta=\alpha+i\beta$, where $\alpha$ and $\beta$ are real numbers with $\alpha>0$. Let $w$ be a smooth complex valued function on $[0,T]\times\T_L$ and set 
$$
f:=-\partial_tw-\zeta\Laplace{w}.
$$
We shall deal with the function
$$
\mathbf{w}:=e^{-s\vphi}w.
$$
According to the definition of $\theta$, $\mathbf{w}$ satisfies
$$
\mathbf{w}(T,x)=0\ \ \ \text{and}\ \ \ \nabla \mathbf{w}(T,x)=0,\ \ \ x\in\T_L.
$$
Let us define the conjugate of $-\zeta\partial_t-\Laplace$ by
$$
P_{\vphi}:=e^{-s\vphi}(-\partial_t-\zeta\Laplace)e^{s\vphi}.
$$
Then 
$$
P_{\vphi}\mathbf{w}=-\partial_t\mathbf{w}-s\partial_t\vphi \mathbf{w}-\zeta\Laplace \mathbf{w}-2s\zeta\nabla\vphi\cdot\nabla \mathbf{w}-s^2\zeta |\nabla\vphi|^2\mathbf{w}-s\zeta\Laplace\vphi \mathbf{w}
$$
and 
$$
e^{-s\vphi}f=P_{\vphi}\mathbf{w}.
$$
Inspired by the strategy to prove Carleman estimate (see \cite{NullControllabilityoftheComplexGinzburgLandauEquation}), we now define quantities $P_1\mathbf{w}$ and $P_2\mathbf{w}$ from the symmetric and  antisymmetric part of $P_{\vphi}$ by setting

$$
P_1\mathbf{w}:=\frac{1}{2}(P_{\vphi}+P^{*}_{\vphi})\mathbf{w}=-\alpha(\Laplace \mathbf{w}+s^2|\nabla\vphi|^2\mathbf{w})-i\beta(2s\nabla\vphi\cdot\nabla\mathbf{w}+s\Laplace\vphi\mathbf{w})-s\partial_t\vphi\mathbf{w}
$$
and 
$$
P_2\mathbf{w}:=\frac{1}{2}(P_{\vphi}-P^{*}_{\vphi})\mathbf{w}=-\partial_t \mathbf{w}-i\beta(\Laplace \mathbf{w}+s^2|\nabla\vphi|^2\mathbf{w})-\alpha(2s\nabla\vphi\cdot\nabla\mathbf{w}+s\Laplace\vphi\mathbf{w}),
$$
so that
$$
P_{\vphi}\mathbf{w}=P_1\mathbf{w}+P_2\mathbf{w}.
$$
Since $P_1\mathbf{w}+P_2\mathbf{w}=e^{-s\vphi}f$, we get
\begin{align}
\iint_{[0,T]\times\T_L}|P_1\mathbf{w}|^2+&\iint_{[0,T]\times\T_L}|P_2\mathbf{w}|^2+2\Re\left(\iint_{[0,T]\times\T_L}P_1\mathbf{w}\overline{P_2 \mathbf{w}}\right)\leq \iint_{[0,T]\times\T_L}e^{-2s\vphi}|f|^2.\label{A.8 BEG}
\end{align}
The main part of the proof $L^2$-Carleman estimate consists to estimate from below the real part of the scalar product of $P_1\mathbf{w}$ with $P_2\mathbf{w}$. We begin by setting
\begin{equation}
\label{cross product}
\Re\left(\iint_{[0,T]\times\T_L}P_1\mathbf{w}\overline{P_2 \mathbf{w}}\right)=\sum_{1\leq k,l\leq 3}\Re(I_{k,l}),
\end{equation}
where $I_{k,l}$ is the scalar product of the $k$-th term of $P_1\mathbf{w}$ with the $l$-th term of $P_2\mathbf{w}$.
Note that by $L$-periodicity, all the boundary terms generated from integration by parts with respect to the space variable vanish. \\
\textit{\underline{Step 1: Computation of the scalar product.}} Let us begin by remarking that
$$
\Re(I_{2,3})=\Re(I_{1,2})=0.
$$
\textit{Computation of $\Re(I_{1,3})+\Re(I_{2,2})$.} We get
\begin{align}
    \Re(I_{1,3})+\Re(I_{2,2})&=(\alpha^2+\beta^2)\Re\left(\iint_{[0,T]\times\T_L}(\Laplace\mathbf{w}+s^2|\nabla\vphi|^2\mathbf{w})(2s\nabla\vphi\cdot\nabla\overline{\mathbf{w}}+s\Laplace\vphi\overline{\mathbf{w}})\right)\nonumber\\
    &=2|\zeta|^2s\Re\left(\iint_{[0,T]\times\T_L}\Laplace\mathbf{w}\nabla\vphi\cdot\nabla\overline{\mathbf{w}}\right)-|\zeta|^2s\Re\left(\iint_{[0,T]\times\T_L}\nabla\Laplace\vphi\cdot\nabla\mathbf{w}\overline{\mathbf{w}}\right)\nonumber\\
    &\ \ \ -|\zeta|^2 s\iint_{[0,T]\times\T_L}\Laplace\vphi|\nabla\mathbf{w}|^2\nonumber\\
    &\ \ \ -|\zeta|^2s^3\iint_{[0,T]\times\T_L}\divergence(|\nabla\vphi|^2\nabla\vphi)|\mathbf{w}|^2+|\zeta|^2s^3\iint_{[0,T]\times\T_L}|\nabla\vphi|^2\Laplace\vphi|\mathbf{w}|^2\nonumber\\
    &=2|\zeta|^2s\Re\left(\iint_{[0,T]\times\T_L}\Laplace\mathbf{w}\nabla\vphi\cdot\nabla\overline{\mathbf{w}}\right)+\frac{|\zeta|^2s}{2}\iint_{[0,T]\times\T_L}\Laplace^2\vphi|\mathbf{w}|^2\nonumber\\
    &\ \ \ -|\zeta|^2s\iint_{[0,T]\times\T_L}\Laplace\vphi|\nabla\mathbf{w}|^2\nonumber\\
    &\ \ \ -|\zeta|^2s^3\iint_{[0,T]\times\T_L}\divergence(|\nabla\vphi|^2\nabla\vphi)|\mathbf{w}|^2+|\zeta|^2s^3\iint_{[0,T]\times\T_L}|\nabla\vphi|^2\Laplace\vphi|\mathbf{w}|^2\nonumber\\
    &=-2|\zeta|^2s\Re\left(\iint_{[0,T]\times\T_L}D^2\vphi(\nabla\mathbf{w},\nabla\overline{\mathbf{w}})\right)+\frac{|\zeta|^2}{2}s\iint_{[0,T]\times\T_L}\Laplace^2\vphi|\mathbf{w}|^2\nonumber\\
    &\ \ \ -|\zeta|^2s^3\iint_{[0,T]\times\T_L}\divergence(|\nabla\vphi|^2\nabla\vphi)|\mathbf{w}|^2+|\zeta|^2s^3\iint_{[0,T]\times\T_L}|\nabla\vphi|^2\Laplace\vphi|\mathbf{w}|^2.\label{III 03/05/2023}
\end{align}
\textit{Computation of $\Re(I_{1,1})$.} By integrating by parts and using that $\mathbf{w}(T)=\nabla\mathbf{w}(T)=0$, we deduce
\begin{align}
    \Re(I_{1,1})&=\alpha\Re\left(\iint_{[0,T]\times\T_L}(\Laplace\mathbf{w}+s^2|\nabla\mathbf{\vphi}|^2\mathbf{w})\partial_t\overline{\mathbf{w}}\right)\nonumber\\
    &=\alpha\Re\left(\iint_{[0,T]\times\T_L}\Laplace\mathbf{w}\partial_t\overline{\mathbf{w}}\right)-\frac{\alpha s^2}{2}\int_{\T_L}|\nabla\vphi(0)|^2|\mathbf{w}(0)|^2-\frac{\alpha s^2}{2}\iint_{[0,T]\times\T_L}\partial_t|\nabla\vphi|^2|\mathbf{w}|^2\nonumber\\
    &=\frac{\alpha}{2}\int_{\T_L}|\nabla\mathbf{w}(0)|^2-\frac{\alpha s^2}{2}\int_{\T_L}|\nabla\vphi(0)|^2|\mathbf{w}(0)|^2-\frac{\alpha s^2}{2}\iint_{[0,T]\times\T_L}\partial_t|\nabla\vphi|^2|\mathbf{w}|^2. \label{IV 03/05/2023}
\end{align}
\textit{Computation of $\Re(I_{2,1})$.} Keeping in mind that $\mathbf{w}(T)=\nabla\mathbf{w}(T)=0$ and using that for any complex number $z$ we have $\Im(z)+\Im(\overline{z})=0$, by integrating by parts, we have
\begin{align}
    \Re(I_{2,1})&=\Re\left(i\beta\iint_{[0,T]\times\T_L}(2s\nabla\vphi\cdot\nabla\mathbf{w}+s\Laplace\vphi\mathbf{w})\partial_t\overline{\mathbf{w}}\right)\nonumber\\
    &=2s\Re\left(i\beta\iint_{[0,T]\times\T_L}\nabla\vphi\cdot\nabla\mathbf{w}\partial_t\overline{\mathbf{w}}\right)-s\Re\left(i\beta\iint_{[0,T]\times\T_L}\Laplace\vphi\mathbf{w}\partial_t\overline{\mathbf{w}}\right)\nonumber\\
    &=s\Re\left(i\beta\iint_{[0,T]\times\T_L}\nabla\vphi\cdot\nabla\mathbf{w}\partial_t\overline{\mathbf{w}}\right)-s\Re\left(i\beta\iint_{[0,T]\times\T_L}\nabla\vphi\cdot\mathbf{w}\partial_t\nabla\overline{\mathbf{w}}\right)\nonumber\\
    &=-s\Re\left(i\beta\int_{\T_L}\nabla\vphi(0)\cdot\nabla \mathbf{w}(0)\overline{\mathbf{w}}(0)\right)-s\Re\left(i\beta\iint_{[0,T]\times\T_L}\partial_t\nabla\vphi\cdot\nabla\mathbf{w}\overline{\mathbf{w}}\right)\nonumber\\
    &\ \ \ -s\Re\left(i\beta\iint_{[0,T]\times\T_L}\nabla\vphi\cdot\partial_t\nabla\mathbf{w}\overline{\mathbf{w}}\right)-s\Re \left(i\beta\iint_{[0,T]\times\T_L}\nabla\vphi\cdot\partial_t\nabla\overline{\mathbf{w}}\mathbf{w}\right)\nonumber\\
    &=-s\Re\left(i\beta\int_{\T_L}\nabla\vphi(0)\cdot\nabla\mathbf{w}(0)\overline{\mathbf{w}}(0)\right)-s\Re\left(i\beta\iint_{[0,T]\times\T_L}\partial_t\nabla\vphi\cdot\nabla\mathbf{w}\overline{\mathbf{w}}\right).\label{I 05/05/2023}
\end{align}
\textit{Computation of $\Re(I_{3,1})$.} Since $\mathbf{w}(T)=0$, we obtain
\begin{equation}
\label{VI 05/05/2023}
\Re(I_{3,1})=s\Re\left(\iint_{[0,T]\times\T_L}\partial_t\vphi\mathbf{w}\partial_t\overline{\mathbf{w}}\right)=-\frac{s}{2}\int_{\T_L}\partial_t\vphi(0)|\mathbf{w}(0)|^2-\frac{s}{2}\iint_{[0,T]\times\T_L}\partial_{t}^{2}\vphi|\mathbf{w}|^2.
\end{equation}
\textit{Computation of $\Re(I_{3,2})$.} We have
\begin{align}
    \Re(I_{3,2})&=-\Re\left(i\beta\iint_{[0,T]\times\T_L}s\partial_t\vphi\mathbf{w}(\Laplace\overline{\mathbf{w}}+s^2|\nabla\vphi|^2\overline{\mathbf{w}}\right)\nonumber\\
    &=-s\Re\left(i\beta\iint_{[0,T]\times\T_L}\partial_t\vphi\cdot\Laplace\overline{\mathbf{w}}\mathbf{w}\right)\nonumber\\
    &=s\Re\left(i\beta\iint_{[0,T]\times\T_L}\partial_t\nabla\vphi\cdot\nabla\overline{\mathbf{w}}\mathbf{w}\right).\label{I 16/05/2023}
\end{align}
\textit{Computation of $\Re(I_{3,3})$.} We have
\begin{align}
    \Re(I_{3,3})&=\alpha s^2\Re\left(\iint_{[0,T]\times\T_L}(2\nabla\vphi\cdot\nabla\overline{\mathbf{w}}+\Laplace\vphi\overline{\mathbf{w}})\partial_t\vphi\mathbf{w}\right)\nonumber\\
    &=-s^2\alpha\iint_{[0,T]\times\T_L}\divergence(\nabla\vphi\partial_t\vphi)|\mathbf{w}|^2+s^2\alpha\iint_{[0,T]\times\T_L}\Laplace\vphi\partial_t\vphi|\mathbf{w}|^2\nonumber\\
    &=-\alpha s^2\iint_{[0,T]\times\T_L}\nabla\vphi\cdot\partial_t\nabla\vphi|\mathbf{w}|^2\nonumber\\
    &=-\frac{\alpha s^2}{2}\iint_{[0,T]\times\T_L}\partial_t|\nabla\vphi|^2|\mathbf{w}|^2.\label{I  11/05/2023}
\end{align}
From \eqref{I 05/05/2023} and \eqref{I 16/05/2023}, we deduce that
\begin{equation}
    \label{II 16/05/2023}
    \Re(I_{3,2})+\Re(I_{2,1})=-s\Re\left(i\beta\int_{\T_L}\nabla\vphi(0)\cdot\nabla\mathbf{w}(0)\overline{\mathbf{w}}(0)\right)+2s\Re\left(i\beta\iint_{[0,T]\times\T_L}\partial_t\nabla\vphi\cdot\nabla\overline{\mathbf{w}}\mathbf{w}\right).
\end{equation}
By combining \eqref{III 03/05/2023}, \eqref{IV 03/05/2023}, \eqref{VI 05/05/2023}, \eqref{I  11/05/2023} and \eqref{II 16/05/2023}, it follows that
\begin{align}
    &\Re\left(\langle P_1\mathbf{w},P_2\mathbf{w}\rangle\right)\nonumber\\
    &=\frac{\alpha}{2}\int_{\T_L}|\nabla\mathbf{w}(0)|^2-\frac{s}{2}\int_{\T_L}\partial_t\vphi(0)|\mathbf{w}(0)|^2-\frac{s^2\alpha}{2}\int_{\T_L}|\nabla\vphi(0)|^2|\mathbf{w}(0)|^2-s\Re\left(i\beta\int_{\T_L}\nabla\vphi(0)\cdot\nabla\mathbf{w}(0)\overline{\mathbf{w}}(0)\right)\label{s0}\\
    &\ \ \ +\frac{s|\zeta|^2}{2}\iint_{[0,T]\times\T_L}\Laplace^2\vphi|\mathbf{w}|^2+s^3|\zeta|^2\iint_{[0,T]\times\T_L}(|\nabla\vphi|^2\Laplace\vphi-\divergence(|\nabla\vphi|^2\nabla\vphi))|\mathbf{w}|^2\label{s11}\\
    &\ \ \ -s^2\alpha\iint_{[0,T]\times\T_L}\partial_t|\nabla\vphi|^2|\mathbf{w}|^2-\frac{s}{2}\iint_{[0,T]\times\T_L}\partial_{t}^{2}\vphi|\mathbf{w}|^2\label{s2}\\
    &\ \ \ +s\left(-2|\zeta|^2\Re\left(\iint_{[0,T]\times\T_L}D^{2}\vphi(\nabla\mathbf{w},\nabla\overline{\mathbf{w}})\right)+2\Re\left(i\beta\iint_{[0,T]\times\T_L}\partial_t\nabla\vphi\cdot\nabla\overline{\mathbf{w}}\mathbf{w}\right)\right).\label{s12}
\end{align}
\textit{\underline{Step 2: Lower bound of the scalar product.}} We now give a lower bound for the scalar product $\Re\langle P_1\mathbf{w},P_2\mathbf{w}\rangle$. Along the rest of the proof, we will take the parameters $s$ and $\lambda$ large enough in order to absorb lower order terms with respect to the power of these parameters. In the following, to simplify notations, we will denote by $C^{\star}$ a generic large positive constant which do not depends on $s$ and $\lambda$ and by $C_{\star}$ a generic small positive constant independent of $s$ and $\lambda$. The constants may change from line to line.\\
\textit{Lower bound of \eqref{s0}.} We have
$$
-\partial_t\vphi(0)=\frac{s\lambda^2e^{2\lambda}(\lambda e^{12\lambda}-e^{\lambda\psi})}{T_0}.
$$
Since $\psi\leq 7$, we deduce that
$$
-\partial_t\vphi(0)\geq C_{\star}s\lambda^3e^{14\lambda}.
$$
Thus we obtain that
$$
-\frac{s}{2}\int_{\T_L}\partial_t\vphi(0)|\mathbf{w}(0)|^2\geq C_{\star}s^2\lambda^3e^{14\lambda}\int_{\T_L}|\mathbf{w}(0)|^2.
$$
Besides, since $\nabla\vphi(0)=-2\lambda\nabla\psi e^{\lambda\psi}$ and $\psi\leq 7$, we deduce that
\begin{align*}
    -\frac{\alpha s^2}{2}\int_{\T_L}|\nabla\vphi(0)|^2|\mathbf{w}(0)|^2\geq -C^{\star}s^2\lambda^2e^{14\lambda}\int_{\T_L}|\mathbf{w}(0)|^2
\end{align*}
and
\begin{align}
    -s\Re\left(i\beta\int_{\T_L}\nabla\vphi(0)\cdot\nabla\mathbf{w}(0)\overline{\mathbf{w}}(0)\right)\geq -C^{\star}s^2\lambda^{\frac{5}{2}}e^{14\lambda}\int_{\T_L}|\mathbf{w}(0)|^2-\frac{C^{\star}}{\lambda^{\frac{1}{2}}}\int_{\T_L}|\nabla\mathbf{w}(0)|^2.
\end{align}
According to $\alpha>0$, we conclude that 
\begin{align}
    \frac{\alpha}{2}\int_{\T_L}&|\nabla\mathbf{w}(0)|^2-\frac{s}{2}\int_{\T_L}\partial_t\vphi(0)|\mathbf{w}(0)|^2-\frac{s^2\alpha}{2}\int_{\T_L}|\nabla\vphi(0)|^2|\mathbf{w}(0)|^2\nonumber\\
    &\geq C_{\star}\int_{\T_L}|\nabla\mathbf{w}(0)|^2+C_{\star}s^2\lambda^3 e^{14\lambda}\int_{\T_L}|\mathbf{w}(0)|^2-C^{\star}s^2\lambda^{\frac{5}{2}}e^{14\lambda}\int_{\T_L}|\mathbf{w}(0)|^2-\frac{C^{\star}}{\lambda^{\frac{1}{2}}}\int_{\T_L}|\nabla\mathbf{w}(0)|^2\nonumber\\
    &\geq C_{\star}\int_{\T_L}|\nabla\mathbf{w}(0)|^2+C_{\star}s^2\lambda^3 e^{14\lambda}\int_{\T_L}|\mathbf{w}(0)|^2.\label{I 25/06/2023}
\end{align}
\\
\textit{Lower bound of \eqref{s11}.} We first have 
\begin{align}
    \frac{s|\zeta|^2}{2}\iint_{[0,T]\times\T_L}\Laplace^2\vphi|\mathbf{w}|^2\geq -C^{\star}s\lambda^4\iint_{[0,T]\times\T_L}\xi^3|\mathbf{w}|^2.
\end{align}
Besides, we have
\begin{align*}
    -|\zeta|^2s^3\iint_{[0,T]\times\T_L}\divergence(|\nabla\vphi|^2\nabla\vphi)|\mathbf{w}|^2+&|\zeta|^2s^3\iint_{[0,T]\times\T_L}|\nabla\vphi|^2\Laplace\vphi|\mathbf{w}|^2\\
    &=-|\zeta|^2s^3\iint_{[0,T]\times\T_L}\nabla|\nabla\vphi|^2\cdot\nabla\vphi|\mathbf{w}|^2\\
    &\geq -C^{\star}s^3\lambda^3\iint_{[0,T]\times\T_L}\xi^2|\mathbf{w}|^2+|\zeta|^2s^3\lambda^4\iint_{[0,T]\times\T_L}|\nabla\psi|^4\xi^3|\mathbf{w}|^2.
\end{align*}
Moreover, since $\inf\{|\nabla\psi|\}>0$ on $\T_L\setminus\overline{\omega}$, and $|\zeta|>0$, we deduce that
$$
|\zeta|^2s^3\lambda^4\iint_{[0,T]\times\T_L}|\nabla\psi|^4\xi^3|\mathbf{w}|^2\geq C_{\star}s^3\lambda^4\iint_{[0,T]\times\T_L}\xi^3|\mathbf{w}|^2-C^{\star}s^3\lambda^4\iint_{[0,T]\times\omega}\xi^3|\mathbf{w}|^2.
$$
Thus, we have
\begin{align}
\frac{s|\zeta|^2}{2}\iint_{[0,T]\times\T_L}\Laplace^2\vphi|\mathbf{w}|^2& -|\zeta|^2s^3\iint_{[0,T]\times\T_L}\divergence(|\nabla\vphi|^2\nabla\vphi)|\mathbf{w}|^2+|\zeta|^2s^3\iint_{[0,T]\times\T_L}|\nabla\vphi|^2\Laplace\vphi|\mathbf{w}|^2\nonumber\\
 &\geq C_{\star}s^3\lambda^4\iint_{[0,T]\times\T_L}\xi^3|\mathbf{w}|^2-C^{\star}s^3\lambda^4\iint_{[0,T]\times\omega}\xi^3|\mathbf{w}|^2\label{L4}.
\end{align}
\textit{Lower bound of \eqref{s2}.} 
According to the definition of $\vphi$, we have
$$
\partial^{2}_t\vphi=\frac{\partial_{t}^{2}\theta}{\theta}\vphi.
$$
Furthermore, in views of the definition of $\theta$,  on $[0,T_0]\times\T_L$, we have
$$
0\leq \partial_{t}^{2}\theta\leq C^{\star}s^2\lambda^4e^{4\lambda}.
$$
Thus, since $\psi\geq 6$ and $\theta\geq 1$, we obtain
$$
-\partial_{t}^{2}\vphi\geq -C^{\star}s^2\lambda^5e^{16\lambda}\geq -C^{\star}s^2\lambda^3\xi^3,
$$
on $[0,T_0]\times\T_L$. On the other hand, on $[T_0,T)\times\T_L$, we have
$$
-\partial_{t}^{2}\vphi\geq -C^{\star}\lambda\xi^2.
$$
We deduce that
$$
-\frac{s}{2}\iint_{[0,T]\times\T_L}\partial_{t}^{2}\vphi|\mathbf{w}|^2\geq -C^{\star}s^{3}\lambda^3\iint_{[0,T]\times\T_L}\xi^3|\mathbf{w}|^2.
$$
Moreover, it follows from the definition of $\theta$ that 
$$
\frac{\partial_t\theta}{\theta}\leq 0\ \ \text{on}\ \ [0,T-2T_1]\ \ \text{and}\ \ \frac{|\partial_t\theta|}{\theta}\leq C^{\star}\xi\ \ \text{on}\ \ [T-2T_1,T).
$$
Then, we deduce that
$$
-\frac{\alpha s^2}{2}\iint_{[0,T]\times\T_L}\partial_t|\nabla\vphi|^2|\mathbf{w}|^2\geq -C^{\star}s^2\lambda^2\iint_{[0,T]\times\T_L}\xi^3|\mathbf{w}|^3.
$$
Then, we conclude that
\begin{equation}
\label{L3}
-\frac{s}{2}\iint_{[0,T]\times\T_L}\partial_{t}^{2}\vphi|\mathbf{w}|^2-\frac{\alpha s^2}{2}\iint_{[0,T]\times\T_L}\partial_t|\nabla\vphi|^2|\mathbf{w}|^2\geq 
-C^{\star}s^3\lambda^3\iint_{[0,T]\times\T_L}\xi^3|\mathbf{w}|^2.
\end{equation}
\textit{Lower bound of \eqref{s12}.} From the definition of $\vphi$, we deduce that
\begin{align*}
-2|\zeta|^2s\Re\left(\iint_{[0,T]\times\T_L}D^2\vphi(\nabla\mathbf{w},\nabla\overline{\mathbf{w}})\right)&=2|\zeta|^2s\lambda\Re\left(\iint_{[0,T]\times\T_L}\xi D^2\psi(\nabla\mathbf{w},\nabla\overline{\mathbf{w}})\right)\\
&\ \ \ 2|\zeta|^2\lambda^2s\iint_{[0,T]\times\T_L}\xi|\nabla\psi\cdot\nabla\mathbf{w}|^2\\
&\geq 2|\zeta|^2s\lambda\Re\left(\iint_{[0,T]\times\T_L}\xi D^2\psi(\nabla\mathbf{w},\nabla\overline{\mathbf{w}})\right)\\
&\geq -C^{\star}s\lambda\iint_{[0,T]\times\T_L}\xi|\nabla\mathbf{w}|^2.
\end{align*}
Furthermore, we have
$$
-s\Re\left(i\beta\iint_{[0,T]\times\T_L}\partial_t\nabla\vphi\cdot\nabla\overline{\mathbf{w}}\mathbf{w}\right)=s\lambda\Re\left(ib\iint_{[0,T]\times\T_L}\frac{\partial_t\theta}{\theta}\xi\nabla\psi\cdot\nabla\overline{\mathbf{w}}\mathbf{w}\right).
$$
On $[0,T-2T_1]$, we have
$$
s\lambda\xi\frac{|\partial_t\theta|}{\theta}\leq \frac{s^2\lambda^3\xi e^{2\lambda}}{T_0},
$$
thus
\begin{align*}
s\lambda\Re\left(i\beta\iint_{[0,T-2T_1]\times\T_L}\frac{\partial_t\theta}{\theta}\xi\nabla\psi\cdot\nabla\overline{\mathbf{w}}\mathbf{w}\right)&\geq -C^{\star}s^3\lambda^4e^{2\lambda}\iint_{[0,T]\times\T_L}\xi^2|\mathbf{w}|^2-C^{\star}s\lambda^2e^{2\lambda}\iint_{[0,T]\times\T_L}|\nabla\mathbf{w}|^2\\
&\geq -C^{\star}s^3\lambda^3\iint_{[0,T]\times\T_L}\xi^3|\mathbf{w}|^2 -C^{\star}s\lambda\iint_{[0,T]\times\T_L}\xi|\nabla\mathbf{w}|^2.
\end{align*}
Besides, on $[T-2T_1,T)$, we have
$$
s\lambda\xi\frac{|\partial_t\theta|}{\theta}\leq C^{\star}s\lambda\xi^2,
$$
and then
$$
s\lambda\Re\left(i\beta\iint_{[0,T]\times\T_L}\frac{|\partial_t\theta|}{\theta}\xi\nabla\psi\cdot\nabla\overline{\mathbf{w}}\mathbf{w}\right)\geq -C^{\star}s\lambda\iint_{[0,T]\times\T_L}\xi^3|\mathbf{w}|^2-C^{\star}s\lambda\iint_{[0,T]\times\T_L}\xi|\nabla\mathbf{w}|^2.
$$
We deduce that
$$
-s\Re\left(i\beta\iint_{[0,T]\times\T_L}\partial_t\nabla\vphi\cdot\nabla\overline{\mathbf{w}}\mathbf{w}\right)\geq-C^{\star}s^{3}\lambda^3\iint_{[0,T]\times\T_L}\xi^3|\mathbf{w}|^2-C^{\star}s\lambda\iint_{[0,T]\times\T_L}\xi|\nabla\mathbf{w}|^2.
$$
If we denote by $L_2$ the expression of Line \eqref{s12}, we deduce that
\begin{equation}
    \label{L2}
    L_2\geq -C^{\star}s^{3}\lambda^3\iint_{[0,T]\times\T_L}\xi^3|\mathbf{w}|^2-C^{\star}s\lambda\iint_{[0,T]\times\T_L}\xi|\nabla\mathbf{w}|^2.
\end{equation}
We deduce from \eqref{I 25/06/2023}, \eqref{L4}, \eqref{L2} and \eqref{L3}, that
\begin{align*}
    \Re\left(\langle P_1\mathbf{w},P_2\mathbf{w}\rangle\right)&\geq C_{\star}\int_{\T_L}|\nabla\mathbf{w}(0)|^2+C_{\star}s^2\lambda^3e^{14\lambda}\int_{\T_L}|\mathbf{w}(0)|^2\\
    &\ \ +C_{\star}s^3\lambda^4\iint_{[0,T]\times\T_L}\xi^3|\mathbf{w}|^2-C^{\star}s^3\lambda^4\iint_{[0,T]\times\omega}\xi^3|\mathbf{w}|^2-C^{\star}s\lambda\iint_{[0,T]\times\T_L}\xi|\nabla\mathbf{w}|^2.
\end{align*}
\textit{\underline{Step 3: Observation on $[0,T]\times\omega$.}} Using the previous estimate and \eqref{A.8 BEG}, we obtain that
\begin{align}
   \iint_{[0,T]\times\T_L} e^{-2s\vphi}|f|^2+&C^{\star}s\lambda\iint_{[0,T]\times\T_L}\xi|\nabla\mathbf{w}|^2+C^{\star}s^3\lambda^4\iint_{[0,T]\times\omega}\xi^3|\mathbf{w}|^2\nonumber\\
    &\geq C_{\star}s^3\lambda^4\iint_{[0,T]\times\T_L}\xi^3|\mathbf{w}|^2+C_{\star}s^2\lambda^3 e^{14\lambda}\int_{\T_L}|\mathbf{w}(0)|^2+C_{\star}\int_{\T_L}|\nabla\mathbf{w}(0)|^2\nonumber\\
    &\ \ +C_{\star}\iint_{[0,T]\times\T_L}|P_1\mathbf{w}|^2+C_{\star}\iint_{[0,T]\times\T_L}|P_2\mathbf{w}|^2.\label{I 28/04/2023}
\end{align}
Moreover, we have the following lemma.
\begin{lem}\label{II 28/04/2023}
For any $\lambda\geq 1$ and $s\geq 1$ large enough, we have 
\begin{equation}
\label{II 28/04/2023 eq}
s\lambda^2\iint_{[0,T]\times\T_L}\xi|\nabla\mathbf{w}|^2\leq C^{\star}s^3\lambda^4\iint_{[0,T]\times\T_L}\xi^3|\mathbf{w}|^2+C^{\star}\iint_{[0,T]\times\T_L}|P_1\mathbf{w}|^2.
\end{equation}
\end{lem}
\begin{proof}
We have
\begin{align*}
    s\lambda^2\iint_{[0,T]\times\T_L}\xi|\nabla\mathbf{w}|^2&=-s\lambda^2\Re\left(\iint_{[0,T]\times\T_L}\nabla\xi\cdot\nabla\mathbf{w}\overline{\mathbf{w}}\right)-s\lambda^2\Re\left(\iint_{[0,T]\times\T_L}\xi\Laplace\mathbf{w}\overline{\mathbf{w}}\right)\\
    &=\frac{s\lambda^2}{2}\iint_{[0,T]\T_L}\Laplace\xi|\mathbf{w}|^2-s\lambda^2\Re\left(\iint_{[0,T]\times\T_L}\xi\Laplace\mathbf{w}\overline{\mathbf{w}}\right).
\end{align*}
Then, from 
$$
-\Laplace\mathbf{w}=\frac{1}{\alpha}\left(P_1\mathbf{w}+\alpha s^2|\nabla\vphi|^2\mathbf{w}+i\beta(2s\nabla\vphi\cdot\nabla\mathbf{w}+s\lambda\Laplace\vphi\mathbf{w})+s\partial_t\vphi\mathbf{w}\right)\ \ \ \text{on}\ \ [0,T)\times\T_L,
$$
it follows that
\begin{align*}
    s\lambda^2\iint_{[0,T]\times\T_L}\xi|\mathbf{w}|^2&=\frac{s\lambda^2}{2}\iint_{[0,T]\times\T_L}\Laplace\xi|\mathbf{w}|^2+\frac{s\lambda^2}{\alpha}\Re\left(\iint_{[0,T]\times\T_L}P_1\mathbf{w}\xi\overline{\mathbf{w}}\right)\\
    &\ \ \ +s^3\lambda^2\iint_{[0,T]\times\T_L}\xi|\nabla\vphi|^2|\mathbf{w}|^2\\
    &\ \ \ +\frac{s\lambda^2}{\alpha}\Re\left(ib\iint_{[0,T]\times\T_L}(2s\nabla\vphi\cdot\nabla\mathbf{w}+s\lambda\Laplace\vphi\mathbf{w})\xi\overline{\mathbf{w}}\right)\\
    &\ \ \ +\frac{s^2\lambda^2}{\alpha}\iint_{[0,T]\times\T_L}\xi\partial_t\vphi|\mathbf{w}|^2\\
    &=\frac{s\lambda^2}{2}\iint_{[0,T]\times\T_L}\Laplace\xi|\mathbf{w}|^2+\frac{s\lambda^2}{\alpha}\Re\left(\iint_{[0,T]\times\T_L}P_1\mathbf{w}\xi\overline{\mathbf{w}}\right)\\
    &\ \ \ +s^3\lambda^2\iint_{[0,T]\times\T_L}\xi|\nabla\vphi|^2|\mathbf{w}|^2\\
    &\ \ \ +\frac{2s^2\lambda^2}{\alpha}\Re\left(ib\iint_{[0,T]\times\T_L}\nabla\vphi\cdot\nabla\mathbf{w}\xi\overline{\mathbf{w}}\right)\\
    &\ \ \ +\frac{s^2\lambda^2}{\alpha}\iint_{[0,T]\times\T_L}\xi\partial_t\vphi|\mathbf{w}|^2.
\end{align*}
Furthermore, since $\partial_t\vphi\leq 0$ on $[0,T-2T_1]\times\T_L$, we deduce that
\begin{equation}
\label{II 17/07/2023}
s^2\lambda^2\iint_{[0,T]\times\T_L}\xi\partial_t\vphi|\mathbf{w}|^2\leq s^2\lambda^2\iint_{[T-2T_1,T]\times\T_L}\xi\partial_t\vphi|\mathbf{w}|^2.
\end{equation}
Moreover, by using the definition of $\theta$ and \eqref{2.41 EGG}, we deduce that
$$
|\partial_t\vphi|\leq\frac{|\partial_t\theta|}{\theta}\vphi\leq C^{\star}\theta\vphi\leq C^{\star}\theta^2\lambda e^{12\lambda}\ \ \ \text{on}\ \ [T-2T_1,T)\times\T_L.
$$
Since $\psi\geq 6$, we obtain that
\begin{equation*}
\partial_t\vphi\leq C^{\star}\lambda\xi^2\ \ \ \text{on}\ \ [T-2T_1,T)\times\T_L.
\end{equation*}
Hence, from \eqref{II 17/07/2023}, it follows that
\begin{equation}
    \label{I 15/07/2023}
s^2\lambda^2\iint_{[0,T]\times\T_L}\xi\partial_t\vphi|\mathbf{w}|^2\leq C^{\star}s^2\lambda^3\iint_{[0,T]\times\T_L}\xi^3|\mathbf{w}|^2.
\end{equation}
On the other hand, by using the Young estimate, we get
\begin{equation}
    \label{II 15/07/2023}
    s\lambda^2\Re\left(\iint_{[0,T]\times\T_L}P_1\mathbf{w}\xi\overline{\mathbf{w}}\right)\leq C^{\star}\iint_{[0,T]\times\T_L}|P_1\mathbf{w}|^2+C^{\star}s^2\lambda^4\int_{[0,T]\times\T_L}\xi^2|\mathbf{w}|^2.
\end{equation}
Furthermore, it follows from the Young estimate that
\begin{align}
    \frac{2s^2\lambda^2}{\alpha}\Re&\left(i\beta\iint_{[0,T]\times\T_L}\nabla\vphi\cdot\nabla\mathbf{w}\xi\overline{\mathbf{w}}\right)\nonumber\\
    &\leq s^2\lambda^2|\beta|\left(\frac{1}{s(|\beta|+1)}\iint_{[0,T]\times\T_L}\xi|\nabla\mathbf{w}|^2+\frac{4(|\beta|+1)s}{\alpha^2}\iint_{[0,T]\times\T_L}|\nabla\vphi|^2\xi|\mathbf{w}|^2\right)\nonumber\\
    &\leq\frac{|\beta|}{|\beta|+1}s\lambda^2\iint_{[0,T]\times\T_L}\xi|\nabla\mathbf{w}|^2+C^{\star}s^3\lambda^4\iint_{[0,T]\times\T_L}\xi^3|\mathbf{w}|^2.\label{III 15/07/2023}
\end{align}
Then, by combining \eqref{I 15/07/2023}, \eqref{II 15/07/2023} and \eqref{III 15/07/2023}, we get
\begin{align*}
    s\lambda^2\iint_{[0,T]\times\T_L}\xi|\nabla\mathbf{w}|^2&\leq C^{\star}\iint_{[0,T]\times\T_L}|P_1\mathbf{w}|^2+C^{\star}s^3\lambda^4\iint_{[0,T]\times\T_L}\xi^3|\mathbf{w}|^2\\
    &\ \ \ +\frac{|\beta|}{|\beta|+1}s\lambda^2\iint_{[0,T]\times\T_L}\xi|\nabla\mathbf{w}|^2\\
    &\ \ \ +\frac{s\lambda^2}{2}\iint_{[0,T]\times\T_L}\Laplace\xi|\mathbf{w}|^2+s^3\lambda^2\iint_{[0,T]\times\T_L}\xi|\nabla\vphi|^2|\mathbf{w}|^2\\
    &\leq C^{\star}\iint_{[0,T]\times\T_L}|P_1\mathbf{w}|^2+C^{\star}s^3\lambda^4\iint_{[0,T]\times\T_L}\xi^3|\mathbf{w}|^2\\
    &\ \ \ +C^{\star}s\lambda^4\iint_{[0,T]\times\T_L}\xi^3|\mathbf{w}|^2+\frac{|\beta|}{|\beta|+1}s\lambda^2\iint_{[0,T]\times\T_L}\xi|\nabla\mathbf{w}|^2,
\end{align*}
that is 
\begin{align*}
    \left(1-\frac{|\beta|}{|\beta|+1}\right)s\lambda^2\iint_{[0,T]\times\T_L}\xi|\nabla\mathbf{w}|^2\leq &C^{\star}\iint_{[0,T]\times\T_L}|P_1\mathbf{w}|^2+C^{\star}s^3\lambda^4\iint_{[0,T]\times\T_L}\xi^3|\mathbf{w}|^2\\
    &\ \ \ +C^{\star}s\lambda^4\iint_{[0,T]\times\T_L}\xi^3|\mathbf{w}|^2.
\end{align*}
\end{proof}
We apply Lemma \ref{II 28/04/2023}. Then, by using \eqref{I 28/04/2023} to estimate the last term of the right-hand side of \eqref{II 28/04/2023 eq} and by absorbing the first term of the right-hand side, we deduce that
\begin{align}
 C_{\star}s\lambda^2\iint_{[0,T]\times\T_L}\xi|\nabla\mathbf{w}|^2 &\leq s\lambda^2\iint_{[0,T]\times\T_L}\xi|\nabla\mathbf{w}|^2\nonumber\\
 &\leq C^{\star}\iint_{[0,T]\times\T_L}|P_1\mathbf{w}|^2+C^{\star}s^3\lambda^4\iint_{[0,T]\times\T_L}\xi^3|\mathbf{w}|^2\nonumber\\
 &\leq C^{\star}\iint_{[0,T]\times\T_L} e^{-2s\vphi}|f|^2+C^{\star}s^3\lambda^4\iint_{[0,T]\times\omega}\xi^3|\mathbf{w}|^2\nonumber\\
 &\ \ \ +C^{\star}s\lambda\iint_{[0,T]\times\T_L}\xi|\nabla\mathbf{w}|^2.\label{I 06/06/2023}
\end{align}
By combining estimates \eqref{I 28/04/2023} and \eqref{I 06/06/2023}, we obtain 
\begin{align}
C_{\star}s^3\lambda^4\iint_{[0,T]\times\T_L}\xi^3&|\mathbf{w}|^2+C_{\star}s\lambda^2\iint_{[0,T]\times\T_L}\xi|\nabla\mathbf{w}|^2+C_{\star}s^2\lambda^3e^{14\lambda}\int_{\T_L}|\mathbf{w}(0)|^2+C_{\star}\int_{\T_L}|\nabla\mathbf{w}(0)|^2\nonumber\\
    &\leq C^{\star}\iint_{[0,T]\times\T_L}e^{-2s\vphi}|f|^2+C^{\star}s^3\lambda^4\iint_{[0,T]\times\omega}\xi^3|\mathbf{w}|^2.\label{I 29/04/2023}
\end{align}
\underline{\textit{Step 4: Observation on $[0,T]\times supp(\chi_0)$.}} Let us remark that the observation is done on $\omega\subset\{\chi_0=1\}\subset supp(\chi_0)$. Thus, we have
$$
\iint_{[0,T]\times\omega}\xi^3|\mathbf{w}|^2\leq \iint_{[0,T]\times\T_L}\chi_{0}\xi^3|\mathbf{w}|^2.
$$
Then, we deduce from \eqref{I 29/04/2023} that
\begin{align}
    s^3\lambda^4\iint_{[0,T]\times\T_L}\xi^3|\mathbf{w}|^2+&s\lambda^2\iint_{[0,T]\times\T_L}\xi|\nabla\mathbf{w}|^2+s^2\lambda^3e^{14\lambda}\int_{\T_L}|\mathbf{w}(0)|^2+\int_{\T_L}|\nabla\mathbf{w}(0)|^2\nonumber\\
    &\leq C^{\star}\left(\iint_{[0,T]\times\T_L} e^{-2s\vphi}|f|^2+s^3\lambda^4\iint_{[0,T]\times\T_L}\chi_0\xi^3|\mathbf{w}|^2\right).\label{A.50 BEG}
\end{align}
\underline{\textit{Step 5: Conclusion}.} It is enough to recover the estimate on $z$ from \eqref{A.50 BEG}. Since $w=\mathbf{w}e^{s\vphi}$, we get
$$
    |w|^2e^{-2s\varphi}=|\mathbf{w}|^2
$$
and 
$$
    |\nabla w|^2e^{-2s\vphi}\leq 2|\nabla \mathbf{w}|^2+2s^2|\nabla\vphi|^2|\mathbf{w}|^2\leq 2|\nabla \mathbf{w}|^2+2C^{\star}s^2\lambda^2\xi^2|\mathbf{w}|^2.
$$
Combining the above estimate and \eqref{A.50 BEG}, we conclude the proof of Lemma \ref{lemme carleman appendice}.
\end{proof}
\bibliographystyle{abbrv}
\bibliography{Bibliographie}
\end{document}